\numberwithin{equation}{section}
\numberwithin{figure}{section}
\patchcmd{\thebibliography}{\chapter*}{\section*}{}{}
\newtheorem{assumption}{Assumption}
\newcommand{\commentout}[1]{{}} 
\newcommand{\abs}[1]{\left|#1\right|}
\newcommand{\bfa}{{\bf a}}
\newcommand{\bfB}{{\bf B}}
\newcommand{\bfb}{{\bf b}}
\newcommand{\bfe}{{\bf e}}
\newcommand{\bff}{{\bf f}}
\newcommand{\bfH}{{\bf H}}
\newcommand{\bfK}{{\bf K}}
\newcommand{\bfn}{{\bf n}}
\newcommand{\bfr}{{\bf r}}
\newcommand{\bft}{{\bf t}}
\newcommand{\bfu}{{\bf u}}
\newcommand{\bfV}{{\bf V}}
\newcommand{\bfv}{{\bf v}}
\newcommand{\bfw}{{\bf w}}
\newcommand{\bfx}{{\bf x}}
\newcommand{\bfX}{{\bf X}}
\newcommand{\bfy}{{\bf y}}
\newcommand{\bfY}{{\bf Y}}
\newcommand{\bfphi}{\boldsymbol{\phi}}
\newcommand{\bfvarphi}{\boldsymbol{\varphi}}
\newcommand{\bfpsi}{\boldsymbol{\psi}}
\newcommand{\bfTheta}{\boldsymbol{\Theta}}
\newcommand{\bfxi}{\boldsymbol{\xi}}
\newcommand{\bfeta}{\boldsymbol{\eta}}
\newcommand{\ddiv}{\operatorname{div}}
\newcommand{\rot}{\operatorname{rot}}
\newcommand{\brot}{\boldsymbol{\operatorname{rot}}}
\newcommand{\curl}{\operatorname{curl}}
\newcommand{\dist}{\operatorname{dist}}
\newcommand{\dd}{\,{\rm d}}
\newcommand{\vertiii}[1]{{\left\vert\kern-0.25ex\left\vert\kern-0.25ex\left\vert #1
    \right\vert\kern-0.25ex\right\vert\kern-0.25ex\right\vert}}
    \newcommand{\vertii}[1]{{\left\vert\kern-0.25ex\left\vert #1
    \right\vert\kern-0.25ex\right\vert}}
\begin{document}
\title{
Anisotropic analysis of VEM for time-harmonic Maxwell equations in inhomogeneous media
with low regularity
}
\author{
Chunyu Chen 
\thanks{School of Mathematics and Computational Science, Xiangtan University, National Center for
Applied Mathematics in Hunan, Hunan Key Laboratory for Computation and Simulation in Science
and Engineering, Xiangtan 411105, China (202131510114@smail.xtu.edu.cn).}
\and Ruchi Guo \thanks{Department of Mathematics, University of California, Irvine, CA 92697 (ruchig@uci.edu)}
\and Huayi Wei \thanks{School of Mathematics and Computational Science, Xiangtan University, National Center for
Applied Mathematics in Hunan, Hunan Key Laboratory for Computation and Simulation in Science
and Engineering, Xiangtan 411105, China (weihuayi@xtu.edu.cn).}
\funding{The first and third authors were supported by Huawei Co. Ltd.(Grant No. TC20220506026) and the National Natural Science Foundation of China (NSFC) (Grant No. 12261131501, 11871413) and the construction of innovative provinces in Hunan Province (Grant No. 2021GK1010).
}}
\date{}
\maketitle
\begin{abstract}
It has been extensively studied in the literature that solving Maxwell equations is very sensitive to the mesh structure, space conformity and solution regularity.
Roughly speaking, for almost all the methods in the literature, optimal convergence for low-regularity solutions heavily relies on conforming spaces and highly-regular simplicial meshes.
This can be a significant limitation for many popular methods based on polytopal meshes in the case of inhomogeneous media, 
as the discontinuity of electromagnetic parameters can lead to quite low regularity of solutions near media interfaces, 
and potentially worsened by geometric singularities, 
making many popular methods based on broken spaces, non-conforming or polytopal meshes particularly challenging to apply. 
In this article, we present a virtual element method for solving an indefinite time-harmonic Maxwell equation in 2D inhomogeneous media with quite arbitrary polytopal meshes,
and the media interface is allowed to have geometric singularity to cause low regularity. 
There are two key novelties: 
(i)  the proposed method is theoretically guaranteed to achieve robust optimal convergence for solutions with merely $\bfH^{\theta}$ regularity, $\theta\in(1/2,1]$; 
(ii) the polytopal element shape can be highly anisotropic and shrinking, and an explicit formula is established to describe the relationship between the shape regularity and solution regularity. 
Extensive numerical experiments will be given to demonstrate the effectiveness of the proposed method.
\end{abstract}

\begin{keywords}
Time-harmonic Maxwell equations,
virtual element methods, 
anisotropic analysis, 
maximum angle conditions, 
polygonal meshes, 
interface problems.
\end{keywords}

\section{Introduction}

The time-harmonic Maxwell equation has numerous applications, such as the design and analysis of various electromagnetic devices, non-destructive detection techniques fields and so on~\cite{2015AmmariChenChenVolkov,2020CHENLIANGZOU}. 
These applications often incorporate multiple media with distinct electromagnetic parameters referred to as inhomogeneous media.
Conventional finite element methods (FEMs) have been extensively studied for this type of equations \cite{2023ChenLiXiang,2014CiarletWuZou,1992Monk,2003GopalakrishnanPasciak,2009ZhongShuWittumXu}, which often require a simplicial mesh that conforms to the interface and meets demanding shape regularity conditions to achieve optimal accuracy. 
However, generating such a mesh for complex media distributions with intricate interface geometry can be highly challenging. 

\subsection{The time-harmonic Maxwell equation and its discretization}

Recall the following 2D time-harmonic Maxwell equation:
\begin{align}
\label{model0}
  \brot \mu^{-1} \rot \bfu - \omega^2(\epsilon + i \sigma/\omega) \bfu = \bff ~~~~ \text{in} ~ \Omega \subseteq \mathbb{R}^2,
\end{align}
with $\bff \in \bfH(\ddiv;\Omega)$ being the source term,
subject to, say, the homogeneous boundary conditions. 
Here, $\mu>0$, $\omega> 0$, $\epsilon \ge 0$ and $\sigma \ge 0$ denote the magnetic permeability, angular frequency, electric permittivity and conductivity, respectively, and $i = \sqrt{-1}$. 
Given $\bfv = [v_1,v_2]^T$ being a 2D field (column vector functions) and $v$ being a scalar function,
let $\rot \bfv = \partial_{x_1} v_2 - \partial_{x_2} v_1$ and $\brot v = [\partial_{x_2}v , -\partial_{x_1}v]^T$ denote the rotational operators.
The motivation for this work stems from the practical applications of electromagnetic waves propagating through inhomogeneous or heterogeneous media, 
where $\mu$, $\epsilon$ and $\sigma$ are discontinuous constants that represent the distinct electromagnetic properties of various media.
We refer readers to Figures \ref{fig:example12}, \ref{fig:interfacedoublecircle} and \ref{fig:twolayerspolymesh} for examples of inhomogeneous media distribution.

Henceforth, we rewrite \eqref{model0} into 
\begin{equation}
\label{model1}
 \brot \alpha \rot \bfu - \beta \bfu = \bff ~~~~ \text{in} ~ \Omega,
\end{equation}
and the corresponding variational form is
\begin{equation}
\label{model2}
a(\bfu,\bfv) := (\rot \bfu, \rot \bfv)_{\Omega} - (\beta \bfu, \bfv)_{\Omega} = (\bff, \bfv), ~~~~ \forall \bfv\in \bfH_0(\rot;\Omega).
\end{equation}
If $\sigma\neq 0$, the bilinear form in \eqref{model2} is coercive on $\bfH(\rot;\Omega)$ \cite{1992Monk,2003GopalakrishnanPasciak} (also see a proof in \cite[Lemma 2.5]{2019BonazzoliDoleanGrahamSpenceTournier}), 
and in such a case the analysis follows from the standard argument of Lax-Milgram lemma. 
Note that the well-posedness of \eqref{model1} implicitly implies the following conditions:
\begin{equation}
\label{interfaceCond}
[\bfu\cdot\bft]_{\Gamma}  = 0 ~~~~ \text{and} ~~~~  [\alpha \rot \bfu]_{\Gamma} = 0,
\end{equation}
and $\bff \in \bfH(\ddiv;\Omega)$ also implies $[\beta \bfu \cdot\bfn]_{\Gamma} = 0$ which, however, will not be explicitly used.
In this work, we restrict ourselves to the case of $\sigma=0$ and $\beta = \omega^2 \epsilon >0$ being not eigenvalues of the relevant system, and thus \eqref{model2} is indefinite posing challenges for either the computation and analysis.

The relaxation of element shape can largely facilitate the mesh generation procedure, 
which has been extensively studied in the past decades. 
For existing works on solving various equations polytopal meshes, 
we refer readers to discontinuous Galerkin (DG) methods \cite{2013AntoniettiGianiHouston,2017CangianiDongGeorgoulisHouston,2014CangianiGeorgoulisHouston,2018ZhaoPark}, 
hybrid DG methods \cite{2020PietroDroniou,2021DuSayas}, 
mimetic finite difference methods \cite{2005BrezziLipnikovShashkov,2005BREZZILIPNIKOVSIMONCINI,2010VeigaManzini}, 
weak Galerkin methods \cite{2012MuWwangYe,2014WangYe}, 
and virtual element methods (VEMs) \cite{2017VeigaCarloAlessandro,2013BeiraodeVeigaBrezziCangiani,2018BrennerSung,2021CaoChenGuoIVEM,2022CaoChenGuo} to be discussed in this work. 
Particularly, it has been studied in \cite{2017ChenQiuShiSolana,2020DuSayasSIAM,2008HermelineLayouniOmnes,2015KretzschmarMoiolaPerugia,2015ChenWang,2004CockburnLiShu} for solving Maxwell equations on general polytopal meshes. 
Since it is not very feasible to construct conforming polynomial spaces on general polytopal meshes, almost all the aforementioned DG-type works typically all employ broken polynomial spaces for approximation.
We also refer readers to \cite{2023FallettaFerrariScuderi,2021MengWangMei,2016PerugiaPietraRusso} for VEMs on solving various time-harmonic equations.

The method in the present work offers numerous advantages in various scenarios, including but not limited to: 
(i) semi-structure meshes that are cut by arbitrary interface from background Cartesian meshes \cite{2021CaoChenGuo,2017ChenWeiWen}, see the left two plots in Figure \ref{fig:NonConfMesh};  
(ii) a non-conforming mesh in which one mesh slids over another one for moving objects, a situation where Mortar FEMs are widely used \cite{2000BuffaMadayRapetti,2010LangeHenrotteHameyer,2008HuShuZou,2001BenBuffaMaday}, 
see right two plots in Figure \ref{fig:NonConfMesh};
(iii) refined anisotropic meshes adaptive to the singularity \cite{2002Nicaise}.
These scenarios frequently occur during electromagnetic phenomenon simulations, 
see \cite{1999ALONSO,2016BonitoGuermondLuddens,2016PatrickMarioBarbara,2018LanteriParedesScheid}.
However, to our best knowledge, the aforementioned works based on semi-structured or polygonal meshes demand two critical conditions: 
(a) the solutions should be sufficiently smooth admitting at least the $H^2$ regularity; and (b) the polygonal element is shape regular in the sense of star convexity and containing no short edges/faces. 

This very issue at hand is precisely attributed to that the broken spaces used in these works inevitably require a stabilization term with an $\mathcal{O}(h^{-1})$ scaling weight to impose tangential continuity. 
As a consequence, the widely-used argument of straightforwardly applying trace inequalities can merely yield the suboptimal convergence rate $h^{\theta-1}$ for solutions with a $\bfH^{\theta}$ regularity \cite{2016CasagrandeWinkelmannHiptmairOstrowski,2016CasagrandeHiptmairOstrowski}. 
It results in the failure of convergence even for a moderate case $\theta=1$. 
Nevertheless, for Maxwell equations involving inhomogeneous and heterogeneous media, the solution regularity is well-known to be quite low, merely $\bfH^{\theta}$, $\theta\in(0,1]$, near the interface between distinct media \cite{2016Ciarlet,2004CostabelDaugeNicaise,2018Alberti,1999MartinMoniqueSerge,2000CostabelDauge}.

\begin{figure}[h]
\centering
\begin{subfigure}{.2\textwidth}
    \includegraphics[width=1in]{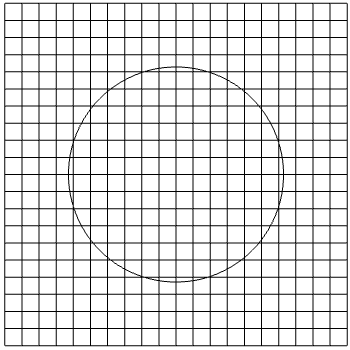}
     \label{fig:UnfitMesh1} 
\end{subfigure}
 \begin{subfigure}{.2\textwidth}
    \includegraphics[width=1.1in]{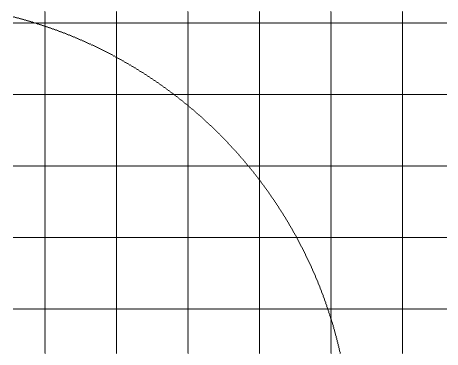}
     \label{fig:UnfitMesh2} 
\end{subfigure}
\begin{subfigure}{.2\textwidth}
    \includegraphics[width=1in]{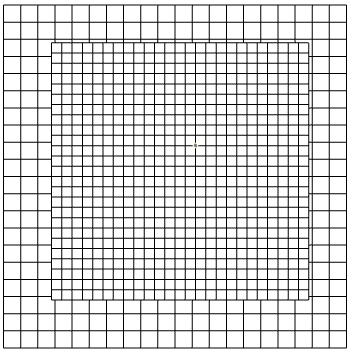}
     \label{fig:NonConfMesh1} 
\end{subfigure}
 \begin{subfigure}{.2\textwidth}
    \includegraphics[width=1in]{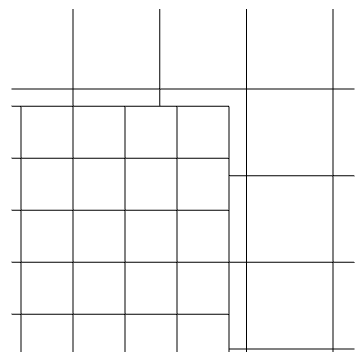}
     \label{fig:NonConfMesh2} 
\end{subfigure}
     \caption{Left two (a semi-structured mesh): a Cartesian mesh cut by a circular interface, and some elements are cut into two polygons for approximating the interface.
     Right two (a non-conforming mesh): a square interface cuts the domain into two subdomains which are discretized by two different meshes, 
     and these two meshes may not be conforming at the interface such that very complicated shaped polygon may appear around the interface.}
  \label{fig:NonConfMesh} 
\end{figure}

In fact, in \cite{2001BenBuffaMaday}, Ben Belgacem, Buffa and Maday established the first analysis for Maxwell equations with Mortar FEMs on non-conforming meshes and explained the aforementioned issue of higher regularity. The regularity was then reduced in \cite{2008HuShuZou,2000ChenDuZou} by imposing certain conforming conditions on meshes. 
Some unfitted mesh methods also employ non-conforming trial function spaces and conforming test functions spaces to achieve optimal convergence order for the $\bfH(\curl)$-type problems, see \cite{2023ChenGuoZou,2020GuoLinZou}.
High-order unfitted mesh methods can be found in \cite{2023ChenLiXiang,2023LiLiuYang,2020LiuZhangZhangZheng} for Maxwell-type problems.
As for DG or penalty type methods, the loss of convergence order has been observed in \cite{2016CasagrandeWinkelmannHiptmairOstrowski,2016CasagrandeHiptmairOstrowski} for the case of low-regularity.
Meanwhile, it is worthwhile to mention that it is possible for DG-type methods to recover optimal convergence but only on simplicial meshes. The key technique is to seek for a $\bfH(\rot)$-conforming subspace of the broken space which may not exist for general polytopal meshes. 
For the works in this direction, we refer readers to \cite{2004HoustonPerugiaSchotzau,2005HoustonPerugiaSchneebeli} for interior penalty DG methods and \cite{2019ChenCuiXu,2020ChenMonkZhang} for HDG methods.
Such a fact renders one of the major features of DG-type methods - their ability to be used on polytopal meshes - no longer applicable to the Maxwell equations.


\begin{figure}[h]
\centering
\includegraphics[width=5in]{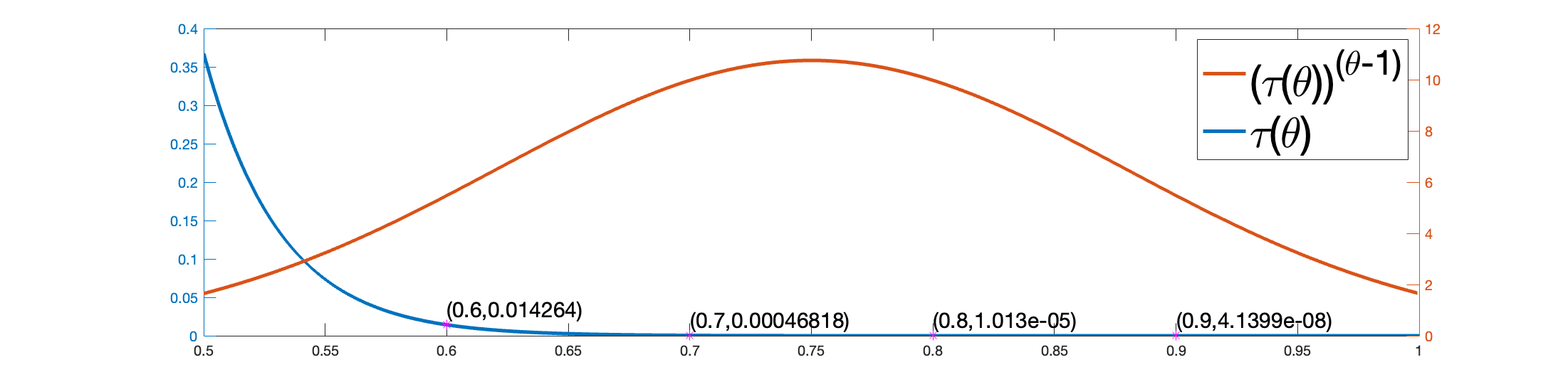}
 \caption{The shape regularity constant $\tau(\theta)$ and the error bound constant $(\tau(\theta))^{(\theta-1)}$ v.s. the solution regularity order $\theta$. 
 Some specific values of $\tau(\theta)$ are shown on the plots for $\theta = 0.6, 0.7, 0.8, 0.9$.
 It can be observed that $\tau(\theta)$ can be quite small for relative larger $\theta$, 
 thereby exhibiting its aptitude for accommodating practical applications.}
\label{fig:tauplot}
\end{figure}


\subsection{The main results of this paper}

The present work aims to demonstrate, through both theoretical analysis and numerical experiments, 
that the VEMs can achieve the optimal convergence rate for time-harmonic Maxwell equations not only on polygonal meshes, but also on highly anisotropic ones. 
There have been a few works on studying VEMs for electromagnetic problems in the literature, 
see \cite{2020BeiroMascotto,2016VeigaBrezziMarini,2020BeiroMascotto} for the construction and analysis of $\bfH(\rot)$-conforming virtual spaces, 
\cite{2017VeigaBrezziDassiMarini} for a magnetostatic model, and \cite{BEIRAODAVEIGA2021} for a time-dependent Maxwell equation.
However, to our best knowledge, due to the low regularity and potentially indefiniteness,
the application and analysis of VEMs to time-harmonic Maxwell equations still remain largely unexplored.

Here, we summarize the contributions of this work below:
\begin{itemize}

\item We present the first analysis of VEMs for the indefinite time-harmonic Maxwell equations with relatively low regularity $\bfH^{\theta}$, $\theta\in (1/2,1]$. 
We extend the classical theories of Helmholtz decomposition and {Hodge} mapping to the virtual spaces and an approximated $L^2$ inner product involving an optimally scaled stabilization. 
We show that the newly developed Hodge mapping can also lead to the optimal approximation accuracy even on highly anisotropic meshes.
We prove that, with the duality argument, these new techniques can lead to the optimal error estimates.

\item 
The central technique used in the present analysis to handle polygonal shapes is significantly different from those in the literature that look for a conforming subspace of the broken space which, 
heavily inevitably demands simplicial meshes. 
Instead, it is based on a ``virtual mesh" technique that yields a conforming virtual space on almost arbitrary polygonal meshes.
Furthermore, the geometric assumption for this work is delicate: 
on one hand, we construct a function $\tau(\theta)\in (0,0.5]$ depending on the regularity order $\theta$, and assume a polygonal element to satisfy
\begin{equation}
\label{assump_polygon0}
\rho_K \ge \tau(\theta) h_K
\end{equation}
where $h_K$ and $\rho_K$ are diameter of $K$ and the radius of the ball to which $K$ is convex, respectively. 
Here, $\tau(\theta)$ has the property that $\lim_{\theta\rightarrow 1/2}\tau(\theta) = \mathcal{O}(1)$ but $\lim_{\theta\rightarrow 1}\tau(\theta) = 0$, 
indicating that $K$ can be arbitrarily shrinking when $\theta \rightarrow 1$. 
On the other hand, for \eqref{assump_polygon0}, 
our novel techniques can show that the error bound merely involves a constant $(\tau(\theta))^{\theta-1}$ which is uniformly bounded for any $\theta\in(1/2,1]$. 
In contrast, the conventional analysis can only lead to $(\tau(\theta))^{-1}$ which blows up as $\tau(\theta) \rightarrow 0$. 
Particularly, we refer readers to Figure \ref{fig:tauplot} for illustration.

\item We conduct extensive numerical experiments, besed on the FEALPy
    package \cite{fealpy}, to demonstrate that the proposed algorithm is capable of effectively handling complex media interfaces that incorporate intricate topology, geometric singularities, and thin layers. 
These situations are typically considered challenging in practice.

\end{itemize}

It is worth noting that the anisotropic analysis in this work essentially recovers the error analysis based on the maximum angle condition \cite{1976BabuskaAziz,1999Duran,2020KobayashiTsuchiya} for the standard simplicial meshes, 
which allow extremely narrow and thin elements. 
Specifically, we refer interested readers to \cite{2005BuffaCostabelDauge,2002Nicaise,2011Lombardi} for the maximum angle condition of edge elements. 
Recently, the maximal angle condition has been also extended to polyhedral elements in \cite{2023Guo}.
In the instances of such irregular elements, the conventional scaling argument based on affine mappings is not even available for the simplicial meshes, let alone polygonal meshes. 
Furthermore, the aforementioned works all require higher regularity, say at least $\bfH^{1+\epsilon}$ for the edge elements. 
Our result substantially alleviates this constraint of regularity while simultaneously unveiling a profound connection between the element geometry and solution regularity.



In the next section, we introduce some basic geometric assumptions and Sobolev spaces. 
In Section \ref{sec:vspace}, we describe the virtual spaces and prepare some of their fundamental properties which will be frequently used in this work. 
In Section \ref{sec:erroreqn}, we prove the optimal error estimates. 
In Section \ref{sec:lemmaproof}, we show the analysis of two technical lemmas as well as the construction of $\tau(\theta)$ in \eqref{assump_polygon0}.
In the last section, we present extensive numerical experiments to demonstrate the effectiveness of the proposed method.



\section{Preliminaries}

Let $\Omega$ be a simply connected domain. 
In this work, we do not assume any smoothness property for the interface. 
Instead, we make the following assumptions:
\begin{assumption}
\label{assum_geo}
The interface satisfies the following conditions
\begin{itemize}
\item[(I1)]\label{asp:I1} it does not intersect itself;
\item[(I2)]\label{asp:I2} the regular (plump) property: there exists a constant $c_0$ such that
\begin{equation}
\label{Dregular}
|B(\bfx,r)\cap \Omega^{\pm}| \ge c_0 r^2, ~~~ \forall \bfx \in \Omega^{\pm}, ~~ r\in[0,1],
\end{equation}
where $B(\bfx,r)$ is a ball centering at $\bfx$ with the radius $r$.
\end{itemize}
\end{assumption}
Note that \hyperref[asp:I1]{(I1)} in Assumption \ref{assum_geo} basically means that a corner of $\Gamma$ and $\partial\Omega$ must be lower bounded and upper bounded from $0$ and $\pi$, respectively.

Let $\mathcal{T}_h$ be a polygonal mesh conforming or fitted to the interface and $\partial\Omega$ in the sense that each element resides on only one side of the interface. 
Nevertheless, thanks to the considerable flexibility in polygonal element shape permitted in this work, 
$\mathcal{T}_h$ can be generated in a ``non-conforming" manner, for example,
(i) semi-structure meshes that are cut by arbitrary interface from background Cartesian meshes \cite{2021CaoChenGuo,2017ChenWeiWen}, 
see the left two plots in Figure \ref{fig:NonConfMesh};  
(ii) non-conforming meshes utilized for distinct objects within a domain, see the right two plots in Figure \ref{fig:NonConfMesh}.

\begin{figure}[h]
\centering
\begin{subfigure}{.2\textwidth}
    \includegraphics[width=1.05in]{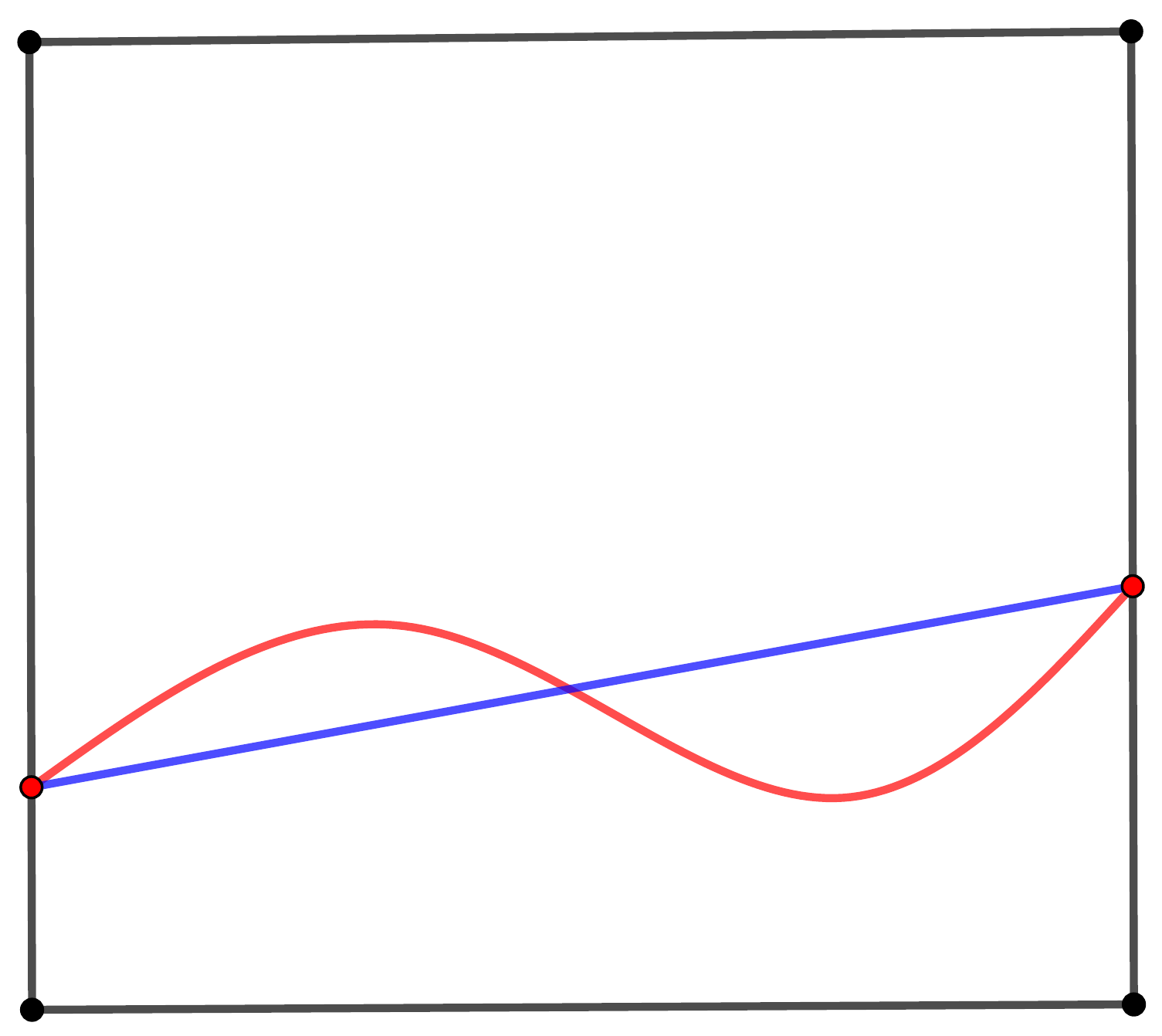}
     \label{fig:interfelem1} 
\end{subfigure}
 \begin{subfigure}{.2\textwidth}
    \includegraphics[width=1.05in]{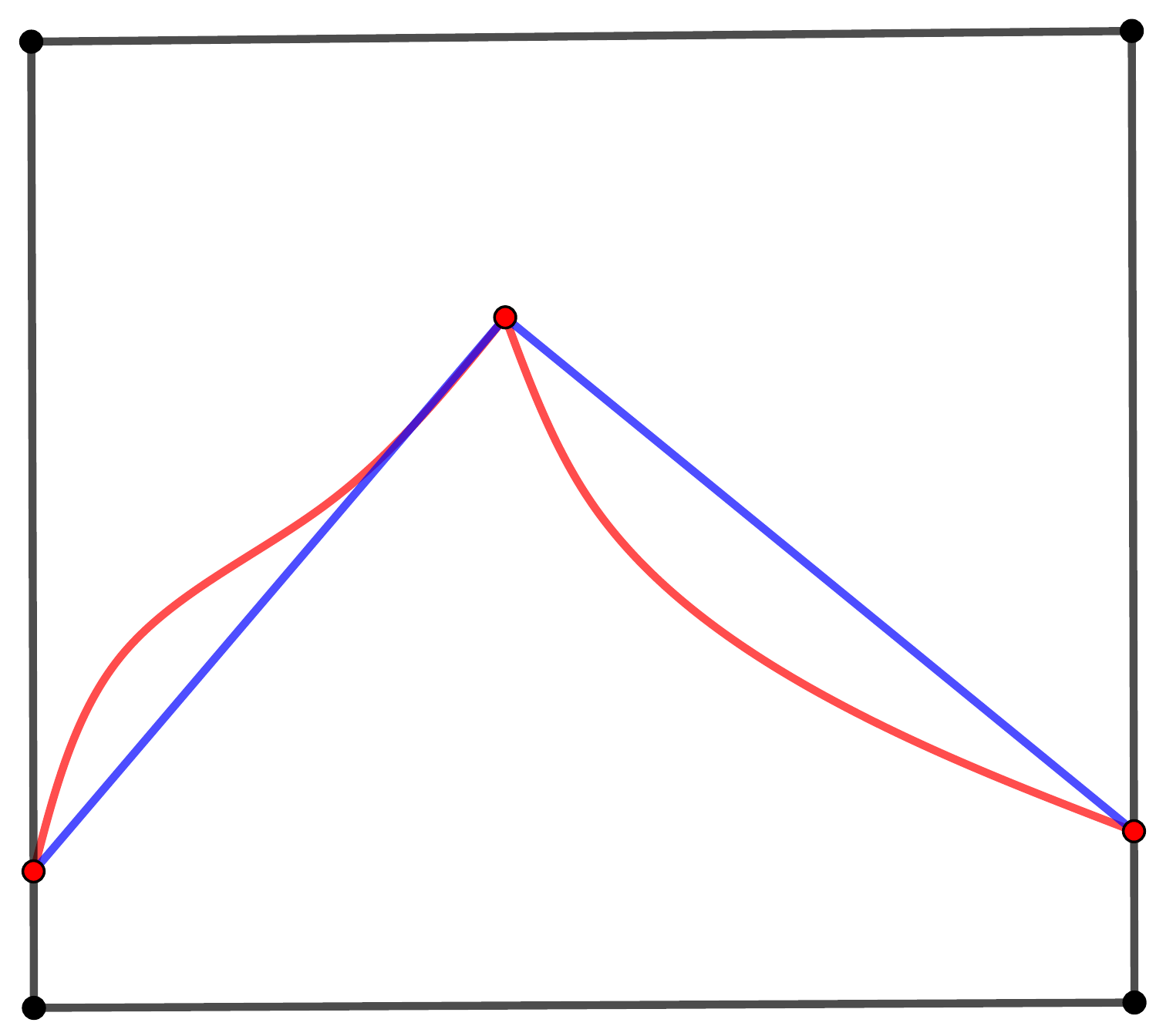}
     \label{fig:interfelem2} 
\end{subfigure}
\begin{subfigure}{.2\textwidth}
    \includegraphics[width=1.05in]{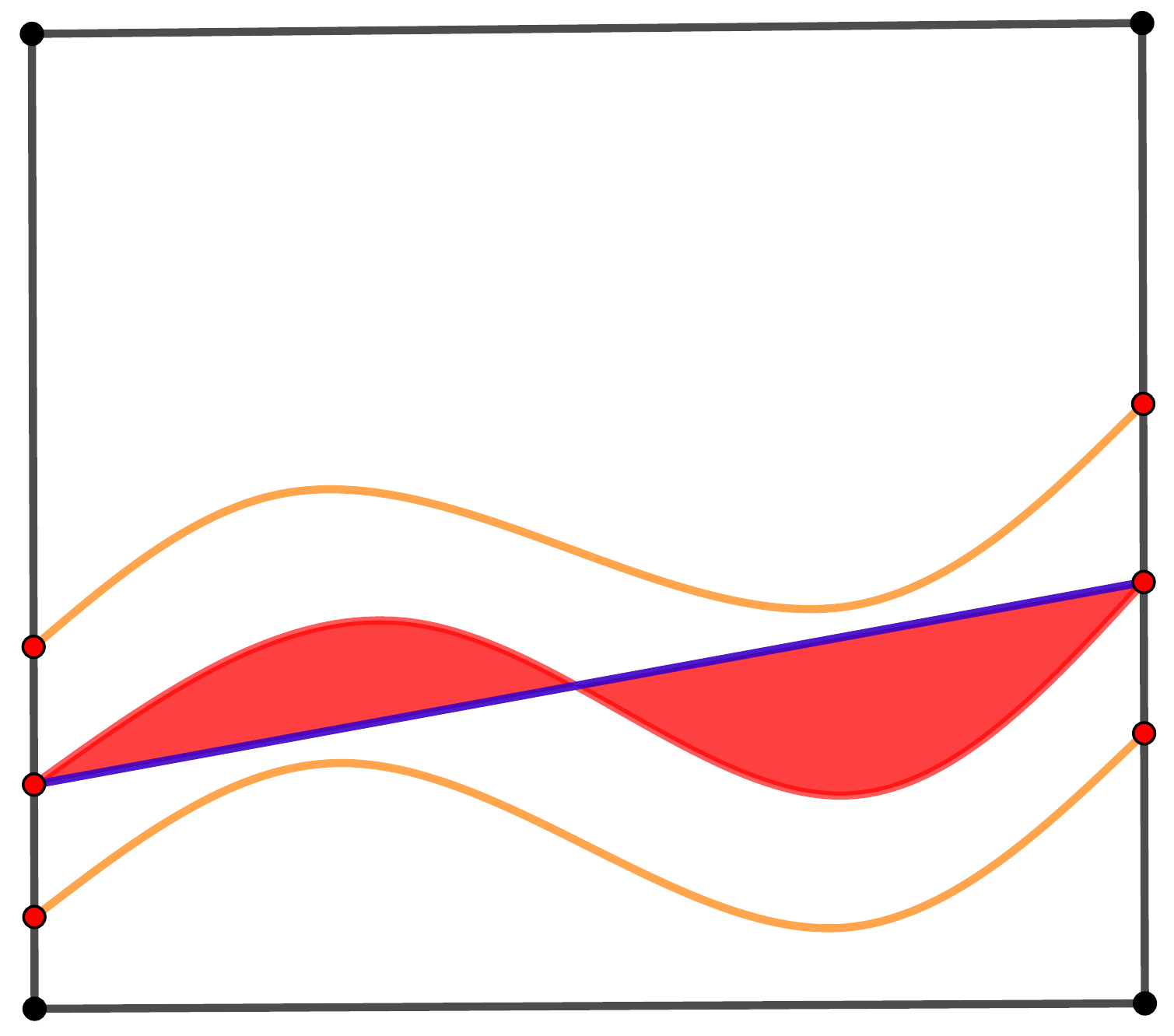}
     \label{fig:interfelem3} 
\end{subfigure}
     \caption{Left: the interface (red curve) cuts an element $K$ into two subelements; construct a segment as $\Gamma^K_h$ (blue segment) by connecting the intersection points. 
     Middle: use a polyline $\Gamma^K_h$ to fit a piecewise smooth interface curve. 
     Right: the mismatched portion by $\Omega^{\pm}$ and $\Omega^{\pm}_h$ which is contained in a thin strip $\Omega_{\epsilon h^2}$.
      }
  \label{fig:interfelem} 
\end{figure}

Let $\Gamma_h$ be the polygonal approximation to the exact interface $\Gamma$ that participates $\Omega$ into $\Omega^{\pm}_h$. 
Let $\alpha_h$ and $\beta_h$ be defined with $\Gamma_h$. 
Note that $\Gamma_h$ is formed by edges of elements in the meshes, see Figure \ref{fig:interfelem} for illustration. 
There is a mismatching region between $\Gamma_h$ and $\Gamma$ formally defined as $(\Omega^+_h\cap \Omega^-)\cup (\Omega^-_h\cap \Omega^+)$ which should be carefully treated in the analysis. 
For this purpose, we introduce a region that is near the interface.
\begin{equation}
\label{OmegaGamma}
\Omega^{\Gamma}_{\epsilon} = \{ \bfx\in \Omega: \dist(\bfx,\Gamma)< \epsilon \},
\end{equation}
and make the following assumption regarding the mismatching portion.
\begin{assumption}
\label{interf_eps}
There exists $\epsilon>0$ independent of the mesh only depending on $\Gamma$ such that
\begin{equation}
\label{interf_eps_eq}
(\Omega^+_h\cap \Omega^-)\cup (\Omega^-_h\cap \Omega^+) \subseteq \Omega^{\Gamma}_{\epsilon h^2}.
\end{equation}
\end{assumption}
See the right plot in Figure \ref{fig:interfelem} for the illustration of this assumption.

Let $H^s(D)$, $\bfH^s(D)$, $\bfH(\rot;D)$ and $\bfH(\ddiv;D)$, $s\ge 0$, be the standard (vector) Sobolev spaces on a region $D$.
We will employ the following fractional order Sobolev norm for $s\in(0,1)$:
\begin{equation}
\label{fracSobolev}
 |v|_{s,D} = \left(\int_D \int_D \frac{ |u(\bfx) - u(\bfy)|^2 }{ |\bfx -
\bfy|^{2(1+s)} } \dd \bfx \dd \bfy\right)^{\frac{1}{2}}, ~~~~ \text{and} ~~~~ \| v \|_{s,D} = \| v \|_{0,D} + |v|_{s,D}.
\end{equation}
In addition, for simplicity of presentation, we shall introduce the following spaces involving possibly variable and discontinuous coefficients:
\begin{subequations}
\label{spaces}
\begin{align}
    & \bfH(\dd^0;\Omega) = \{ \bfu \in \bfH(\dd;\Omega) : \dd \bfu = 0 \} ~~~ \dd = \rot ~ \text{or} ~ \ddiv,  \label{spaces1} \\
    & \bfH(c, \dd  ;\Omega) = \{ \bfu: c \bfu \in \bfH(\dd;\Omega)  \} ~~~ \dd = \rot ~ \text{or} ~ \ddiv \label{spaces2} ,
\end{align}
\end{subequations}
where $c$ is a (piecewise) constant function. The subscript ``0" of these spaces then denotes subspaces with the zero traces. 
Furthermore, the traces involved in \eqref{spaces} on $\partial \Omega$ and $\Gamma$ are all understood in the $L^2$ sense in order for deriving meaningful regularity estimates \cite{1999MartinMoniqueSerge}. 
In addition, we shall frequently use the following $\beta$-div-free subspace of $\bfH_0(\rot;\Omega)$:
\begin{equation}
\label{div_free}
\bfK(\beta;\Omega) = \bfH_0(\rot;\Omega) \cap \bfH(\beta,\ddiv^0;\Omega) \}.
\end{equation}
So $\bfK(\beta;\Omega)$ is $\beta$-weighted $L^2$ orthogonal to the space $\nabla H^1_0(\Omega)$, i.e., in the sense of $(\beta\cdot,\cdot)_{\Omega}$.
Furthermore, we define
\begin{equation}
\label{Yspace}
\bfY^{s}(\Omega) = \bfH^{s}(\Omega^{\pm})\cap \bfH(\beta,\ddiv;\Omega) \cap \bfH(\rot;\Omega).
\end{equation}
In fact, by some appropriate assumptions and choices of $s$, $\bfY^{s}(\Omega)$ can be simply characterized as $\bfH(\beta,\ddiv;\Omega) \cap \bfH(\rot;\Omega)$, see Lemma \ref{lem_embd_1}.


\section{The virtual scheme}
\label{sec:vspace}

In this section, we recall the $H^1$ and $\bfH(\rot)$ virtual spaces and prepare some fundamental estimates which will be frequently used.
Given each domain $D$, we let $\mathbb{P}_k(D)$ be the $k$-th degree polynomial space with $k$ being an integer. 

 \subsection{The virtual element method}
In \cite{2013BeiraodeVeigaBrezziCangiani,2016VeigaBrezziMarini}, the original $H^1$ and $\bfH(\rot)$-conforming virtual spaces are defined as
\begin{subequations}
\label{ive_space}
\begin{align}
  &   \widetilde{V}^n_h(K) = \{ v_h \in H^1(K):  \nabla v_h\in \bfH(\ddiv^0; K) \}, \label{ive_space1} \\
  &   \widetilde{\bfV}^e_h(K) = \{ \bfv_h \in \bfH(\rot;K) \cap \bfH(\ddiv^0;K):  \rot \bfv_h \in \mathbb{P}_0(K) \}. \label{ive_space2}
\end{align}
\end{subequations}
Note that the div-free property is only local. Here, we also refer interested readers to div-free virtual spaces in \cite{2021WeiHuangLi}.

As the local PDE and its stability are intimately intertwined with the element geometry, 
such definition is not suitable for anisotropic analysis. 
Instead, we introduce an equivalent definition that significantly aids in anisotropic analysis.
This novel definition draws extensively from the concept and technique of ``virtual meshes" originally introduced in \cite{2021CaoChenGuo}. 
We recall its definition and the associated assumptions below. 
For each element $K$, we let $\mathcal{N}_K$ and $\mathcal{E}_K$ be the sets of nodes and edges of $K$, respectively.
\begin{assumption}
\label{assump_vmesh}
There exists a local ``virtual mesh" $\mathcal{T}_h(K)$ satisfying
 \begin{itemize}
 \item[(L1)] \label{asp:polygonL1} A maximum angle condition: every angle of all triangles in $\mathcal T_h(K)$ is bounded by $\theta_M< \pi$.
 \item[(L2)] \label{asp:polygonL2} A short-interior-edge condition: every interior edge $e$ either satisfies that $h_e \ge d_1 h_K$ or that there exist $e_l\in\mathcal{E}_K$, $l=1,2,...,L$, connecting the two ending points of $e$ such that $h_e\ge d_2 h_{e_l}$.
 \item[(L3)] \label{asp:polygonL3} A no-interior-node condition: the collection of edges $\mathcal{E}_h(K)$ in $\mathcal{T}_h(K)$ are solely formed by the vertices on $\partial K$.
\end{itemize}
\end{assumption}
\begin{remark}
\label{rem_auxiliary_space}
We make the following remarks for the assumptions above.
\begin{itemize}

\item The local mesh here is referred to as a ``virtual mesh" because it is only used for analysis purposes and is not needed for computation. 
In other words, there is no need to actually form the triangulation when implementing the proposed method.

\item A triangulation satisfying \hyperref[asp:polygonL1]{(L1)}-\hyperref[asp:polygonL3]{(L3)} usually exists for a large class of element shapes, 
including extremely thin and narrow elements. For example, those cases in  Figure \ref{fig:submesh}.
It is worth noting that these shrinking elements pose a significant challenge for many methods in the literature.
\end{itemize}
\end{remark}

\begin{figure}[h]
\centering
\begin{subfigure}{.2\textwidth}
    \includegraphics[width=0.5in]{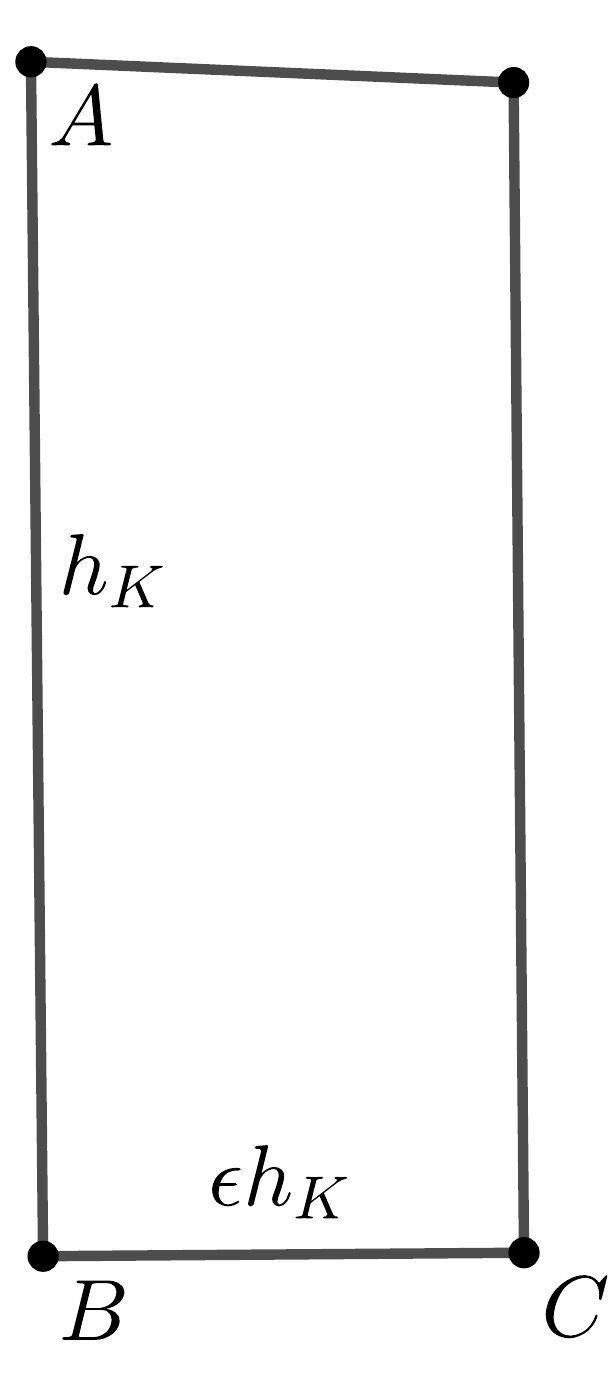}
     \label{fig:submesh5} 
\end{subfigure}
\begin{subfigure}{.2\textwidth}
    \includegraphics[width=1.05in]{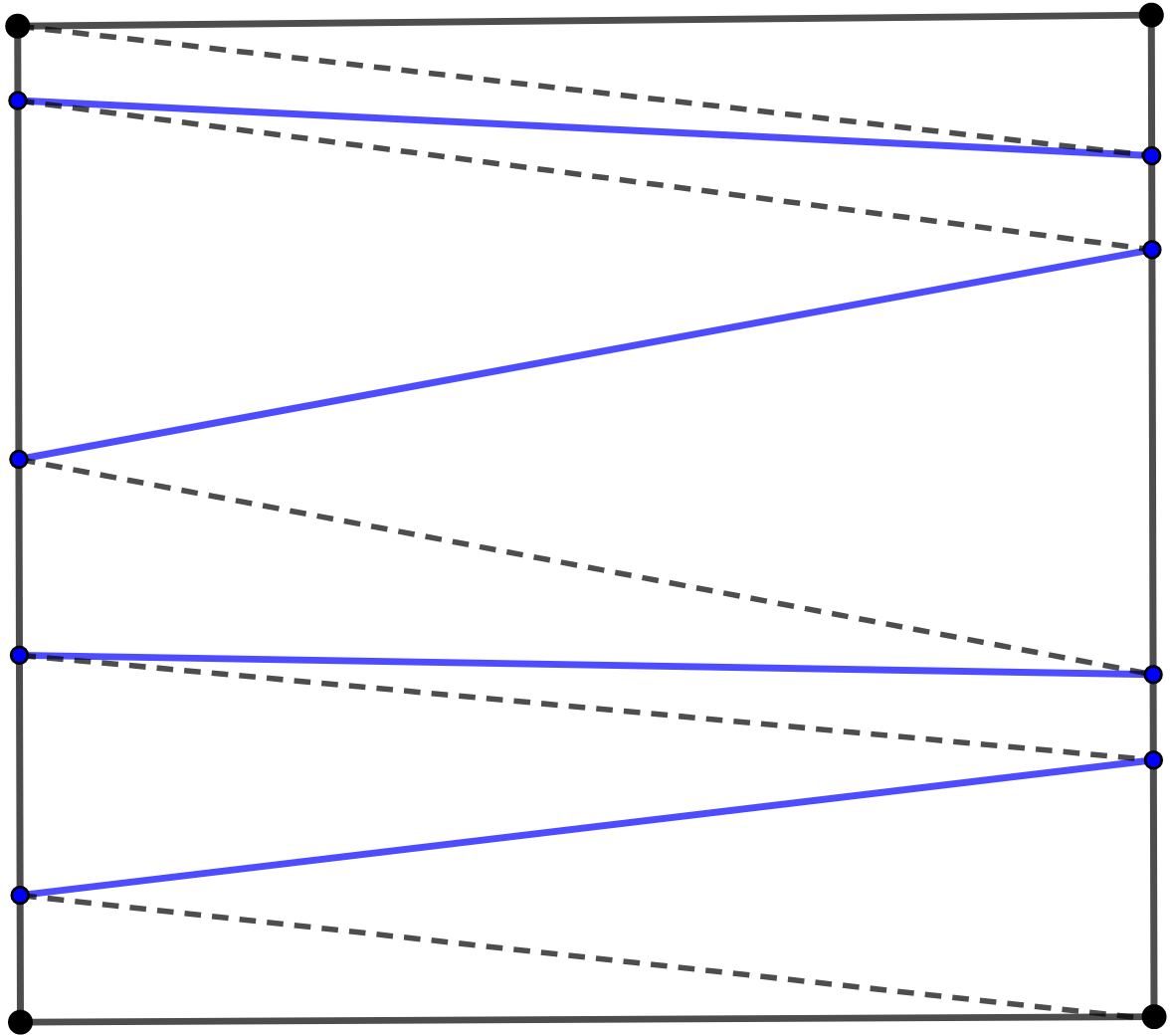}
     \label{fig:submesh8} 
\end{subfigure}
\begin{subfigure}{.2\textwidth}
    \includegraphics[width=1.05in]{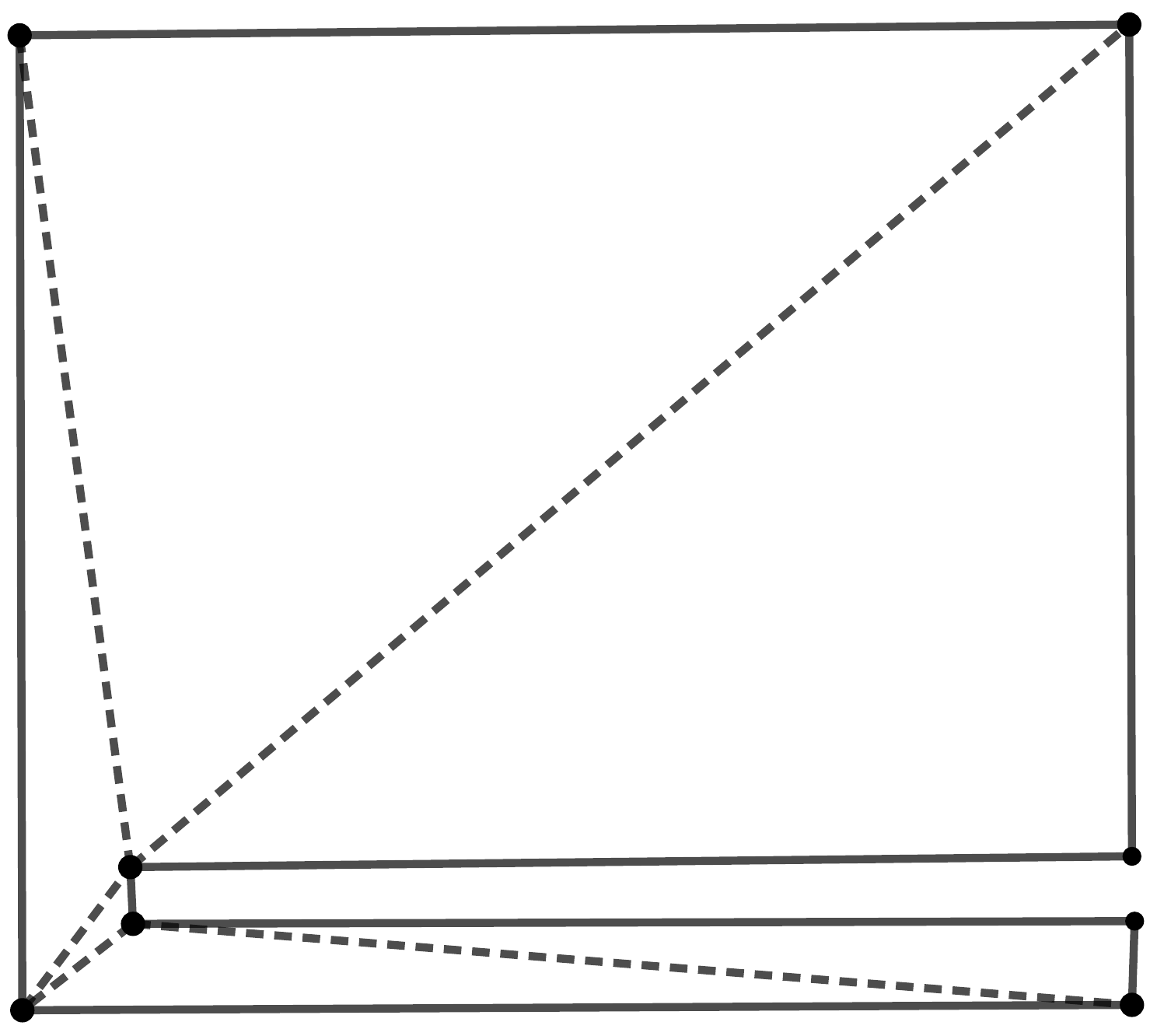}
     \label{fig:submesh4} 
\end{subfigure}
 \begin{subfigure}{.2\textwidth}
    \includegraphics[width=1.05in]{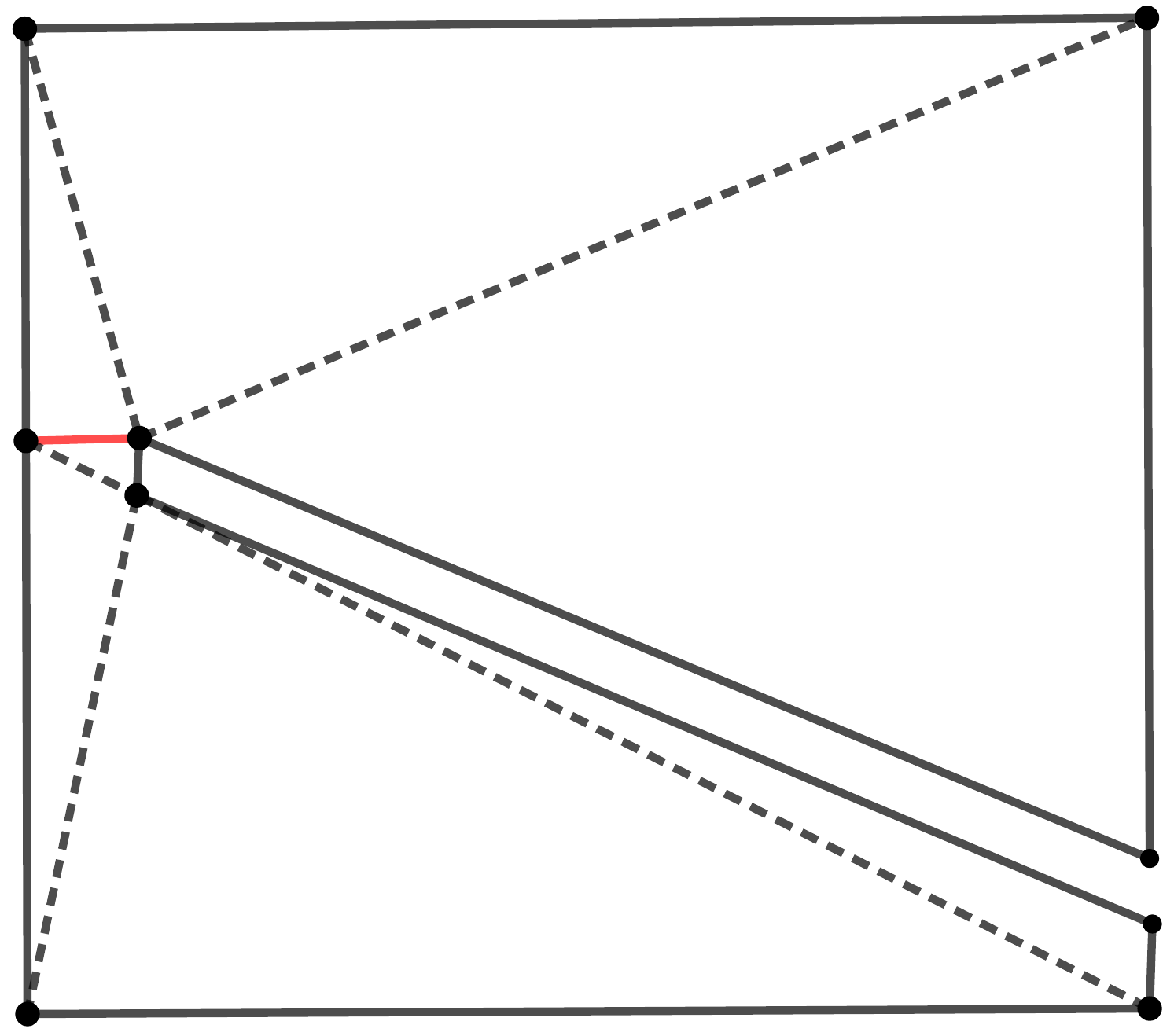}
     \label{fig:submesh3} 
\end{subfigure}
     \caption{The first plot: a thin element where the edge $AB$ has a height $h_K$ and the edge $BC$ has a length $\epsilon h_K$, with $\epsilon\rightarrow 0$; 
     and it satisfies Assumption \hyperref[assump_vmesh]{3} about the local virtual meshes as well as Assumption \hyperref[asp:polygonG2]{(G2)}. 
     Such a shape widely appears when a shape-regular element, say a square for example, is cut by interface (blue segments), as shown in the second plot. 
     The third plot: a square is cut by a thin layer that is very close to the left and bottom sides of the square; 
     and in this case, the local virtual mesh also satisfies Assumption \hyperref[asp:polygonG2]{(G2)}. 
     The fourth plot: a square is cut by a thin layer, and we need to artificially add one edge (the red edge) to separate the element into two such that the virtual mesh also satisfies \hyperref[asp:polygonL1]{(L1)} and \hyperref[asp:polygonL2]{(L2)}.}
  \label{fig:submesh} 
\end{figure}

Provided with the virtual mesh $\mathcal{T}_h(K)$, we define
\begin{subequations}
\label{auxi_v_space_1_polygon}
\begin{align}
V^n_h(K) & = \{ v_h \in H^1(K) : ~ v_h|_T\in\mathbb{P}_1(T), ~ \forall T \in \mathcal{T}_h(K) \},  \label{auxi_v_space_1_polygon1} \\
V^e_h(K) & = \{ \bfv_h \in \bfH(\curl ;K): ~  \bfv_h|_{T} \in \mathcal{ND}_h(T), ~ \forall T\in \mathcal T_h(K), ~  \curl \bfv_h \in \mathbb{P}_0(K)\}.\label{auxi_v_space_1_polygon2}
\end{align}
\end{subequations}
In fact, the nodal space $V^n_h(K)$ is the just the classical Lagrange element spaces defined on the virtual mesh. 
It can be immediately shown that the spaces described above have the same nodal and edge degrees of freedom (DoFs) as the original spaces in \eqref{ive_space}, 
ensuring isomorphism between these two distinct families of spaces. 
As a result, the projection operators and the resulting computation scheme remain completely unchanged.
Furthermore, the edge space in \eqref{auxi_v_space_1_polygon2} can be regarded as the solution space to the following discrete local problem:
for any given boundary conditions $\bfv_h\cdot\bft_e$, $e\in \mathcal{E}_K$, 
find $(\bfv_h,\lambda_h)\in \mathcal{ND}_{h,0}(K)\times V^n_{h,0}(K)$ satisfying
\begin{equation}
\label{harmon_ext_eq_1} 
\left \{
\begin{aligned}
    &  (\rot \bfv_h, \rot \bfw_h )_K + (\bfw_h ,\nabla \lambda_h)_K= 0, ~~~~ \forall \bfw_h \in \mathcal{ND}_{h,0}(K), 
 \\
    &  (\bfv_h,\nabla p_h)_K =0, ~~~~ \forall p_h \in V^n_{h,0}(K),
\end{aligned}
\right.
\end{equation}
where $\mathcal{ND}_{h,0}(\mathcal{T}_h(K))$ is the N\'ed\'elec element space with the zero trace, 
and $V^n_{h,0}(K)$ is the subspace of $V^n_{h}(K)$ also with the zero trace which is trivial as no interior node exists in the virtual mesh. 
We refer interested readers to \cite[Lemma 3.2]{2021CaoChenGuo} for a detailed discussion. 

Thanks to the nodal and edge DoFs, the global spaces can be defined as
\begin{subequations}
\label{glob_space}
\begin{align}
  &   V^n_h = \{ v_h|_K \in V^n_h(K),~ \forall K\in \mathcal{T}_h \} \cap \bfH_0(\rot;\Omega) ,  \\
  &   \bfV^e_h = \{ \bfv_h|_K \in \bfV^e_h(K),~ \forall K\in \mathcal{T}_h \} \cap H^1_0(\Omega) .
\end{align}
\end{subequations}
We also introduce the piecewise constant space $Q_h = \{v_h\in L^2(\Omega): v_h|_K \in \mathbb{P}_0(K) \}$. 
Furthermore, we can define the following interpolations
\begin{subequations}
\label{interp}
\begin{align}
  & I^n_h :  H^2(\beta;\mathcal T_h) \longrightarrow V^n_h, ~~~ I^n_hv_h(\bfx) = v_h(\bfx), ~ \forall \bfx\in \mathcal{N}_h,  \\
  & I^e_h : \bfH^1(\rot,\alpha,\beta;\mathcal T_h) \longrightarrow \bfV^e_h, ~~~ \int_e I^n_h \bfv_h\cdot\bft \dd s = \int_e \bfv_h\cdot\bft \dd s, ~ \forall e\in \mathcal{E}_h.
\end{align}
\end{subequations}
The $L^2$ interpolation can be defined as $\Pi^k_D: L^2(D)\rightarrow \mathbb{P}_k(D)$ will be also frequently used. 
As only the lowest order case is considered in this work, we shall focus on $\Pi^0_D$ and drop ``$0$" for simplicity. 
Define the global interpolation $\Pi_h|_K = \Pi_K$.

Provided with the spaces and operators above, we form the following diagram:
\begin{equation}
\label{DR_curl}
\left.
\begin{array}{ccccccc}
\mathbb R \xrightarrow[]{\quad} &     H^2(\Omega^{\pm})\cap H^1_0(\Omega) & \xrightarrow[]{~~\nabla~~} & \bfH^1(\Omega^{\pm}) \cap \bfH_0(\rot;\Omega) & \xrightarrow[]{~~\rot~~} & L^2(\Omega)  &\xrightarrow[]{\quad} 0  \\
~~~~& \quad \bigg\downarrow I^n_{h}  &  & ~~~~\bigg\downarrow I^e_{h}  &  &~~~~\bigg\downarrow \Pi_h \\
\mathbb R \xrightarrow[]{\quad}& V^n_{h} & \xrightarrow[]{~~\nabla~~} & \bfV^e_{h} & \xrightarrow[]{~~\rot~~} & Q_h  &\xrightarrow[]{\quad} 0.
\end{array}\right.
\end{equation}
\begin{lemma}
\label{lem_exact_seq}
The diagram \eqref{DR_curl} is commutative,
and the sequence on the bottom of \eqref{DR_curl} is exact.
\end{lemma}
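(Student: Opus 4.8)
\emph{Proof proposal.} The plan is to verify the two assertions by elementary arguments that transcribe the classical lowest‑order $H^1$–$\bfH(\rot)$–$L^2$ de Rham complex, relying only on the degrees of freedom of the virtual spaces so that no explicit information about the ``shape'' of a virtual function is ever needed. Commutativity of \eqref{DR_curl} amounts to the pair of identities
\[
\nabla I^n_h v = I^e_h \nabla v, \qquad \rot I^e_h \bfv = \Pi_h \rot \bfv,
\]
for $v$ and $\bfv$ in the respective top‑row spaces, and exactness of the bottom row is the conjunction of: $\operatorname{ker}(\nabla|_{V^n_h})$ coincides with the image of $\mathbb R$; $\operatorname{range}(\nabla|_{V^n_h}) = \operatorname{ker}(\rot|_{\bfV^e_h})$; and $\rot\colon \bfV^e_h\to Q_h$ is surjective.

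For the first identity, fix $v\in H^2(\Omega^\pm)\cap H^1_0(\Omega)$; it is continuous by the Sobolev embedding on each of $\Omega^\pm$ together with the matching of traces on $\Gamma$, so $I^n_h v$ makes sense. Being $C^0$ and piecewise linear on each virtual mesh $\mathcal T_h(K)$, the function $I^n_h v$ has $\nabla I^n_h v$ piecewise constant on $\mathcal T_h(K)$ with $\curl(\nabla I^n_h v)=0\in\mathbb P_0(K)$, whence $\nabla I^n_h v\in\bfV^e_h$. For every edge $e\in\mathcal E_h$ with endpoints $\bfa,\bfb$,
\[
\int_e \nabla(I^n_h v)\cdot\bft\,\dd s = I^n_h v(\bfb)-I^n_h v(\bfa) = v(\bfb)-v(\bfa) = \int_e \nabla v\cdot\bft\,\dd s = \int_e I^e_h(\nabla v)\cdot\bft\,\dd s .
\]
Since a function in $\bfV^e_h$ is uniquely determined by its edge degrees of freedom — the unisolvence shared with the original spaces \eqref{ive_space}, cf.\ the discussion following \eqref{auxi_v_space_1_polygon} — the first identity follows. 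For the second, fix $\bfv$ in the corresponding top‑row space; on each $K$ both $\rot I^e_h\bfv$ and $\Pi_h\rot\bfv$ are constant, and by Green's formula together with the edge‑matching definition of $I^e_h$,
\[
\int_K \rot I^e_h\bfv\,\dd\bfx = \int_{\partial K} I^e_h\bfv\cdot\bft\,\dd s = \sum_{e\in\mathcal E_K}\int_e \bfv\cdot\bft\,\dd s = \int_{\partial K}\bfv\cdot\bft\,\dd s = \int_K \rot\bfv\,\dd\bfx = \int_K \Pi_h\rot\bfv\,\dd\bfx ,
\]
so the two constants agree on every $K$, giving commutativity.

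For exactness, suppose first $v_h\in V^n_h$ with $\nabla v_h=0$; then $v_h$ is constant on each $K$, hence a single global constant by $H^1$‑conformity and connectedness of $\Omega$, and the homogeneous boundary condition forces $v_h=0$, i.e.\ $v_h$ lies in the image of $\mathbb R$. The inclusion $\operatorname{range}(\nabla)\subseteq\operatorname{ker}(\rot)$ is trivial. Conversely, let $\bfv_h\in\bfV^e_h$ with $\rot\bfv_h=0$ in $Q_h$; since $\rot\bfv_h$ is already piecewise constant this means $\rot\bfv_h=0$ in $L^2(\Omega)$, so, $\Omega$ being simply connected, there is $\Phi\in H^1(\Omega)$ with $\nabla\Phi=\bfv_h$. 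Because $\bfv_h|_T$ is constant on every triangle $T\in\mathcal T_h(K)$ (lowest‑order N\'ed\'elec) and, by \hyperref[asp:polygonL3]{(L3)}, $\mathcal T_h(K)$ carries no interior node, $\Phi|_K$ is automatically $C^0$ and piecewise linear on $\mathcal T_h(K)$, i.e.\ $\Phi|_K\in V^n_h(K)$; subtracting the boundary value of $\Phi$ on $\partial\Omega$ (a constant, since $\bfv_h\cdot\bft=0$ there) yields $v_h\in V^n_h$ with $\nabla v_h=\bfv_h$, proving $\operatorname{ker}(\rot)\subseteq\operatorname{range}(\nabla)$. Finally, surjectivity of $\rot\colon\bfV^e_h\to Q_h$ follows from the commuting identity $\rot I^e_h=\Pi_h\rot$ and the surjectivity of the continuous $\rot$ onto the relevant space: given $q_h\in Q_h$, pick $\bfv$ in the top‑row edge space with $\rot\bfv=q_h$ and set $\bfv_h=I^e_h\bfv$, so that $\rot\bfv_h=\Pi_h\rot\bfv=\Pi_h q_h=q_h$ (when homogeneous boundary conditions are imposed one works with the mean‑value‑zero subspace, consistently with the top row).

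The only genuinely delicate point is the compatibility of this recovery with the virtual‑mesh structure: the potential $\Phi$ furnished by simple‑connectedness is a priori only in $H^1(\Omega)$, and one must be sure it has no hidden interior degrees of freedom to reconcile across the triangles of $\mathcal T_h(K)$ — this is exactly what \hyperref[asp:polygonL3]{(L3)} and the per‑triangle N\'ed\'elec structure guarantee, and it is the place where the ``virtual mesh'' reformulation genuinely enters. Everything else is a routine transcription of the classical theory. As a shortcut, since the spaces in \eqref{auxi_v_space_1_polygon} share their nodal and edge degrees of freedom with the original virtual spaces in \eqref{ive_space}, both assertions could instead be imported directly from the known de Rham theory for the latter; we nonetheless prefer the direct verification above because it makes transparent why passing to the virtual mesh preserves the complex.
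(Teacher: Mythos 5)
Your proof is correct, but it takes a more elementary, self‑contained route than the paper, which dispatches the lemma in one line by noting that the virtual‑mesh spaces share their nodal and edge DoFs with the classical spaces in \eqref{ive_space}, hence are isomorphic to them, and importing the commutativity and exactness wholesale from \cite{2016VeigaBrezziMarini}. You in fact anticipate precisely that shortcut in your closing paragraph — it \emph{is} the paper's proof. Your direct verification buys a cleaner picture of \emph{why} the complex survives the ``virtual mesh'' reformulation: the first commutation identity reduces to matching edge DoFs and the fundamental theorem of calculus along edges, the second to Green's formula and the fact that $\rot\bfv_h$ and $\Pi_h\rot\bfv$ are both constants on $K$, and the key exactness step ($\ker\rot\subseteq\operatorname{range}\nabla$) comes from simple‑connectedness of $\Omega$ plus the observation that a locally $\rot$‑free lowest‑order N\'ed\'elec field is locally a gradient of a piecewise linear. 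Your aside about \hyperref[asp:polygonL3]{(L3)} is slightly misdirected at this step — $\Phi|_K$ lands in $V^n_h(K)$ regardless of interior nodes, since $V^n_h(K)$ is the full conforming $\mathbb P_1$ space on $\mathcal T_h(K)$; what \hyperref[asp:polygonL3]{(L3)} really guarantees is that $V^n_h(K)$ is \emph{determined} by the boundary DoFs, which is what makes the isomorphism with \eqref{ive_space1} hold. Finally, your remark that surjectivity of the bottom‑row $\rot$ onto $Q_h$ should be read modulo the mean‑zero constraint (since $\int_\Omega\rot\bfv_h\,\dd\bfx=\int_{\partial\Omega}\bfv_h\cdot\bft\,\dd s=0$ for $\bfv_h\in\bfH_0(\rot;\Omega)$) is a legitimate caveat about the diagram as stated, which the paper glosses over; the identical caveat already applies to the top row, so the commuting‑interpolant argument you give goes through on the mean‑zero subspaces.
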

\begin{proof}
As the spaces here are isomorphic to the classic ones in \eqref{ive_space} through their DoFs, the argument is just identical to \cite{2016VeigaBrezziMarini}.
\end{proof}

Similar to the ``virtual" spirit of virtual spaces, the virtual mesh is treated as ``non-computable".
In computation, following the original VEM, we need to project them to constant vector spaces. 
Indeed, it can be shown that a weighted $L^2$ projection
$\Pi_K:(L^2(K))^2 \rightarrow (\mathbb{P}_0(K))^2$ defined as
\begin{equation}
\label{ife_projection_curl_1}
( \Pi_K \bfu, \bfv_h)_K  = (  \bfu, \bfv_h)_K, ~~~ \forall \bfv_h\in (\mathbb{P}_0(K))^2,
\end{equation}
is computable.
Here, even though the projection here is for vector spaces, we shall keep the same notation $\Pi_K$ for simplicity. 
In addition, $\rot \bfv_h$, $\bfv_h\in \bfV^e_h(K)$, can be explicitly computed through integration by parts and DoFs. 


Then, the VEM is to find $\bfu_h\in \bfV^e_h$ such that
  \begin{equation}
\label{vem0}
a_h(\bfu_h,\bfv_h) = (\bff,\Pi_h \bfv_h)_{\Omega}, ~~~~~ \forall \bfv_h\in \bfV^e_h,
\end{equation}
where
\begin{subequations}
\label{vem1}
\begin{align}
    &   a_h(\bfu_h,\bfv_h) := (\alpha_h \rot\bfu_h, \rot\bfv_h)_{\Omega} - b_h(\bfu_h,\bfv_h),  \label{vem11}   \\
    &   b_h(\bfu_h,\bfv_h) := \sum_{K\in\mathcal{T}_h} b_K(\bfu_h,\bfv_h),  \label{vem12}  \\
    &    b_K(\bfu_h,\bfv_h) := (\beta_h\Pi_K \bfu_h,\Pi_K \bfu_h )_{K} + h ( (I-\Pi_K )\bfu_h\cdot\bft ,  (I-\Pi_K )\bfu_h\cdot\bft )_{\partial K}.  \label{vem13}
\end{align}
\end{subequations}
At last, we shall introduce the following norms for $ \bfv\in \bfH^{s}(\rot;\Omega^{\pm})\oplus \bfV^e_h, s>1/2$:
\begin{equation}
\label{abnorm}
\| \bfv \|^2_b:= b_h(\bfv,\bfv) ~~~~ \text{and} ~~~~\| \bfv \|^2_a := \| \rot\bfv \|^2_{0,\Omega} + \|\bfv \|^2_b.
\end{equation}

\subsection{Basic estimates}

In this subsection, we present several estimates which will be frequently used in the subsequent analysis. 
We first recall the thin-strip argument \cite[Lemma 2.1]{2010LiMelenkWohlmuthZou} which enables us to handle the problematic region $\Omega^{\Gamma}_{\epsilon h^2}$ in Assumption \ref{interf_eps}:
\begin{lemma}
\label{lem_strip}
For $z\in H^{s}(\Omega^-\cup\Omega^+)$, $s>1/2$, there holds
\begin{equation}
\label{lem_strip_eq0}
\| z \|_{0,\Omega^{\Gamma}_{\epsilon h^2}} \lesssim \sqrt{\epsilon} h \| z \|_{s,\Omega^{\pm}}.
\end{equation}
\end{lemma}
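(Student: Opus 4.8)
The plan is to reduce the claim to a one-dimensional statement in the normal direction to $\Gamma$ and then apply a fractional trace/interpolation inequality slice by slice. First I would cover the strip $\Omega^{\Gamma}_{\epsilon h^2}$ by a tubular neighbourhood of $\Gamma$, using Assumption \ref{assum_geo}: since $\Gamma$ does not self-intersect and satisfies the plumpness condition \eqref{Dregular}, for $\epsilon h^2$ small enough there is a bi-Lipschitz change of variables (a local flattening of $\Gamma$) mapping $\Omega^{\Gamma}_{\epsilon h^2}$ into a product region $\Gamma \times (-c\epsilon h^2, c\epsilon h^2)$, with Jacobians bounded above and below by constants depending only on $\Gamma$. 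This reduces matters to controlling $\|z\|_{0}$ on a slab of width $\mathcal{O}(\epsilon h^2)$ in terms of a fractional norm of $z$ on a fixed domain.

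Next, on the slab, I would argue along each normal fibre. For a function $w \in H^{s}$ of one variable $t$ on an interval $I$ containing $0$, the trace at $t=0$ is controlled by $\|w\|_{H^{s}(I)}$ for $s>1/2$, and moreover a scaling argument gives $\|w\|_{L^2((-\delta,\delta))}^2 \lesssim \delta \, |w(0)|^2 + \delta^{2s}|w|_{H^{s}(I)}^2 \lesssim \delta\,\|w\|_{H^{s}(I)}^2$ for $\delta \le 1$ — this is where the exponent $s>1/2$ is essential, as it makes the pointwise trace finite. Taking $\delta = c\epsilon h^2$ and writing $\sqrt{\delta} = \sqrt{c\epsilon}\, h$, integrating the squared inequality over the transversal variable ($y \in \Gamma$) and using Fubini together with the fact that the full $H^s(\Omega^{\pm})$ norm dominates the integral over fibres of the one-dimensional $H^s$ norms (this uses $s\le 1$ so that the Gagliardo seminorm in 2D controls, after the change of variables, the one-dimensional seminorms on a.e. fibre), one obtains $\|z\|_{0,\Omega^{\Gamma}_{\epsilon h^2}} \lesssim \sqrt{\epsilon}\, h \,\|z\|_{s,\Omega^{\pm}}$, with the implied constant depending on $\Gamma$ and $s$ but not on $h$.

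Since Lemma \ref{lem_strip} is attributed in the text to \cite[Lemma 2.1]{2010LiMelenkWohlmuthZou}, the cleanest proof is simply to invoke that reference and note that their hypotheses are met here: the region $\Omega^{\Gamma}_{\epsilon h^2}$ defined in \eqref{OmegaGamma} is exactly the $\epsilon h^2$-neighbourhood of $\Gamma$ to which their thin-strip estimate applies, and $z \in H^s(\Omega^-\cup\Omega^+)$ with $s>1/2$ supplies the needed fractional regularity on each subdomain (the possible jump of $z$ across $\Gamma$ is harmless since the strip estimate is applied on $\Omega^{\Gamma}_{\epsilon h^2}\cap\Omega^{\pm}$ separately and the results added). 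I would state the proof in two sentences along these lines, and relegate the self-contained fibre argument above to a remark or omit it.

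The main obstacle is the geometric reduction: making precise the bi-Lipschitz flattening of a merely Lipschitz (or even rougher, plump) interface $\Gamma$ uniformly, so that the slab width is genuinely $\mathcal{O}(\epsilon h^2)$ and the change-of-variables constants are $h$-independent. If one does not want to rely on \cite{2010LiMelenkWohlmuthZou}, one must check that Assumption \ref{assum_geo}(I2) is enough to guarantee such a neighbourhood has finite overlap and the distance function is comparable to the flattened normal coordinate; this is where the plumpness hypothesis does the real work, and I would cite it explicitly rather than reprove it.
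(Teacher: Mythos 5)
Your recommendation to simply cite \cite[Lemma 2.1]{2010LiMelenkWohlmuthZou} is exactly what the paper does: the lemma is introduced by ``We first recall the thin-strip argument'' and given no further proof, so the proposal matches the paper's approach. Your self-contained fibre sketch (bi-Lipschitz flattening, one-dimensional $H^s$ trace with $s>1/2$ giving the $\sqrt{\delta}$ scaling, Fubini over transversal slices applied on each of $\Omega^\pm$ separately) is a correct outline of the ingredients behind that cited result and adds useful context, but is not part of the paper's argument.
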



Next, we present the following results regarding the positivity of $b_h(\cdot,\cdot)$ and $a_h(\cdot,\cdot)$. 
\begin{lemma}
\label{bn_norm}
$b_h(\cdot,\cdot)$ defines an inner product on $\bfV^e_h$, and consequently $\|\cdot\|_b$ is a norm.
\end{lemma}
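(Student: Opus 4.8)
The plan is to show that $b_h(\cdot,\cdot)$ is symmetric, bilinear, and positive definite on $\bfV^e_h$; symmetry and bilinearity are immediate from the definition \eqref{vem13}, so the crux is positive definiteness, i.e. that $b_h(\bfv_h,\bfv_h)=0$ forces $\bfv_h=0$ for $\bfv_h\in\bfV^e_h$. First I would unwind $b_h(\bfv_h,\bfv_h)=\sum_{K}b_K(\bfv_h,\bfv_h)$, where each local term is a sum of two nonnegative contributions: the weighted $L^2$-norm $(\beta_h\Pi_K\bfv_h,\Pi_K\bfv_h)_K$ of the constant projection, and the scaled boundary term $h\,((I-\Pi_K)\bfv_h\cdot\bft,(I-\Pi_K)\bfv_h\cdot\bft)_{\partial K}$. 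Since $\beta_h>0$ is a (piecewise) positive constant and $h>0$, vanishing of $b_K(\bfv_h,\bfv_h)$ is equivalent to $\Pi_K\bfv_h=0$ and $(I-\Pi_K)\bfv_h\cdot\bft=0$ on $\partial K$ simultaneously, for every $K$. Combining these two, we get $\bfv_h\cdot\bft=\Pi_K\bfv_h\cdot\bft=0$ on $\partial K$, so the tangential component of $\bfv_h$ vanishes on the whole boundary of every element.

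Next I would exploit the structure of the local space $\bfV^e_h(K)$ from \eqref{auxi_v_space_1_polygon2}: $\bfv_h|_T\in\mathcal{ND}_h(T)$ on each triangle $T$ of the virtual mesh $\mathcal T_h(K)$, $\curl\bfv_h\in\mathbb{P}_0(K)$, and $\bfv_h\in\bfH(\curl;K)$ (hence tangential continuity across interior edges of $\mathcal T_h(K)$). The vanishing of $\bfv_h\cdot\bft$ on $\partial K$ means all the edge DoFs of $\bfv_h$ on boundary edges are zero. By the no-interior-node condition \hyperref[asp:polygonL3]{(L3)}, the only DoFs of the discrete space on $K$ are these boundary-edge DoFs (the interior-node space $V^n_{h,0}(K)$ is trivial), so via the characterization \eqref{harmon_ext_eq_1} — the discrete Maxwell harmonic extension with zero boundary data — we conclude $\bfv_h=0$ on $K$. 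Alternatively, and perhaps more cleanly, I would argue directly: $\curl\bfv_h\in\mathbb P_0(K)$ is a constant $c$; integrating over $K$ and using Stokes, $c\,|K|=\int_K\curl\bfv_h=\int_{\partial K}\bfv_h\cdot\bft\dd s=0$, so $\curl\bfv_h=0$ on all of $K$. Then $\bfv_h$ is curl-free on the (simply connected) element $K$, hence $\bfv_h=\nabla\phi$ for some $\phi\in H^1(K)$; since $\bfv_h\in\bfH(\ddiv^0;K)$ locally and is lowest-order Nédélec on each $T$, $\phi$ is piecewise $\mathbb P_1$ and continuous, i.e. $\phi\in V^n_h(K)$, and $\nabla\phi\cdot\bft=0$ on $\partial K$ means $\phi$ is constant on $\partial K$; since $V^n_{h,0}(K)$ is trivial (no interior nodes), $\phi$ is globally constant on $K$, whence $\bfv_h=\nabla\phi=0$.

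Finally, running this over all $K\in\mathcal T_h$ gives $\bfv_h\equiv 0$, which establishes positive definiteness, and therefore $b_h$ is an inner product on $\bfV^e_h$ and $\|\cdot\|_b=b_h(\cdot,\cdot)^{1/2}$ is a norm. The main obstacle, I expect, is the step concluding $\bfv_h=0$ on $K$ from the two vanishing conditions: one must be careful that the boundary term in \eqref{vem13} controls only $(I-\Pi_K)\bfv_h\cdot\bft$ rather than $\bfv_h\cdot\bft$ itself, so it is essential to combine it with $\Pi_K\bfv_h=0$ before invoking the local exactness/uniqueness of the discrete harmonic extension; this is exactly where the no-interior-node condition \hyperref[asp:polygonL3]{(L3)} and the structure of \eqref{harmon_ext_eq_1} (equivalently, the commuting diagram of Lemma \ref{lem_exact_seq} restricted to $K$) do the work. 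Everything else is routine.
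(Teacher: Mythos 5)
Your proposal is correct and follows essentially the same route as the paper: vanishing of $b_K(\bfv_h,\bfv_h)$ forces $\Pi_K\bfv_h=\mathbf 0$ and $(I-\Pi_K)\bfv_h\cdot\bft=0$ on $\partial K$, hence $\bfv_h\cdot\bft=0$ on $\partial K$, and then $\bfv_h=\mathbf 0$ by unisolvence of the edge DoFs. The only difference is that where the paper simply says ``due to the DoFs,'' you spell out the unisolvence argument (Stokes gives $\curl\bfv_h=0$, write $\bfv_h=\nabla\phi$ with $\phi$ piecewise $\mathbb{P}_1$ continuous, and use \hyperref[asp:polygonL3]{(L3)} to force $\phi$ constant), which is a faithful expansion of the same idea rather than a different proof.
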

\begin{proof}
We only need to verify that $\|\bfv_h\|_b=0$ implies $\bfv_h=\mathbf{ 0}$ for $\bfv_h \in \bfV^e_h$. 
In fact, we immediately have $\Pi_K\bfv_h = \mathbf{ 0}$ on each $K$, and thus $\bfv_h\cdot\bft = 0$ on $\partial K$. 
Due to the DoFs, we have $\bfv_h=\mathbf{ 0}$.
\end{proof}

The following two Lemmas are critical for our analysis; 
but due to technicalities arising from low regularity and anisotropic element shape, 
we postpone their proof to Sections \ref{sec:lemmaproof}.
\begin{lemma}
\label{lem_Pi_est}
Let $\bfu\in \bfY^s(\Omega)$, $s\in (1/2,1]$.  
Under Assumptions \ref{interf_eps}, \ref{assump_vmesh} and \ref{assump_polygon}, there holds that
\begin{equation}
\label{lem_Pi_est_eq0}
 \sum_{K\in\mathcal{T}_h} \| \bfu - \Pi_K \bfu \|^2_{0,K} + h_K\| (I - \Pi_K )\bfu\cdot \bft \|^2_{0,\partial K} \lesssim h^{2s} \| \bfu \|^2_{s,\Omega^{\pm}}.
\end{equation}
\end{lemma}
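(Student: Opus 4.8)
The plan is to prove \eqref{lem_Pi_est_eq0} element by element, reducing everything to estimates on a single polygon $K$ and its virtual mesh $\mathcal{T}_h(K)$, and then to sum using the finite-overlap property of the cover $\{\Omega^\pm\}$. Two sources of difficulty have to be separated: (a) the anisotropy of $K$, which forbids the usual affine-scaling/Bramble--Hilbert argument on $K$ itself, and (b) the mismatch between the exact coefficient interface $\Gamma$ and its polygonal approximation $\Gamma_h$, which means $\bfu$ is only piecewise $\bfH^s$ across $\Gamma$, not across $\Gamma_h$. For (b) the tool is Lemma~\ref{lem_strip}: on elements touching the interface, replacing the ``true'' piecewise-smooth reference by the $\Gamma_h$-based one costs only $\|\cdot\|_{0,\Omega^\Gamma_{\epsilon h^2}} \lesssim \sqrt{\epsilon}\,h\|\bfu\|_{s,\Omega^\pm}$, which is higher order than $h^s$ for $s\le 1$ and hence harmless. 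So I would first dispose of the mismatch region this way and assume for the rest that $\bfu$ is genuinely $\bfH^s$ on a neighborhood of each $K$ (or on each of the two pieces of $K$ cut by $\Gamma_h$, treated identically).

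The core is then: for $\bfu\in\bfH^s(K)$, $s\in(1/2,1]$, bound $\|\bfu-\Pi_K\bfu\|_{0,K}$ and $h_K^{1/2}\|(I-\Pi_K)\bfu\cdot\bft\|_{0,\partial K}$ by $h_K^s\|\bfu\|_{s,\omega_K}$ with a constant depending only on $\theta_M$, $d_1$, $d_2$ and $\tau(\theta)$. Since $\Pi_K$ is the (weighted) $L^2$ projection onto constants, $\|\bfu-\Pi_K\bfu\|_{0,K}\le\|\bfu-\bfc\|_{0,K}$ for any constant $\bfc$; picking $\bfc$ the average of $\bfu$ over the ball $B$ of radius $\rho_K$ inside $K$ (this is where \eqref{assump_polygon0}, i.e. $\rho_K\ge\tau(\theta)h_K$, enters) and using the fractional Poincaré inequality on the convex set $K$ — whose constant is controlled by $\operatorname{diam}(K)^s = h_K^s$ after normalization, with the $\rho_K$-to-$h_K$ ratio absorbing the shape dependence — gives the volume term. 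The key point, and the reason the paper's error constant is $(\tau(\theta))^{\theta-1}$ rather than $(\tau(\theta))^{-1}$, is that the fractional ($s<1$) Poincaré/Bramble--Hilbert constant on a convex domain of diameter $h_K$ containing a ball of radius $\rho_K$ scales like $h_K^s (h_K/\rho_K)^{1-s} \sim h_K^s \tau(\theta)^{s-1}$ — sublinearly in the aspect ratio — whereas the integer-order ($s=1$) estimate would scale like $h_K/\rho_K$. I would isolate this as the decisive computation. For the boundary term, I would not use a global trace inequality on $K$ (whose constant degenerates with anisotropy); instead, write $\partial K$ as the union of edges $e$, each an edge of some triangle $T\in\mathcal{T}_h(K)$, apply on $T$ a scaled trace inequality $\|w\|_{0,e}^2 \lesssim h_e^{-1}\|w\|_{0,T}^2 + h_e^{2s-1}|w|_{s,T}^2$ that is \emph{valid on triangles satisfying the maximum-angle condition} \hyperref[asp:polygonL1]{(L1)} (this is the maximum-angle trace estimate; constants depend only on $\theta_M$), take $w=(I-\Pi_K)\bfu\cdot\bft$, and use that $|(I-\Pi_K)\bfu|_{s,T}\le|\bfu|_{s,T}$ plus the already-established volume bound for the $L^2$ part; the short-interior-edge condition \hyperref[asp:polygonL2]{(L2)} is what lets me compare $h_e$ with $h_K$ (or with a chain of boundary edges) so that $\sum_e h_e^{-1}\|\cdot\|_{0,T}^2$ telescopes correctly against $h_K^{2s-1}$.

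The main obstacle is the anisotropic fractional trace/Poincaré estimate on the triangles of $\mathcal{T}_h(K)$ under only the maximum-angle condition: one cannot pull back to a fixed reference triangle by an affine map without blowing up the $H^s$ seminorm constant, so the estimate must be proved ``intrinsically'' — e.g. by a Sobolev extension from $T$ to a comparable ball, or by the explicit Deny--Lions argument with the singular-kernel seminorm, tracking how the constant depends on the shortest height of $T$ versus $h_e$. Establishing exactly this, with the clean exponent $s-1$ on the aspect ratio, is presumably deferred to Section~\ref{sec:lemmaproof} together with the construction of $\tau(\theta)$; once it is in hand, the assembly above is routine, and summing over $K$ with the bounded overlap of the patches $\omega_K\subset\Omega^\pm$ yields \eqref{lem_Pi_est_eq0}.
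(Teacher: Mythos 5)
Your high-level framework (dispose of the $\Gamma/\Gamma_h$ mismatch by the thin-strip estimate, work element by element, sum with the block/finite-overlap condition) matches the paper, but the two core technical steps you propose are not the paper's, and each carries an unproven claim. For the volume term, you invoke a fractional Poincar\'e/Bramble--Hilbert inequality on the anisotropic convex $K$ with constant $h_K^s (h_K/\rho_K)^{1-s}$. That interpolated scaling is not a standard fact, and it does not follow from the paper's Lemma~\ref{lem_proj_approxi}, which (with $|K|\sim\rho_K h_K$) only gives the cruder $h_K^s (h_K/\rho_K)^{1/2}$. More importantly, the paper sidesteps this entirely: since $\Pi_K$ is best $L^2$ approximation by constants on $K$, it replaces $\Pi_K\bfu$ by $\Pi_{\mathcal{B}_K}\mathcal{E}^{\pm}\bfu$ and estimates on the shape-regular ball $\mathcal{B}_K$, yielding $\|\bfu-\Pi_K\bfu\|_{0,K}\lesssim h_K^s|\mathcal{E}^{\pm}\bfu|_{s,\mathcal{B}_K}$ with \emph{no} $\tau$-dependence in the volume part at all. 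Your attribution of the $(\tau(\theta))^{\theta-1}$ factor to this Poincar\'e step is therefore a misidentification.

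For the boundary term you propose a fractional trace inequality on the individual triangles of the virtual mesh $\mathcal{T}_h(K)$ under the maximum-angle condition, combined with (L2) to compare $h_e$ with $h_K$. This is a genuinely different (and unproven) route: establishing a maximum-angle fractional trace inequality with controlled anisotropy exponent is itself a hard sub-problem, and the paper does not use $\mathcal{T}_h(K)$ in the proof of this lemma at all (that mesh only enters Lemma~\ref{lem_L2stab}). Instead the paper splits according to condition \hyperref[asp:polygonG2]{(G2)}: for short edges satisfying \eqref{edge_cond_eq1} it compares the two constant projections directly via $|K|^{-1}$; for edges satisfying \eqref{edge_cond_eq2} it embeds $K$ in the auxiliary square $\mathcal{S}_K$, applies a shape-regular trace inequality on $\mathcal{S}_K$, inserts the intermediate line projection $\Pi_{b_2}$, and runs an explicit four-fold integral (averaging in $b_2$ and invoking Lemma~\ref{lem_geometry_patch}) to obtain the term $h_K^2 l_K^{2s-2}$, which is where $(\tau(\theta))^{\theta-1}$ actually originates, via $l_K\ge\rho_K\ge\tau(\theta)h_K$. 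In short: the shape-regular auxiliary regions $\mathcal{B}_K,\mathcal{S}_K$ plus the direct integral computation replace both the anisotropic Poincar\'e and the anisotropic trace inequalities you would otherwise need, and are what make the argument close without gaps.
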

\begin{lemma}
\label{lem_interp_est}
Let $\bfu\in \bfY^s(\Omega)$, $s\in (1/2,1]$. 
Under Assumptions \ref{interf_eps}, \ref{assump_vmesh} and \ref{assump_polygon}, there hold that
\begin{subequations}
\label{lem_interp_est_eq0}
\begin{align}
    & \| \bfu - I^e_h \bfu \|_{0,\Omega}  \lesssim h^{s}  \| \bfu \|_{s,\Omega^{\pm}} + h\|  \rot \bfu \|_{0,\Omega} ,  \label{lem_interp_est_eq01}\\
    & \| \bfu - I^e_h \bfu \|_{b} \lesssim h^{s}  \| \bfu \|_{s,\Omega^{\pm}} + h\| \rot \bfu \|_{0,\Omega}.   \label{lem_interp_est_eq02}
\end{align}
If there additionally holds $\alpha\rot\bfu\in \bfH^{s}(\Omega)$, then
\begin{align}
  & \| \rot( \bfu - I^e_h \bfu ) \|_{0,\Omega} \lesssim h^{s}  \| \rot \bfu \|_{s,\Omega^{\pm}} .   \label{lem_interp_est_eq03}
\end{align}
\end{subequations}
\end{lemma}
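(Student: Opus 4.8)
The statement to prove is Lemma~\ref{lem_interp_est}, the interpolation error estimates for $I^e_h$ on anisotropic polygonal meshes with low-regularity data. Here is how I would approach it.

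\textbf{Overall strategy.} The plan is to reduce the polygonal estimate to a \emph{local} estimate on each element $K$, and then, on $K$, exploit the virtual-mesh structure from Assumption~\ref{assump_vmesh}: since $\bfV^e_h(K)$ restricts to N\'ed\'elec elements on the triangulation $\mathcal T_h(K)$, and $I^e_h$ is the edge interpolation, one wants to invoke a maximum-angle-type anisotropic interpolation estimate for lowest-order N\'ed\'elec elements on each triangle $T \in \mathcal T_h(K)$. Because the raw data $\bfu$ only has $\bfH^s$ regularity with $s\in(1/2,1]$ — just barely enough for edge traces to make sense — one cannot interpolate $\bfu$ directly triangle-by-triangle; instead I would first apply a stable $\bfH^s$-regularization or quasi-interpolation (a Scott--Zhang / Cl\'ement-type operator, or better, use the commuting diagram \eqref{DR_curl}) to pass to something smooth enough, estimate that, and control the remainder by approximation properties. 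Concretely, I would split $\bfu - I^e_h\bfu = (\bfu - \bfw) + (\bfw - I^e_h\bfw) + I^e_h(\bfw - \bfu)$ where $\bfw$ is a suitable smoother surrogate, or — cleaner — use the Hodge-type decomposition $\bfu = \nabla \phi + \bfz$ with $\bfz$ enjoying extra regularity and $\phi \in H^1$, so that $\rot\bfu = \rot\bfz$ controls one piece and $\nabla\phi$ commutes with interpolation via $I^e_h\nabla\phi = \nabla I^n_h\phi$ (Lemma~\ref{lem_exact_seq}).

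\textbf{Key steps, in order.} (1) Localize: write $\|\bfu - I^e_h\bfu\|_{0,\Omega}^2 = \sum_K \|\bfu - I^e_h\bfu\|_{0,K}^2$ and similarly for $\|\cdot\|_b$ via \eqref{vem13}, reducing everything to per-element bounds. (2) On each $K$, use the commuting property to handle the gradient part: decompose the local field into a gradient of an $H^1$ (in fact piecewise-$H^2$ across $\Gamma$) scalar plus a $\rot$-controlled part, reducing the edge-interpolation estimate for the gradient part to a \emph{nodal} interpolation estimate for $I^n_h$ on the virtual mesh — which is classical Lagrange interpolation under the maximum angle condition (L1). (3) For the remaining part, apply the anisotropic N\'ed\'elec interpolation error estimate on each triangle $T$ of $\mathcal T_h(K)$ under (L1); here the short-interior-edge condition (L2) and the no-interior-node condition (L3) are what keep the constants uniform and let me sum the triangle-wise bounds back up to $K$ without losing powers of $h$. (4) Deal with the interface mismatch: the coefficient-dependent spaces $\bfY^s(\Omega)$ in \eqref{Yspace} and the fact that $\Gamma_h \ne \Gamma$ force me to absorb the thin strip $\Omega^\Gamma_{\epsilon h^2}$ using Lemma~\ref{lem_strip} (the $\sqrt\epsilon h\|z\|_{s,\Omega^\pm}$ bound), which is exactly where Assumption~\ref{interf_eps} enters. (5) For \eqref{lem_interp_est_eq03}: since $\rot I^e_h\bfu = \Pi_h \rot \bfu$ by the commuting diagram, this reduces to the $L^2$-projection estimate $\|\rot\bfu - \Pi_h\rot\bfu\|_{0,\Omega}$, which under the extra hypothesis $\alpha\rot\bfu \in \bfH^s(\Omega)$ is a standard piecewise polynomial approximation estimate (plus one more thin-strip correction for $\alpha$ vs $\alpha_h$), giving the clean $h^s\|\rot\bfu\|_{s,\Omega^\pm}$.

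\textbf{Main obstacle.} The hard part is Step~(3) combined with the fractional, low regularity: establishing a \emph{robust} anisotropic interpolation estimate for lowest-order N\'ed\'elec elements valid for $\bfH^s$ data with $s\in(1/2,1]$ on triangles satisfying only a maximum angle condition — the classical maximum-angle analyses for edge elements (e.g. the references cited around \eqref{assump_polygon0}) all demand $\bfH^{1+\epsilon}$ regularity, precisely because the edge degrees of freedom $\int_e \bfv\cdot\bft$ are not bounded on $\bfH^s$ for $s\le 1$ on a fixed shape-regular triangle, let alone a degenerating one. Circumventing this is the crux: I expect one must avoid ever evaluating edge DoFs on the raw $\bfu$, and instead route everything through the Hodge/Helmholtz decomposition so that the only genuinely anisotropic interpolation applied to low-regularity data is the \emph{nodal} one (which does tolerate $H^s$, $s>1/2$, since then $H^s \hookrightarrow$ continuous traces on edges in 1D is still delicate but manageable via the fractional trace inequality), while the edge interpolation is only ever applied to the $\rot$-lifted, higher-regularity component. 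Making that decomposition work \emph{element-locally} with constants depending on the shape only through $\tau(\theta)$ — and tracking how the power $h^s$ (rather than $h^{s}\tau(\theta)^{s-1}$ blowing up) emerges — is the technically delicate engine, and it is deferred to Section~\ref{sec:lemmaproof} for exactly that reason.
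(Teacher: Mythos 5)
Your proposal takes a genuinely different route from the paper, but it rests on a misconception that makes it more complicated than it needs to be. You assert that the edge DoFs $\int_e \bfv\cdot\bft$ are ``not bounded on $\bfH^s$ for $s\le 1$,'' and you therefore want to route everything through an element-local Helmholtz decomposition $\bfu=\nabla\phi+\bfz$ so that the raw $\bfu$ is never hit by $I^e_h$. But for $s\in(1/2,1]$ the fractional trace theorem does give $\bfH^s(K)\hookrightarrow\bfH^{s-1/2}(\partial K)\subset L^2(\partial K)$, so the edge DoFs \emph{are} bounded, and the paper exploits this directly. The actual proof observes that, for the lowest-order edge space, the tangential trace of $I^e_h\bfu$ on each edge $e$ is exactly the 1D $L^2$-projection $\Pi_e(\bfu\cdot\bft)$, so $\|(\bfu-I^e_h\bfu)\cdot\bft\|_{0,e}$ is a one-dimensional projection error; this is then controlled via the extension $\mathcal{E}^\pm$ to the block $\mathcal{B}_K$ and a trace inequality on a shape-regular auxiliary domain. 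For the $L^2$ bound \eqref{lem_interp_est_eq01}, the paper inserts $\Pi_K\bfu$ and uses the Poincar\'e-type inequality \eqref{lem_Poincare_eq0} for functions in $\bfV^e_h(K)$ — whose constant depends only on $\theta_M$, $d_1$, $d_2$ from Assumption~\ref{assump_vmesh}, not on local elliptic regularity — to reduce the volume term $\|\Pi_K\bfu-I^e_h\bfu\|_{0,K}$ to $\rot$ and tangential boundary contributions, plus Lemma~\ref{lem_Pi_est}. For \eqref{lem_interp_est_eq02}, the extra difficulty is the term $\|\Pi_K\bfxi_h\cdot\bft\|_{0,\partial K}$, which is treated by the two cases in condition (G2): case \eqref{edge_cond_eq1} by a direct scaling, and case \eqref{edge_cond_eq2} by constructing the auxiliary linear function $p^e_h$ with $\brot p^e_h=\bft_e$ and integrating by parts.

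The gap in your proposal is concrete: the element-local Helmholtz/Hodge decomposition you invoke requires local elliptic regularity on $K$ (to get $\phi\in H^2(K)$ for nodal interpolation, since $I^n_h$ needs point values), and that regularity constant is \emph{not} uniform in the anisotropy parameter $\tau(\theta)$ — which is precisely the quantity you are trying to control. You acknowledge this as ``the technically delicate engine,'' but the paper never does that decomposition at the element level; it sidesteps the issue entirely via the 1D projection identity and the virtual-mesh Poincar\'e inequality. You would therefore need to supply a new, uniform-in-shape local regularity lemma that the paper does not prove and that may well be false for shrinking elements; without it, your proof does not close. Your treatment of \eqref{lem_interp_est_eq03} via the commuting diagram $\rot I^e_h\bfu=\Pi_h\rot\bfu$ plus the projection estimate and thin-strip correction is, by contrast, exactly the paper's argument.
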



One critical component in the analysis of VEMs is the boundedness of the mesh-dependent norms, 
which can be described as the continuity of the identity operators between the related spaces.

\begin{lemma}
\label{lem_L2stab}
Under Assumption \ref{assump_vmesh}, the following identity operators are continuous
\begin{subequations}
\label{lem_L2stab_eq0}
\begin{align}
    & (\bfV^e_h, \| \cdot \|_{a}) \xrightarrow[]{~~I~~}  (\bfV^e_h,\|\cdot\|_{0,\Omega}), \label{lem_L2stab_eq01} \\
    &  ( \bfY^{s}(\Omega) ,\|\cdot\|_{s,\Omega^{\pm}} ) \xrightarrow[]{~~I~~}  (\bfY^s(\Omega),\|\cdot\|_{b}), ~~~ s\in(1/2,1]. \label{lem_L2stab_eq02}
\end{align}
\end{subequations}
\end{lemma}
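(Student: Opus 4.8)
\textbf{Proof plan for Lemma \ref{lem_L2stab}.}

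The plan is to establish both continuity statements by reducing them to the already-announced approximation estimates (Lemmas \ref{lem_Pi_est} and \ref{lem_interp_est}) together with elementary manipulations of the stabilization form $b_K$. For \eqref{lem_L2stab_eq01}, the key observation is that on each element $K$ the $L^2$-norm splits as $\|\bfv_h\|_{0,K}^2 \le 2\|\Pi_K\bfv_h\|_{0,K}^2 + 2\|(I-\Pi_K)\bfv_h\|_{0,K}^2$. The first term is controlled directly by $b_K$ (up to the coefficient $\beta_h$ being bounded below). For the second term, I would invoke a local Poincar\'e-type inequality on the virtual mesh $\mathcal T_h(K)$: since $(I-\Pi_K)\bfv_h$ has vanishing mean on $K$, one has $\|(I-\Pi_K)\bfv_h\|_{0,K} \lesssim h_K\|\rot\bfv_h\|_{0,K} + $ (a boundary term involving $(I-\Pi_K)\bfv_h\cdot\bft$ on $\partial K$). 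The boundary term is exactly $h_K^{1/2}$ times a factor appearing in $\|\cdot\|_b$, so after summing over $K$ one gets $\|\bfv_h\|_{0,\Omega}^2 \lesssim \|\rot\bfv_h\|_{0,\Omega}^2 + \|\bfv_h\|_b^2 = \|\bfv_h\|_a^2$. The maximum angle condition \hyperref[asp:polygonL1]{(L1)} and the short-edge condition \hyperref[asp:polygonL2]{(L2)} are precisely what make the Poincar\'e constant on $\mathcal T_h(K)$ uniform in the anisotropy; this is the place where Assumption \ref{assump_vmesh} enters.

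For \eqref{lem_L2stab_eq02}, given $\bfu\in\bfY^s(\Omega)$ I would write $\|\bfu\|_b \le \|\bfu - I^e_h\bfu\|_b + \|I^e_h\bfu\|_b$. The first piece is bounded by $h^s\|\bfu\|_{s,\Omega^\pm} + h\|\rot\bfu\|_{0,\Omega}$ via \eqref{lem_interp_est_eq02}, which is dominated by $\|\bfu\|_{s,\Omega^\pm}$ since $\rot\bfu$ is part of the $\bfY^s$-norm through $\bfH(\rot;\Omega)$ (and $h\lesssim 1$). For the second piece, $I^e_h\bfu\in\bfV^e_h$, so it suffices to bound $\|\bfv_h\|_b$ for $\bfv_h\in\bfV^e_h$ in terms of data we control: directly from the definition \eqref{vem13}, $\|\bfv_h\|_b^2 = \sum_K (\beta_h\Pi_K\bfv_h,\Pi_K\bfv_h)_K + h\|(I-\Pi_K)\bfv_h\cdot\bft\|_{0,\partial K}^2$, and both terms are estimated by comparing $\bfv_h = I^e_h\bfu$ back to $\bfu$ using $\Pi_K$-stability and a trace/scaling argument on the virtual mesh, giving $\|I^e_h\bfu\|_b \lesssim \|\bfu\|_{0,\Omega} + (\text{interpolation error}) \lesssim \|\bfu\|_{s,\Omega^\pm}$. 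Adding the two pieces yields the claim.

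The main obstacle is the uniformity of the constants under anisotropy and low regularity: the local Poincar\'e inequality and the local trace inequality on $\mathcal T_h(K)$ must hold with constants independent of how thin or shrinking the triangles of the virtual mesh are, which forces careful use of \hyperref[asp:polygonL1]{(L1)}--\hyperref[asp:polygonL3]{(L3)} rather than the usual shape-regular scaling. A secondary subtlety is that $\bfu$ has only $\bfH^s$ regularity on $\Omega^\pm$ separately, so the trace terms on $\partial K$ for elements abutting the interface $\Gamma_h$ must be handled through the fractional trace inequality and the mismatch control of Assumption \ref{interf_eps} (via Lemma \ref{lem_strip}); these estimates are in fact what Lemmas \ref{lem_Pi_est} and \ref{lem_interp_est} already package, so I would lean on them as black boxes and keep the present proof to the structural splitting described above.
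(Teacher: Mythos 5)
Your treatment of \eqref{lem_L2stab_eq01} coincides with the paper's: split $\bfv_h = \Pi_K\bfv_h + (I-\Pi_K)\bfv_h$, control $\|\Pi_K\bfv_h\|_{0,K}$ by the first piece of $b_K$, and control $\|(I-\Pi_K)\bfv_h\|_{0,K}$ by the anisotropic Poincar\'e inequality on the virtual mesh (the paper's inequality \eqref{lem_Poincare_eq0}, whose constant is uniform under \hyperref[asp:polygonL1]{(L1)}--\hyperref[asp:polygonL2]{(L2)}), yielding a boundary term already present in $\|\cdot\|_b$ and a $\rot$ term present in $\|\cdot\|_a$. This is exactly the paper's argument.

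For \eqref{lem_L2stab_eq02}, however, you take a genuine detour that the paper does not. The paper simply expands $\|\bfv\|_b^2$ by definition:
\begin{equation*}
\| \bfv \|^2_b \simeq \| \Pi_h \bfv \|^2_{0,\Omega}  +  \sum_{K\in\mathcal{T}_h} h_K \| (I-\Pi_K )\bfv\cdot\bft \|^2_{0,\partial K},
\end{equation*}
bounds the first sum by $\|\bfv\|_{0,\Omega}^2$ using only the $L^2$-contraction property of the orthogonal projection $\Pi_h$, and bounds the second sum by $h^{2s}\|\bfv\|_{s,\Omega^\pm}^2$ directly by Lemma \ref{lem_Pi_est}. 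No interpolant is needed. Your splitting $\|\bfu\|_b \le \|\bfu - I^e_h\bfu\|_b + \|I^e_h\bfu\|_b$ forces you to estimate $\|I^e_h\bfu\|_b$, and the natural triangle inequality $\|I^e_h\bfu\|_b \le \|I^e_h\bfu-\bfu\|_b + \|\bfu\|_b$ is circular, since $\|\bfu\|_b$ is the target. To escape this you would have to expand $\|I^e_h\bfu\|_b$ by definition anyway and estimate term by term — at which point you have effectively reproduced the paper's direct argument, except applied to $I^e_h\bfu$ rather than $\bfu$, which is strictly harder (e.g., $\Pi_K I^e_h\bfu$ and $\Pi_K\bfu$ are not equal in general, so one still needs to compare them). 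So your route is recoverable but less economical: it invokes the heavier Lemma \ref{lem_interp_est} where only the $L^2$-stability of $\Pi_h$ and Lemma \ref{lem_Pi_est} are needed, and it leaves a circularity to be untangled that the paper's one-line expansion avoids entirely. I would recommend you drop the interpolant step and work with $\|\bfu\|_b$ directly.
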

\begin{proof}
For \eqref{lem_L2stab_eq01}, following the argument of \cite[Lemma 3.4]{2021CaoChenGuo}, we can show
\begin{equation}
\label{lem_Poincare_eq0}
\| \bfv_h \|_{0,K} \le  \left( \frac{ \cot(\theta_{M}) C_K  }{2\min\{d_1,d_2\}} \right)^{1/2} (  h^{1/2}_{K} \|  \bfv_h\cdot\bft \|_{0,\partial P} + h_{K} \| \rot\, \bfv_h \|_{0,K}),
\end{equation}
where  $C_K$ is an integer only depending on the number of vertices of $K$, and $d_1$ and $d_2$ come from  \hyperref[asp:polygonL2]{(L2)} in Assumption \ref{assump_vmesh}. 
Applying \eqref{lem_Poincare_eq0} to $\bfv_h - \Pi_K \bfv_h$ and using the boundedness of $\Pi_K$ yields \eqref{lem_L2stab_eq01}.

Next, for \eqref{lem_L2stab_eq02}, given $\bfv\in \bfY^s(\Omega)$, using Lemma \ref{lem_Pi_est} and the boundedness property of the projection, we have
\begin{equation}
\label{lem_L2stab_eq2}
\| \bfv \|^2_b = \| \Pi_h \bfv \|^2_{0,\Omega}  +  \sum_{K\in\mathcal{T}_h} h_K \| (I-\Pi_K )\bfv\cdot\bft \|^2_{0,\partial K} \lesssim \| \bfv \|^2_{0,\Omega} + h^{2s} \| \bfv \|^2_{s,\Omega^{\pm}} .
\end{equation}
\end{proof}


\begin{remark}
It should be noted that while $b_h(\cdot,\cdot)$ serves as an approximation to the $L^2$ inner product, 
Lemma \ref{bn_norm} solely ensures its positive definiteness without providing a uniformly lower bound that is uniformly robust to variations in element shape. 
To achieve such a desirable robust bound, the inclusion of the $\rot$ term is essential, as demonstrated in \eqref{lem_L2stab_eq01}.
In addition, bounding $\|\cdot\|_b$ by the $L^2$ norm lacks robustness to the element shape. 
Instead, we need to enhance regularity to bound it by the $\|\cdot\|_{s,\Omega^{\pm}}$, as illustrated by \eqref{lem_L2stab_eq02}.
\end{remark}



\subsection{Decomposition and embedding}
\label{sec:decomp}
In this subsection, we broaden the scope of some well-established results of regular decomposition to accommodate discontinuous coefficients and the inner products associated with the VEMs introduced above.
For the sake of completeness, we start with the following $H^1$-type elliptic equation:
\begin{equation}
\label{H1eqn}
\ddiv(\beta\nabla u) = f ~~ \text{in} ~~ \Omega, ~~~~ [u]_{\Gamma} =0, ~~~~ [\beta\nabla u\cdot\bfn]_{\Gamma}=g, ~~ \text{on} ~~ \Gamma, ~~~~ u=0 ~~ \text{on} ~~ \partial\Omega,
\end{equation}
with $f\in L^2(\Omega)$ and $g\in H^{1/2}(\Omega)$. 
Under condition \hyperref[asp:I1]{(I1)} in Assumption \ref{assum_geo}, Theorem 16 in \cite{2016Ciarlet} indicates that $u\in H^{1+\theta}_0(\Omega^{\pm})$ with $\theta\in(1/2,1]$.
If the interface intersects itself, the regularity may be even lower, i.e., $\theta\in(0,1)$, see the discussions in \cite{1974Kellogg,Nicaise1994,2001Petzoldt,2016Ciarlet}.
Hence, the following embedding result holds.

\begin{lemma}
\label{lem_embd_1}
Under \hyperref[asp:I1]{(I1)} in Assumption \ref{assum_geo}, there exists $\theta\in(1/2,1]$ such that 
\begin{equation}
\label{lem_embd_1_eq0}
\bfH_0(\rot;\Omega)\cap \bfH(\ddiv,\beta;\Omega) \hookrightarrow \bfH^{\theta}(\Omega^{\pm}).
\end{equation}
\end{lemma}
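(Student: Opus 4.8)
The plan is to reduce the claim to the scalar elliptic regularity result already invoked above, namely that solutions of \eqref{H1eqn} under \hyperref[asp:I1]{(I1)} enjoy $H^{1+\theta}$ regularity on $\Omega^{\pm}$ for some $\theta\in(1/2,1]$. The bridge between the vector space $\bfH_0(\rot;\Omega)\cap \bfH(\ddiv,\beta;\Omega)$ and scalar potentials is a Helmholtz-type decomposition adapted to the piecewise-constant coefficient $\beta$. First I would fix $\bfu\in\bfH_0(\rot;\Omega)\cap\bfH(\ddiv,\beta;\Omega)$ and split it as $\bfu = \nabla p + \bfu_0$, where $p\in H^1_0(\Omega)$ solves $\ddiv(\beta\nabla p) = \ddiv(\beta\bfu)$ with appropriate interface flux data, and $\bfu_0 := \bfu - \nabla p \in \bfK(\beta;\Omega)$ is the $\beta$-weighted divergence-free remainder (this is exactly the orthogonal splitting $\bfH_0(\rot;\Omega) = \nabla H^1_0(\Omega)\oplus\bfK(\beta;\Omega)$ mentioned after \eqref{div_free}).

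For the gradient part, since $\ddiv(\beta\bfu)\in L^2(\Omega)$ and the jump $[\beta\bfu\cdot\bfn]_{\Gamma}$ lives in $H^{1/2}$ (or can be handled with the trace regularity built into the definition of $\bfH(\ddiv,\beta;\Omega)$), $p$ is precisely a solution of \eqref{H1eqn}, so Theorem 16 of \cite{2016Ciarlet} gives $p\in H^{1+\theta}(\Omega^{\pm})$ and hence $\nabla p\in\bfH^{\theta}(\Omega^{\pm})$ with the corresponding norm bound. For the divergence-free part $\bfu_0\in\bfK(\beta;\Omega)$, I would use the fact that $\Omega$ is simply connected and $\beta\bfu_0$ is divergence-free with zero tangential trace of $\bfu_0$ on $\partial\Omega$: introduce a stream function, i.e. write $\beta\bfu_0 = \brot\phi$ for some $\phi\in H^1(\Omega)$ (determined up to a constant, fixed by a boundary condition coming from $\bfu_0\cdot\bft=0$ on $\partial\Omega$, which translates into $\phi$ being constant on $\partial\Omega$). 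Then $\rot\bfu_0 = \rot(\beta^{-1}\brot\phi) = -\ddiv(\beta^{-1}\nabla\phi)\in L^2(\Omega)$, so $\phi$ solves an elliptic transmission problem of the same type \eqref{H1eqn} with coefficient $\beta^{-1}$ — a piecewise constant that still satisfies the hypotheses — whence $\phi\in H^{1+\theta'}(\Omega^{\pm})$ for some $\theta'\in(1/2,1]$ by the same Ciarlet result. Consequently $\bfu_0 = \beta^{-1}\brot\phi\in\bfH^{\theta'}(\Omega^{\pm})$, with the subtlety that although $\phi$ gains a full order of smoothness, multiplying by the piecewise-constant $\beta^{-1}$ keeps $\bfu_0$ only piecewise-$\bfH^{\theta'}$, which is exactly what the target space $\bfH^{\theta}(\Omega^{\pm})$ asks for. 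Taking $\theta := \min\{\theta,\theta'\}$ and summing the two estimates yields the embedding.

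The main obstacle I anticipate is the careful bookkeeping of the interface flux data: verifying that the jump terms generated by the decomposition genuinely have the $H^{1/2}$-regularity (or the weaker $L^2$-trace regularity actually used in the paper, per the remark after \eqref{spaces}) required to invoke the scalar regularity theorem, and that the boundary conditions on $\partial\Omega$ are of the homogeneous Dirichlet type appearing in \eqref{H1eqn}. A secondary technical point is justifying the stream-function representation $\beta\bfu_0 = \brot\phi$ with the correct boundary behaviour on a general simply connected $\Omega$ whose boundary may itself have corners — here one uses that $\ddiv(\beta\bfu_0)=0$ together with simple connectedness, and that $\bfu_0\cdot\bft=0$ on $\partial\Omega$ forces $\phi|_{\partial\Omega}$ to be locally constant, hence globally constant since $\partial\Omega$ is connected. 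Everything else is a routine combination of the two scalar regularity statements and the continuity of the Helmholtz projections.
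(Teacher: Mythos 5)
Your proposal takes essentially the same route as the paper (and, in more detail, the paper's Appendix A, where the complex-$\beta$ case is worked out): decompose $\bfu$ into a gradient part and a stream-function part, reduce each to a scalar transmission problem of the type \eqref{H1eqn}, and invoke Ciarlet's piecewise-$H^{1+\theta}$ regularity under \hyperref[asp:I1]{(I1)}. The one-line body of the lemma's proof in the paper simply cites \cite[Theorem 7]{2016Ciarlet} for real $\beta$; the Appendix carries out exactly the construction you sketch.

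There is, however, a genuine error in the stream-function boundary condition. Writing $\beta\bfu_0 = \brot\phi$ with $\brot\phi = [\partial_{x_2}\phi,\,-\partial_{x_1}\phi]^\top$ and $\bft = [-n_2, n_1]^\top$, one computes
\[
\brot\phi\cdot\bft = -\nabla\phi\cdot\bfn, \qquad \brot\phi\cdot\bfn = \nabla\phi\cdot\bft .
\]
Since $\brot\phi$ is a $90^\circ$ rotation of $\nabla\phi$, its \emph{tangential} component on $\partial\Omega$ is the \emph{normal} derivative of $\phi$. Consequently $\bfu_0\cdot\bft = -\beta^{-1}\partial_n\phi$, and the membership $\bfu_0\in\bfH_0(\rot;\Omega)$ forces the \emph{Neumann} condition $\partial_n\phi = 0$ on $\partial\Omega$, not the Dirichlet condition $\phi = \text{const}$. (It is $\bfu_0\cdot\bfn = 0$ that would imply $\phi$ locally constant on $\partial\Omega$, but there is no such normal-trace constraint here.) The paper's Appendix A indeed prescribes $\nabla\bfphi\cdot\bfn = 0$ on $\partial\Omega$. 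This matters because you invoke \eqref{H1eqn} verbatim, which is a homogeneous \emph{Dirichlet} problem; for the stream-function component you need the Neumann analogue of the scalar regularity statement, which still holds under \hyperref[asp:I1]{(I1)} (both boundary conditions are covered in \cite{2016Ciarlet}), but cannot be quoted as \eqref{H1eqn} as written. Aside from this mismatch — and the small bookkeeping that $[\beta\bfu\cdot\bfn]_\Gamma = 0$ because $\beta\bfu\in\bfH(\ddiv;\Omega)$, so the flux datum $g$ in \eqref{H1eqn} is zero for the potential $p$ — the argument is sound and aligns with the paper's.
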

\begin{proof}
With the elliptic regularity for the elliptic equation \eqref{H1eqn}, 
the result for the real $\beta$ can be found in \cite[Theorem 7]{2016Ciarlet}. 
Here for the sake of completeness, we also show the case of complex $\beta$ in Appendix \ref{append_embed}, 
despite only the real case being used in this work.
\end{proof}

With Lemma \ref{lem_embd_1}, we immediately have 
\begin{equation}
\label{lem_embd_2}
\bfK(\beta;\Omega)  \hookrightarrow \bfH^{\theta}(\Omega^{\pm}),
\end{equation}
with $\theta$ specified in Lemma \ref{lem_embd_1}. Next, we introduce the \textit{Hodge} mapping
\begin{equation}
\label{hodge_map}
\mathcal{H}_{\beta}: \bfV^e_h \rightarrow \bfK(\beta;\Omega) ,~~~~\text{satisfying} ~~ \rot \mathcal{H}_{\beta}\bfw_h = \rot \bfw_h.
\end{equation}
Note that \eqref{hodge_map} is well-defined as $\beta$ is positive. By \eqref{lem_embd_2}, there holds $\mathcal{H}_{\beta}\bfV^e_h \subset \bfH^{\theta}(\Omega^{\pm})$. 

Similar to the classic N\'ed\'elec spaces, the $\bfH(\rot)$-conforming virtual spaces do not have the global $\ddiv$-free property.
We need to consider a discrete-div-free condition in the sense of the inner product $b_h(\cdot,\cdot)$ (not the conventional standard $L^2$ inner product $(\cdot,\cdot)_{\Omega}$). 
It results in a special discrete Helmholtz decomposition for an arbitrary $\bfv_h\in \bfV^e_h$:
\begin{equation}
\label{Hdeomp1}
\begin{split}
&\bfv_h = \bfw_h + \nabla q_h, ~~~~ \bfw_h\in \bfV^e_h, ~~ q_h \in V^n_h, \\
\text{satisfying} ~~~& b_h( \bfw_h , \nabla s_h) = 0, ~~ \forall s_h \in V^n_h.
\end{split}
\end{equation}
It follows from \eqref{lem_L2stab_eq01} that
\begin{equation}
\label{bh_l2_bound}
\| \nabla v_h \|^2_{0,\Omega} \lesssim b_h(\nabla v_h,\nabla v_h), ~~~ \forall v_h \in V^n_h,
\end{equation}
which immediately indicates that the decomposition in \eqref{Hdeomp1} is well-defined.
\begin{remark}
\label{rem_decomposition}
As $b_h(\cdot,\cdot)\approx (\cdot,\cdot)_{\Omega}$, replacing the standard $L^2$ inner product $(\cdot,\cdot)_{\Omega}$ by $b_h(\cdot,\cdot)$ is reasonable, 
which also greatly facilitates the analysis below. 
In addition, \eqref{Hdeomp1} can be rewritten as given $\bfv_h\in \bfV^e_h$ finding $q_h\in V^n_h$ such that
\begin{equation}
\label{rem_decomposition_eq1}
b_h(\nabla q_h, \nabla s_h) = b_h(\bfv_h, \nabla s_h), ~~~ \forall s_h \in V^n_h,
\end{equation}
which is basically the VEM for the $H^1$ equation with the discontinuous coefficients. 
Even though the definition of virtual spaces has been modified, as the projection remains unchanged, 
the VEM here is exactly the one in the literature \cite{2013BeiraodeVeigaBrezziCangiani,2014VeigaBrezziMariniRusso,2018CaoChen}.
Therefore, the analysis of the present work can be extended to the $H^1$ equation easily, which is actually simpler due to the coercivity.
$\Box$
\end{remark}

We define the discrete div-free subspace of $\bfV^e_h$:
\begin{equation}
\label{dis_discrete}
\bfK_h(\beta_h;\Omega) = \{ \bfw_h\in \bfV^e_h~:~ b_h( \bfw_h, \nabla s_h) = 0, ~~ \forall s_h \in V^n_h \}. 
\end{equation}
The following lemma shows that the approximation relationship between $\bfK_h(\beta_h;\Omega)$ and $\bfK(\beta;\Omega)$ can be inherited.
\begin{lemma}
\label{lem_Hodge_est}
Under \hyperref[asp:I1]{(I1)} in Assumption \ref{assum_geo} and the mesh Assumption \ref{interf_eps} or \ref{assump_vmesh}.
Given any $\bfw_h \in \bfK_h(\beta_h;\Omega)$, $\mathcal{H}_{\beta}\bfw_h\in \bfK(\beta;\Omega)$ satisfies 
\begin{subequations}
\label{lem_Hodge_est_eq0}
\begin{align}
   & \|  \mathcal{H}_{\beta}\bfw_h  \|_{\theta,\Omega^{\pm}} \lesssim \| \rot \bfw_h \|_{0,\Omega}, \label{lem_Hodge_est_eq01}  \\
    &  \| \mathcal{H}_{\beta}\bfw_h - \bfw_h \|_{b} \lesssim h^{\theta} \| \rot \bfw_h \|_{0,\Omega}.  \label{lem_Hodge_est_eq02}
\end{align}
\end{subequations}
\end{lemma}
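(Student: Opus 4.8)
The plan is to prove the two estimates in Lemma~\ref{lem_Hodge_est} by combining the elliptic/embedding regularity of the Hodge map with the interpolation estimates of Lemma~\ref{lem_interp_est} and the stability results of Lemma~\ref{lem_L2stab}. For \eqref{lem_Hodge_est_eq01}, observe that $\mathcal{H}_{\beta}\bfw_h \in \bfK(\beta;\Omega) = \bfH_0(\rot;\Omega)\cap \bfH(\beta,\ddiv^0;\Omega)$ by definition, so Lemma~\ref{lem_embd_1} (in the form \eqref{lem_embd_2}) gives $\mathcal{H}_{\beta}\bfw_h\in\bfH^{\theta}(\Omega^{\pm})$ with the embedding bound $\|\mathcal{H}_{\beta}\bfw_h\|_{\theta,\Omega^{\pm}} \lesssim \|\mathcal{H}_{\beta}\bfw_h\|_{\bfH(\rot;\Omega)}$. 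Since $\mathcal{H}_{\beta}\bfw_h$ is $\beta$-div-free, a standard Poincaré--Friedrichs inequality for $\bfK(\beta;\Omega)$ (which is valid because $\Omega$ is simply connected) bounds $\|\mathcal{H}_{\beta}\bfw_h\|_{0,\Omega}\lesssim \|\rot\mathcal{H}_{\beta}\bfw_h\|_{0,\Omega} = \|\rot\bfw_h\|_{0,\Omega}$, where the last equality is the defining property \eqref{hodge_map}. This yields \eqref{lem_Hodge_est_eq01}.

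For the approximation estimate \eqref{lem_Hodge_est_eq02}, the idea is to pass through the interpolant $I^e_h(\mathcal{H}_{\beta}\bfw_h)$. First I would note that $\mathcal{H}_{\beta}\bfw_h$ and $\bfw_h$ have the same $\rot$, so $\rot(\mathcal{H}_{\beta}\bfw_h - \bfw_h) = 0$; hence $I^e_h(\mathcal{H}_{\beta}\bfw_h) - \bfw_h$ lies in the kernel of $\rot$ restricted to $\bfV^e_h$, which by the exactness of the bottom row of \eqref{DR_curl} (Lemma~\ref{lem_exact_seq}) equals $\nabla V^n_h$. Thus $I^e_h(\mathcal{H}_{\beta}\bfw_h) - \bfw_h = \nabla \phi_h$ for some $\phi_h\in V^n_h$. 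Then write
\begin{equation*}
\| \mathcal{H}_{\beta}\bfw_h - \bfw_h \|_{b} \le \| \mathcal{H}_{\beta}\bfw_h - I^e_h\mathcal{H}_{\beta}\bfw_h \|_{b} + \| \nabla\phi_h \|_{b}.
\end{equation*}
The first term is controlled by Lemma~\ref{lem_interp_est}, specifically \eqref{lem_interp_est_eq02} applied to $\bfu = \mathcal{H}_{\beta}\bfw_h$ with $s=\theta$: since $\rot\mathcal{H}_{\beta}\bfw_h = \rot\bfw_h$, this gives $\lesssim h^{\theta}\|\mathcal{H}_{\beta}\bfw_h\|_{\theta,\Omega^{\pm}} + h\|\rot\bfw_h\|_{0,\Omega} \lesssim h^{\theta}\|\rot\bfw_h\|_{0,\Omega}$ after invoking \eqref{lem_Hodge_est_eq01}.

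It remains to estimate $\|\nabla\phi_h\|_b$. The key is to exploit that $\bfw_h\in\bfK_h(\beta_h;\Omega)$ is $b_h$-discrete-div-free, so $b_h(\bfw_h,\nabla s_h)=0$ for all $s_h\in V^n_h$; taking $s_h=\phi_h$ and using $\nabla\phi_h = I^e_h\mathcal{H}_{\beta}\bfw_h - \bfw_h$,
\begin{equation*}
\| \nabla\phi_h \|_b^2 = b_h(\nabla\phi_h, \nabla\phi_h) = b_h(I^e_h\mathcal{H}_{\beta}\bfw_h - \bfw_h, \nabla\phi_h) = b_h(I^e_h\mathcal{H}_{\beta}\bfw_h - \mathcal{H}_{\beta}\bfw_h, \nabla\phi_h) + b_h(\mathcal{H}_{\beta}\bfw_h, \nabla\phi_h).
\end{equation*}
The first term on the right is bounded by $\|I^e_h\mathcal{H}_{\beta}\bfw_h - \mathcal{H}_{\beta}\bfw_h\|_b \|\nabla\phi_h\|_b \lesssim h^{\theta}\|\rot\bfw_h\|_{0,\Omega}\|\nabla\phi_h\|_b$ as above. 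For the second term, since $\mathcal{H}_{\beta}\bfw_h\in\bfK(\beta;\Omega)$ is genuinely $\beta$-div-free, $(\beta\mathcal{H}_{\beta}\bfw_h,\nabla s)_{\Omega}=0$ for all $s\in H^1_0(\Omega)$; the quantity $b_h(\mathcal{H}_{\beta}\bfw_h,\nabla\phi_h)$ is therefore the consistency error between $b_h$ and the exact $\beta$-weighted $L^2$ product, which I would bound by the projection estimate Lemma~\ref{lem_Pi_est} (and the thin-strip Lemma~\ref{lem_strip} to control the coefficient mismatch $\beta-\beta_h$ on $\Omega^{\Gamma}_{\epsilon h^2}$), giving $\lesssim h^{\theta}\|\mathcal{H}_{\beta}\bfw_h\|_{\theta,\Omega^{\pm}}\|\nabla\phi_h\|_b \lesssim h^{\theta}\|\rot\bfw_h\|_{0,\Omega}\|\nabla\phi_h\|_b$. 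Dividing through by $\|\nabla\phi_h\|_b$ and combining everything yields \eqref{lem_Hodge_est_eq02}. \textbf{The main obstacle} I anticipate is the consistency term $b_h(\mathcal{H}_{\beta}\bfw_h,\nabla\phi_h)$: one must carefully decompose $b_h$ into its projection part and its stabilization part, subtract the exact $(\beta\mathcal{H}_{\beta}\bfw_h,\nabla\phi_h)_{\Omega}$ (which vanishes), and control each resulting piece — including the coefficient jump region — uniformly in the anisotropic element shape, which is exactly where the $\tau(\theta)$-dependent geometry enters and where Lemmas~\ref{lem_Pi_est} and \ref{lem_L2stab} must be applied with care rather than through a naive scaling argument.
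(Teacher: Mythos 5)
Your proposal is correct and follows essentially the same route as the paper: both decompose via the interpolant $I^e_h\mathcal{H}_{\beta}\bfw_h$, identify $I^e_h\mathcal{H}_{\beta}\bfw_h - \bfw_h$ as a discrete gradient through the exact sequence, kill one term with the $b_h$-discrete-div-freeness of $\bfw_h$ and treat the other as a consistency error against the exact $\beta$-div-freeness of $\mathcal{H}_{\beta}\bfw_h$, controlled by Lemmas~\ref{lem_Pi_est}, \ref{lem_L2stab} and the thin-strip argument. The only cosmetic difference is that you pass through the triangle inequality and bound $\|\nabla\phi_h\|_b$ separately, whereas the paper expands $\|\bfw_h - \mathcal{H}_{\beta}\bfw_h\|_b^2$ directly; the ingredients and the vanishing cancellations are identical.
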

\begin{proof}
Due to the $\beta$-div-free property, \eqref{lem_Hodge_est_eq01} immediately follows from Lemma \ref{lem_embd_1} with the rot condition in \eqref{hodge_map}.
We concentrate on \eqref{lem_Hodge_est_eq02}. 
Define $\bfv_h := \bfw_h - \mathcal{H}_{\beta}\bfw_h = (\bfw_h - I^e_h \mathcal{H}_{\beta}\bfw_h)+(I^e_h \mathcal{H}_{\beta}\bfw_h - \mathcal{H}_{\beta}\bfw_h):= \bfv^{(1)}_h + \bfv^{(2)}_h$.
Then, we have
\begin{equation}
\begin{split}
\label{lem_Hodge_est_eq5}
 ( \bfw_h - \mathcal{H}_{\beta}\bfw_h , \bfw_h - \mathcal{H}_{\beta}\bfw_h )_{b}  
 =  \underbrace{(  \bfw_h , \bfv^{(1)}_h )_{b} }_{(I)} - \underbrace{ (   \mathcal{H}_{\beta}\bfw_h , \bfv^{(1)}_h )_{b}}_{(II)}  + \underbrace{ (  \bfw_h - \mathcal{H}_{\beta}\bfw_h , \bfv^{(2)}_h )_{b} }_{(III)}.
\end{split}
\end{equation}
We proceed to estimate each term above.
The commutative diagram in \eqref{DR_curl} implies
\begin{equation}
\label{lem_Hodge_est_eq3}
\rot \bfv^{(1)}_h = \rot( \bfw_h - I^e_h\mathcal{H}_{\beta}\bfw_h ) = \rot I^e_h( \bfw_h - \mathcal{H}_{\beta}\bfw_h ) = \Pi_h \rot( \bfw_h - \mathcal{H}_{\beta}\bfw_h ) =0.
\end{equation}
Then, by the exact sequence, $\bfv^{(1)}_h = \bfw_h - I^e_h\mathcal{H}_{\beta}\bfw_h\in \nabla V^n_h\subset \nabla H^1_0(\Omega)$, and thus the condition of $\bfw_h$ shows that $(I)$ vanishes. 
As for $(II)$, using the $\beta$-div-free property of $\mathcal{H}_{\beta}\bfw_h$ again, we have $(\beta \mathcal{H}_{\beta}\bfw_h, \bfv_h^{(1)})_{\Omega}=0$, and thus obtain
\begin{equation*}
\begin{split}
\label{lem_Hodge_est_eq4}
( \mathcal{H}_{\beta}\bfw_h, \bfv^{(1)}_h)_{b} & = \sum_{K\in\mathcal{T}_h} \underbrace{ ( \beta_h \Pi_K \mathcal{H}_{\beta}\bfw_h - \beta \mathcal{H}_{\beta}\bfw_h,  \bfv^{(1)}_h)_{K} }_{(IIa)}  +  \underbrace{ h ( (I-\Pi_K)\mathcal{H}_{\beta}\bfw_h\cdot\bft, (I-\Pi_K)\bfv^{(1)}_h\cdot\bft )_{\partial K} }_{(IIb)} .
\end{split}
\end{equation*}
By Lemma \ref{lem_Pi_est}, \eqref{lem_L2stab_eq01} in Lemma \ref{lem_L2stab} and Assumption \ref{interf_eps} with Lemma \ref{lem_strip}, we have
\begin{equation}
\begin{split}
\label{lem_Hodge_est_eq4_1}
 \sum_{K\in\mathcal{T}_h}  (IIa) & = \sum_{K\in\mathcal{T}_h}( \beta_h \Pi_K \mathcal{H}_{\beta}\bfw_h - \beta_h \mathcal{H}_{\beta}\bfw_h,  \bfv^{(1)}_h)_{K} + ( (\beta_h - \beta) \mathcal{H}_{\beta}\bfw_h,  \bfv^{(1)}_h)_{K} \\
& \le h^{\theta} \| \mathcal{H}_{\beta}\bfw_h \|_{\theta,\Omega^{\pm}} \| \bfv^{(1)}_h \|_{0,\Omega} + h  \| \mathcal{H}_{\beta}\bfw_h \|_{\theta,\Omega^{\pm}} \| \bfv^{(1)}_h \|_{0,\Omega}  \lesssim h^{\theta} \| \mathcal{H}_{\beta}\bfw_h \|_{\theta,\Omega^{\pm}} \| \bfv^{(1)}_h \|_b,
\end{split}
\end{equation}
where $\| \bfv^{(1)}_h \|_{b} = \| \bfv^{(1)}_h \|_a$. 
The estimate of $(IIb)$ follows from Lemma \ref{lem_Pi_est} immediately:
\begin{equation}
\label{lem_Hodge_est_eq4_2}
 \sum_{K\in\mathcal{T}_h}  (IIb) \lesssim h^{\theta} \| \mathcal{H}_{\beta}\bfw_h \|_{\theta,\Omega^{\pm}} \| \bfv^{(1)}_h \|_b.
\end{equation}
The estimate of $(III)$ follows from applying \eqref{lem_interp_est_eq02} in Lemma \ref{lem_interp_est} to $\bfv^{(2)}_h$.
Therefore, \eqref{lem_Hodge_est_eq5} becomes
\begin{equation*}
\begin{split}
\label{lem_Hodge_est_eq6}
\| \bfv_h \|^2_{b} &\lesssim h^{\theta} \| \mathcal{H}_{\beta}\bfw_h \|_{\theta,\Omega^{\pm}} ( \| \bfv_h \|_{b} + \| \bfv^{(2)}_h \|_{b}) + \| \bfv_h  \|_{b}\| \bfv^{(2)}_h \|_{b} \\
& \lesssim h^{\theta} ( \| \mathcal{H}_{\beta}\bfw_h \|_{\theta,\Omega^{\pm}} + \| \rot \mathcal{H}_{\beta}\bfw_h \|_{0,\Omega} ) \| \bfv_h \|_{b}  + h^{2\theta}\| \mathcal{H}_{\beta}\bfw_h \|_{\theta,\Omega^{\pm}} ( \| \mathcal{H}_{\beta}\bfw_h \|_{\theta,\Omega^{\pm}} + \| \rot \mathcal{H}_{\beta}\bfw_h \|_{0,\Omega} ),
\end{split}
\end{equation*}
which yields the desired estimate with \eqref{lem_Hodge_est_eq01}.
\end{proof}

 

  \section{Error estimates}
  \label{sec:erroreqn}
  
  Now, we proceed to estimate the solution errors for the virtual scheme \eqref{vem0}, and point out again that the main difficult is the relatively low regularity of the solution. 
Based on Lemma \ref{lem_embd_1}, we shall consider the solutions to \eqref{model1} in the following space
\begin{equation}
\label{solureg}
\bfX^{\theta}(\Omega) = \{ \bfu\in \bfH^{\theta}(\Omega^{\pm}): \alpha\rot\bfu\in \bfH^{\theta}(\Omega) \}, 
\end{equation}
where $\theta$ is inherited from Lemma \ref{lem_embd_1}.
However, such a regularity requirement alone cannot lead us to the optimal error estimates in that a straightforward application of the meta VEM framework in the literature only yields a suboptimal convergence.
In order to overcome this issue, we shall make the following assumption regarding the regularity of the source term:
 \begin{equation}
\label{fregularity}
\bff \in \bfH^{\theta'}(\Omega^{\pm}) , ~~~ \text{for some} ~ \theta' \ge \theta .
\end{equation}
Consequently, the following stability holds:
\begin{equation}
\label{fregularity2}
\| \bfu \|_{\theta,\rot;\Omega} \lesssim \| \bff \|_{0,\Omega} \le \| \bff \|_{\theta',\Omega^{\pm}}.
\end{equation}
 We highlight that this is merely the regularity for the source not for the exact solution. Generally, the source regularity alone cannot be directly related to the solution regularity due to the appearance of discontinuous coefficients and geometric singularities, see \cite{2000CostabelDauge,1999MartinMoniqueSerge,2004CostabelDaugeNicaise}. 
 In addition, assuming higher regularity for the source term is often used for obtaining optimal error estimates when the solution regularity is pretty low, see \cite{2016Ciarlet,2002Nicaise}.
 

Now, we proceed to present the error analysis of the indefinite scheme.
First decompose the solution error $\bfe_h := \bfu - \bfu_h= \bfxi_h + \bfeta_h$ where 
  \begin{equation}
\label{err_decomp_1}
\bfxi_h =  \bfu - I^e_h \bfu ~~~~ \text{and} ~~~~ \bfeta_h = I^e_h \bfu - \bfu_h.
\end{equation}
The difficulty is to estimate $\bfeta_h$, and the idea is to employ its discrete decomposition in terms of the inner product $b_h(\cdot,\cdot)$ defined in \eqref{Hdeomp1}:
\begin{equation}
\label{err_decomp_2}
\bfeta_h = \bfr_h + \nabla q_h, ~~~ \bfr_h\in \bfK_h(\beta_h;\Omega), ~~q_h\in V^n_h.
\end{equation}
Hence, the error estimation of $\bfeta_h$ reduces to estimating $\bfr_h$ and $\nabla q_h$.
For $\| \bfe_h \|_{0,\Omega}$, the following lemma shows that it is sufficient to bound $\|\bfe_h\|_a$.
\begin{lemma}
\label{lem_L2}
Let $\bfu\in \bfX^{\theta}(\Omega)$. Then, there holds
\begin{equation}
\label{lem_L2_eq0}
\| \bfe_h \|_{0,\Omega} \lesssim h^{\theta}\| \bff \|_{\theta',\Omega^{\pm}} + \| \bfe_h \|_a .
\end{equation} 
\end{lemma}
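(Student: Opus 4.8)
The plan is to treat this as a reduction step rather than a duality estimate: we trade the true $L^2$-norm of $\bfe_h$ for the stronger norm $\|\cdot\|_a$ plus a consistency term of the optimal order $h^{\theta}$ (the interpolation error). We use the splitting $\bfe_h=\bfxi_h+\bfeta_h$ from \eqref{err_decomp_1}, estimate the interpolation part $\bfxi_h=\bfu-I^e_h\bfu$ directly, and invoke the discrete Poincar\'e-type bound \eqref{lem_L2stab_eq01} for the discrete part $\bfeta_h=I^e_h\bfu-\bfu_h\in\bfV^e_h$.

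First I would verify that the interpolation estimates apply to $\bfu$. Since $\bfu$ solves \eqref{model1} and $\bff\in\bfH(\ddiv;\Omega)$, we get $\ddiv(\beta\bfu)=\ddiv(\brot\alpha\rot\bfu-\bff)=-\ddiv\bff\in L^2(\Omega)$, hence $\bfu\in\bfH(\beta,\ddiv;\Omega)\cap\bfH(\rot;\Omega)$; together with $\bfu\in\bfX^{\theta}(\Omega)\subset\bfH^{\theta}(\Omega^{\pm})$ this yields $\bfu\in\bfY^{\theta}(\Omega)$ and, moreover, $\alpha\rot\bfu\in\bfH^{\theta}(\Omega)$. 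Thus Lemmas~\ref{lem_Pi_est} and~\ref{lem_interp_est} are available with $s=\theta$, and combining \eqref{lem_interp_est_eq01}, \eqref{lem_interp_est_eq02} and \eqref{lem_interp_est_eq03} with the a priori bound \eqref{fregularity2} and the elementary inequality $h\le h^{\theta}$ for small $h$ and $\theta\in(1/2,1]$, I obtain
\begin{equation*}
\|\bfxi_h\|_{0,\Omega}+\|\bfxi_h\|_{a}\;\lesssim\;h^{\theta}\bigl(\|\bfu\|_{\theta,\Omega^{\pm}}+\|\alpha\rot\bfu\|_{\theta,\Omega}\bigr)+h\|\rot\bfu\|_{0,\Omega}\;\lesssim\;h^{\theta}\|\bff\|_{\theta',\Omega^{\pm}}.
\end{equation*}

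Next I would estimate $\bfeta_h\in\bfV^e_h$. The continuity of $I:(\bfV^e_h,\|\cdot\|_a)\to(\bfV^e_h,\|\cdot\|_{0,\Omega})$ in \eqref{lem_L2stab_eq01} gives $\|\bfeta_h\|_{0,\Omega}\lesssim\|\bfeta_h\|_a$; and since $\bfeta_h=\bfe_h-\bfxi_h$ and $\|\cdot\|_a$ is a seminorm on $\bfH^{\theta}(\rot;\Omega^{\pm})\oplus\bfV^e_h$ (so the triangle inequality holds), $\|\bfeta_h\|_a\le\|\bfe_h\|_a+\|\bfxi_h\|_a$. Combining with $\|\bfe_h\|_{0,\Omega}\le\|\bfxi_h\|_{0,\Omega}+\|\bfeta_h\|_{0,\Omega}$ and inserting the display above then yields \eqref{lem_L2_eq0}.

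No duality is needed, and the indefiniteness of the scheme plays no role in this lemma. The only point requiring mild care is that every norm above is taken on the mixed space $\bfH^{s}(\rot;\Omega^{\pm})\oplus\bfV^e_h$, $s>1/2$, from \eqref{abnorm}: in particular $\|\bfxi_h\|_a$ and the $\|\cdot\|_a$-triangle inequality rely on the element-edge tangential traces of $\bfu$ being well defined, which is guaranteed by $\bfu\in\bfX^{\theta}(\Omega)$ with $\theta>1/2$. The substantive task, i.e., bounding $\|\bfe_h\|_a$ for the indefinite scheme, is handled separately.
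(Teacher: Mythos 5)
Your proof is correct and follows essentially the same route as the paper: split $\bfe_h=\bfxi_h+\bfeta_h$, bound $\|\bfeta_h\|_{0,\Omega}$ by $\|\bfeta_h\|_a$ via \eqref{lem_L2stab_eq01}, apply the triangle inequality $\|\bfeta_h\|_a\le\|\bfxi_h\|_a+\|\bfe_h\|_a$, estimate $\|\bfxi_h\|_{0,\Omega}+\|\bfxi_h\|_a$ by Lemma~\ref{lem_interp_est}, and close with \eqref{fregularity2}. The extra paragraph checking $\bfu\in\bfY^{\theta}(\Omega)$ so that the interpolation lemmas apply is a harmless (and reasonable) elaboration, not a different argument.
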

\begin{proof}
Due to \eqref{lem_L2stab_eq01} in Lemma \ref{lem_L2stab}, we have $\| \bfeta_h \|_{0,\Omega} \lesssim \| \bfeta_h \|_{a}$ as $\bfeta_h \in \bfV^e_h$.
Then, by Lemma \ref{lem_interp_est} with the triangular inequality, we obtain
\begin{equation}
\label{lem_L2_eq1}
\| \bfe_ h \|_{0,\Omega}  \lesssim \| \bfxi_h \|_{0,\Omega} + \| \bfeta_h \|_{a} \lesssim \| \bfxi_h \|_{0,\Omega} + \| \bfxi_h \|_a + \| \bfe_h \|_{a} \lesssim h^{\theta} \| \bfu \|_{\theta,\rot;\Omega} + \| \bfe_h \|_a,
\end{equation}
which yields \eqref{lem_L2_eq0} by \eqref{fregularity2}.
\end{proof}

Note that the virtual element solution $\bfu_h$ is not consistent to the true solution either in terms of either the bilinear form $a(\cdot,\cdot)$ or $a_h(\cdot,\cdot)$. 
But, we can show that the inconsistency errors have the optimal order, which is given by the following lemma.
\begin{lemma}
\label{lem_consist}
Let $\bfu\in \bfX^{\theta}(\Omega)$ and let $\bff\in \bfH^{\theta'}(\Omega^{\pm}) $.
There holds that
\begin{equation}
\label{lem_consist_eq0}
a_h(\bfe_h,\bfv_h) \lesssim h^{\theta} \| \bff \|_{\theta',\Omega^{\pm}}  \| \bfv_h \|_{a} , ~~~ \forall \bfv_h \in \bfV^e_h.
\end{equation}
\end{lemma}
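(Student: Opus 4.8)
The plan is to establish the consistency estimate by inserting the exact equation \eqref{model2} and the discrete scheme \eqref{vem0}, then carefully tracking three sources of inconsistency: the replacement of $\alpha,\beta$ by $\alpha_h,\beta_h$ (controlled by the thin-strip lemma, Lemma \ref{lem_strip}), the replacement of the true $L^2$ inner product by the stabilized form $b_h(\cdot,\cdot)$ in the zeroth-order term, and the replacement of $\bff$ by $\Pi_h\bff$ on the right-hand side. First I would write, for arbitrary $\bfv_h\in\bfV^e_h$,
\begin{equation*}
a_h(\bfe_h,\bfv_h) = a_h(\bfu,\bfv_h) - a_h(\bfu_h,\bfv_h) = a_h(\bfu,\bfv_h) - (\bff,\Pi_h\bfv_h)_{\Omega},
\end{equation*}
and then subtract and add the exact relation $a(\bfu,\bfv_h) = (\bff,\bfv_h)_{\Omega}$ (valid since $\bfv_h\in\bfV^e_h\subset\bfH_0(\rot;\Omega)$), obtaining
\begin{equation*}
a_h(\bfe_h,\bfv_h) = \big( a_h(\bfu,\bfv_h) - a(\bfu,\bfv_h) \big) + \big( (\bff,\bfv_h)_{\Omega} - (\bff,\Pi_h\bfv_h)_{\Omega} \big).
\end{equation*}

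For the second bracket, I would use $(\bff,\bfv_h - \Pi_h\bfv_h)_{\Omega} = \sum_K (\bff - \Pi_K\bff,\bfv_h - \Pi_K\bfv_h)_K$ by the orthogonality defining $\Pi_K$, then bound each factor: $\|\bff - \Pi_K\bff\|_{0,K}\lesssim h^{\theta}\|\bff\|_{\theta,\Omega^{\pm}}$ when summed (using $\theta'\ge\theta$ and a standard approximation estimate for $\Pi_K$ on the virtual mesh, analogous to Lemma \ref{lem_Pi_est}), while $\sum_K\|\bfv_h-\Pi_K\bfv_h\|_{0,K}^2\lesssim \|\bfv_h\|_a^2$ follows from \eqref{lem_Poincare_eq0} applied to $\bfv_h-\Pi_K\bfv_h$ as in the proof of Lemma \ref{lem_L2stab}. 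For the first bracket I would split $a_h(\bfu,\bfv_h) - a(\bfu,\bfv_h)$ into the $\rot$ part $(\alpha_h\rot\bfu - \alpha\rot\bfu,\rot\bfv_h)_{\Omega}$ and the zeroth-order part $(\beta\bfu,\bfv_h)_{\Omega} - b_h(\bfu,\bfv_h)$. The $\rot$ part is supported on the mismatching region $\subseteq\Omega^{\Gamma}_{\epsilon h^2}$ by Assumption \ref{interf_eps}, so Lemma \ref{lem_strip} (applied to $\alpha\rot\bfu\in\bfH^{\theta}(\Omega)$, using $\bfu\in\bfX^{\theta}(\Omega)$) gives a factor $h\|\alpha\rot\bfu\|_{\theta,\Omega^{\pm}}\lesssim h\|\bff\|_{\theta',\Omega^{\pm}}$ times $\|\rot\bfv_h\|_{0,\Omega}\le\|\bfv_h\|_a$.

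The zeroth-order part is the most delicate: writing $(\beta\bfu,\bfv_h)_{\Omega} - b_h(\bfu,\bfv_h) = \sum_K\big[(\beta\bfu,\bfv_h)_K - (\beta_h\Pi_K\bfu,\Pi_K\bfv_h)_K - h((I-\Pi_K)\bfu\cdot\bft,(I-\Pi_K)\bfv_h\cdot\bft)_{\partial K}\big]$, I would insert $\beta_h$ for $\beta$ (strip estimate, since $\beta-\beta_h$ is supported in $\Omega^{\Gamma}_{\epsilon h^2}$ and $\bfu\in\bfH^{\theta}(\Omega^{\pm})$), then handle $(\beta_h\bfu,\bfv_h)_K - (\beta_h\Pi_K\bfu,\Pi_K\bfv_h)_K = (\beta_h(\bfu-\Pi_K\bfu),\bfv_h)_K + (\beta_h\Pi_K\bfu,\bfv_h-\Pi_K\bfv_h)_K$ and bound $\bfu-\Pi_K\bfu$ and $(I-\Pi_K)\bfu\cdot\bft$ by Lemma \ref{lem_Pi_est}, giving $h^{\theta}\|\bfu\|_{\theta,\Omega^{\pm}}$; the companion factors in $\bfv_h$ are again absorbed into $\|\bfv_h\|_a$ (for the volume term via \eqref{lem_L2stab_eq01}, for the boundary term via $h^{1/2}\|(I-\Pi_K)\bfv_h\cdot\bft\|_{0,\partial K}\le\|\bfv_h\|_b$). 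Collecting everything and using \eqref{fregularity2} to convert $\|\bfu\|_{\theta,\rot;\Omega}$ into $\|\bff\|_{\theta',\Omega^{\pm}}$ yields \eqref{lem_consist_eq0}. The main obstacle is bookkeeping in the zeroth-order term: one must verify at each splitting that whatever multiplies a low-regularity $\bfv_h$-factor carries enough regularity (either $\bfH^{\theta}(\Omega^{\pm})$ smoothness or localization to the $h^2$-thin strip) so that the crude bound $\|\bfv_h\|_a$ — rather than an $h^{-1}$-weighted quantity — suffices; this is precisely where the optimal $\mathcal{O}(h)$ stabilization scaling and the thin-strip width $\epsilon h^2$ are used in tandem.
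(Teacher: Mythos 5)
Your decomposition $a_h(\bfe_h,\bfv_h) = \big(a_h(\bfu,\bfv_h)-a(\bfu,\bfv_h)\big) + (\bff,\bfv_h-\Pi_h\bfv_h)_\Omega$, the treatment of the three resulting pieces via the thin-strip lemma, Lemma~\ref{lem_Pi_est} and \eqref{lem_L2stab_eq01}, and the final conversion via \eqref{fregularity2} reproduce the paper's argument step for step. The only cosmetic difference is the extra term $(\beta_h\Pi_K\bfu,\bfv_h-\Pi_K\bfv_h)_K$ in your zeroth-order splitting, which vanishes identically by the $L^2$-orthogonality defining $\Pi_K$ (and in the paper's version simply never appears).
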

\begin{proof}
Notice that
\begin{equation}
\begin{split}
\label{lem_consist_eq1}
a_h(\bfe_h,\bfv_h) & = a_h(\bfu,\bfv_h) - a_h(\bfu_h,\bfv_h) = a_h(\bfu,\bfv_h) - (\bff, \bfv_h)_{\Omega} + (\bff, \bfv_h - \Pi_h \bfv_h)_{\Omega}, \\
& =  \sum_{K\in\mathcal{T}_h}\underbrace{ (\alpha_h \rot \bfu, \rot \bfv_h)_{K} - ( \alpha \rot \bfu, \rot \bfv_h )_{K}}_{(I)} \\
& + \underbrace{ (\beta \bfu, \bfv_h)_{K} - b_h(\bfu,\bfv_h)_K }_{(II)} + \underbrace{ (  \bff, \bfv_h - \Pi_K \bfv_h )_K }_{(III)} ,
\end{split}
\end{equation} 
where in the third equality we have used integration by parts and $\bfv_h$ being $\bfH(\rot;\Omega)$-conforming.
We first estimate $(I)$ by applying Lemma \ref{lem_strip}:
\begin{equation}
\begin{split}
\label{lem_consist_eq2}
\sum_{K\in\mathcal{T}_h} (I) & \le \| \rot \bfu \|_{0,\Omega^{\Gamma}_{\epsilon h^2}} \| \rot\bfv_h \|_{0,\Omega} \lesssim h \| \rot \bfu \|_{\theta,\rot;\Omega^{\pm}} \| \bfv_h \|_a.
\end{split}
\end{equation}
 Next, for $(II)$, by \eqref{lem_L2stab_eq01} in Lemma \ref{lem_L2stab}, Lemma \ref{lem_Pi_est} and Lemma \ref{lem_strip}, we have
 \begin{equation}
 \begin{split}
\label{lem_consist_eq3}
\sum_{K\in\mathcal{T}_h}(II) & =  \sum_{K\in\mathcal{T}_h} [ \beta_h (\bfu -   \Pi_K \bfu) ,  \bfv_h)_{K} + ( \beta \bfu -  \beta_h  \bfu ,  \bfv_h)_{K}  \\
& ~~~~~~~~~~~~ -  h_K ( (I - \Pi_K)\bfu\cdot\bft , (I - \Pi_K)\bfv_h\cdot\bft  )_{\partial K} ] \\
 & \lesssim \| \bfu \|_{0,\Omega^{\Gamma}_{\epsilon h^2}} \| \bfv_h \|_{0,\Omega}  + h^{\theta} \| \bfu \|_{\theta,\rot,\Omega^{\pm}} \| \bfv_h \|_a  \lesssim h^{\theta} \| \bfu \|_{\theta,\rot,\Omega^{\pm}} \| \bfv_h \|_a  .
 \end{split}
\end{equation}
As for $(III)$, by inserting $\Pi_K \bff$ and using Lemma \ref{lem_Pi_est} and \eqref{lem_L2stab_eq01} in Lemma \ref{lem_L2stab}, we have
\begin{equation}
\begin{split}
\label{lem_consist_eq4}
& \sum_{K\in\mathcal{T}_h} (III)  = \sum_{K\in\mathcal{T}_h}  ( \bff - \Pi_K \bff, \bfv_h - \Pi_K \bfv_h )_K \\
 \lesssim & \left(\sum_{K\in\mathcal{T}_h} \| \bff - \Pi_K \bff \|^2_{0,K} \right)^{1/2} \left( \sum_{K\in\mathcal{T}_h} \|  \bfv_h - \Pi_K \bfv_h  \|^2_{0,K} \right)^{1/2}
\lesssim h^{\theta'} \| \bff \|_{\theta',\Omega^{\pm}}  \| \bfv_h  \|_a.
\end{split}
\end{equation}
Putting \eqref{lem_consist_eq2}-\eqref{lem_consist_eq4} into \eqref{lem_consist_eq1} yields the desired result with \eqref{fregularity2}.
\end{proof}

Then, we consider the consistence error associated with the original bilinear form $a(\cdot,\cdot)$, and a similar estimate for the $H^1$-elliptic equation can be found in \cite{2018BrennerSung}.
\begin{lemma}
\label{lem_consist_a}
Let $\bfu,\bfv\in \bfX^{\theta}(\Omega)$ and let $\bff\in \bfH^{\theta'}(\Omega^{\pm}) $.
There holds that
\begin{equation}
\label{lem_consist_a_eq0}
a(\bfe_h, I^e_h \bfv) \lesssim h^{\theta} (  \| \bff \|_{\theta',\Omega^{\pm}} + \| \bfu_h \|_{b} ) \| \bfv \|_{\theta,\rot;\Omega^{\pm}} .
\end{equation}
\end{lemma}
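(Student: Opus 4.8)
The plan is to compare the continuous bilinear form $a(\cdot,\cdot)$ evaluated on $(\bfe_h, I^e_h\bfv)$ against the discrete form $a_h(\cdot,\cdot)$ for which the optimal consistency estimate in Lemma~\ref{lem_consist} is already available. Writing
\begin{equation*}
a(\bfe_h, I^e_h\bfv) = a_h(\bfe_h, I^e_h\bfv) + \big( a(\bfe_h, I^e_h\bfv) - a_h(\bfe_h, I^e_h\bfv) \big),
\end{equation*}
the first term is bounded by $h^\theta \|\bff\|_{\theta',\Omega^\pm}\,\|I^e_h\bfv\|_a$ via Lemma~\ref{lem_consist}, and $\|I^e_h\bfv\|_a \lesssim \|\bfv\|_{\theta,\rot;\Omega^\pm}$ follows by writing $\|I^e_h\bfv\|_a \le \|\bfv - I^e_h\bfv\|_a + \|\bfv\|_a$, using \eqref{lem_interp_est_eq02} and \eqref{lem_interp_est_eq03} of Lemma~\ref{lem_interp_est} for the interpolation error and \eqref{lem_L2stab_eq02} of Lemma~\ref{lem_L2stab} together with the definition \eqref{abnorm} to bound $\|\bfv\|_a$ by $\|\bfv\|_{\theta,\rot;\Omega^\pm}$. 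So the crux is the second term, the difference $a - a_h$ applied to $(\bfe_h, I^e_h\bfv)$.

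Expanding the difference elementwise, $(a - a_h)(\bfw, \bfz) = \sum_K \big[(\alpha\rot\bfw,\rot\bfz)_K - (\alpha_h\rot\bfw,\rot\bfz)_K\big] + \sum_K\big[b_K(\bfw,\bfz) - (\beta\bfw,\bfz)_K\big]$. The first sum is supported on the mismatch region $\Omega^\Gamma_{\epsilon h^2}$ since $\alpha = \alpha_h$ off that set; applying Cauchy--Schwarz and the thin-strip Lemma~\ref{lem_strip} to $\rot\bfe_h$ — more precisely splitting $\rot\bfe_h = \rot(\bfu - I^e_h\bfu) + \rot I^e_h\bfu - \rot\bfu_h$ and controlling each piece in $L^2(\Omega^\Gamma_{\epsilon h^2})$ — yields a bound of the form $h\,(\text{stuff})\,\|\rot I^e_h\bfv\|_{0,\Omega}$. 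The second sum is exactly the quantity estimated in the proof of Lemma~\ref{lem_consist}, term $(II)$: it equals $\sum_K\big[(\beta_h\Pi_K\bfw - \beta\bfw, \Pi_K\bfz)_K + \ldots\big]$ and is bounded, via Lemma~\ref{lem_Pi_est}, Lemma~\ref{lem_strip}, and \eqref{lem_L2stab_eq01}, by $h^\theta$ times appropriate norms of $\bfw$ and $\bfz$. The slight complication is that here $\bfw = \bfe_h$ involves the non-virtual function $\bfu$, so one must keep $\bfu$ and $\bfu_h$ separate: the $\bfu$ part is handled as in Lemma~\ref{lem_consist}, and the $\bfu_h$ part produces the $\|\bfu_h\|_b$ factor appearing in the claimed bound, since $b_K(\bfu_h,\cdot)$ is genuinely part of $a_h$ and cannot be absorbed.

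Assembling, every term carries at least $h^\theta$ and a factor that is either $\|\bff\|_{\theta',\Omega^\pm}$ (from the consistency and source-approximation estimates) or $\|\bfu_h\|_b$ (from the discrete mass term acting on $\bfu_h$), multiplied by $\|I^e_h\bfv\|_a$ which is then replaced by $\|\bfv\|_{\theta,\rot;\Omega^\pm}$ as above; collecting the two alternatives inside one parenthesis gives precisely \eqref{lem_consist_a_eq0}. The main obstacle I anticipate is bookkeeping in the mismatch-region term: one needs $\rot\bfe_h$, not $\bfe_h$ itself, to be controllable on $\Omega^\Gamma_{\epsilon h^2}$, which forces going through $\rot I^e_h\bfu = \Pi_h\rot\bfu$ (commuting diagram \eqref{DR_curl}) and the additional regularity hypothesis $\alpha\rot\bfu\in\bfH^\theta(\Omega)$ built into $\bfX^\theta(\Omega)$; without that extra regularity the strip estimate Lemma~\ref{lem_strip} would not apply to $\rot\bfu$, and the factor $\|\bfu_h\|_b$ on the discrete side must be tracked carefully since it is not a priori bounded and will only later be absorbed by an inf-sup/Gårding argument.
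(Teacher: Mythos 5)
Your organizing decomposition $a(\bfe_h, I^e_h\bfv) = a_h(\bfe_h, I^e_h\bfv) + (a - a_h)(\bfe_h, I^e_h\bfv)$, with the first term dispatched via Lemma~\ref{lem_consist} and $\|I^e_h\bfv\|_a \lesssim \|\bfv\|_{\theta,\rot;\Omega^{\pm}}$, is a genuinely different and arguably cleaner top-level split than the paper's three-term decomposition $(I)-(II)-(III)$ (which does not call Lemma~\ref{lem_consist} as a black box but re-runs its constituent arguments alongside the discrete equation). The two routes converge on the same remaining difficulty, namely controlling $(a-a_h)$ on the interface-mismatch strip and the $b_h$-vs-$(\beta\cdot,\cdot)$ consistency error, so the strategy is not wrong in spirit.

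However, there is a concrete gap in how you treat the mismatch term $((\alpha - \alpha_h)\rot\bfe_h, \rot I^e_h\bfv)_\Omega$. You propose to split $\rot\bfe_h = \rot(\bfu - I^e_h\bfu) + (\rot I^e_h\bfu - \rot\bfu_h)$ and control each piece in $L^2(\Omega^\Gamma_{\epsilon h^2})$ via Lemma~\ref{lem_strip}. But Lemma~\ref{lem_strip} requires $H^s$ regularity with $s>1/2$, and the second piece $\rot I^e_h\bfu - \rot\bfu_h = \Pi_h\rot\bfu - \rot\bfu_h$ is piecewise constant (as is $\rot I^e_h\bfv$), so no such fractional regularity is available; even the commutativity $\rot I^e_h\bfu = \Pi_h\rot\bfu$ does not rescue this, because the $L^2$ projection destroys the fractional smoothness of $\rot\bfu$. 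A plain $L^2$-to-$L^2$ strip estimate for piecewise constants yields at best a factor of order $h^{1/2}$ (and with a shape-dependent constant that degrades for anisotropic elements), which is strictly weaker than the needed $h^\theta$ for $\theta>1/2$. The paper avoids this by applying the strip estimate to the \emph{test} side: it inserts $\pm\rot\bfv$, bounds $((\alpha-\alpha_h)\rot\bfu_h,\rot\bfv)_\Omega$ using Lemma~\ref{lem_strip} on $\rot\bfv\in H^\theta(\Omega^\pm)$, and bounds the remainder $((\alpha-\alpha_h)\rot\bfu_h,\rot(I^e_h\bfv-\bfv))_\Omega$ by the interpolation estimate \eqref{lem_interp_est_eq03}. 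Your argument needs the same insertion of $\pm\rot\bfv$; without it the strip step does not close, and the "$h\,(\text{stuff})\,\|\rot I^e_h\bfv\|_{0,\Omega}$" bound you assert is unsubstantiated for the discrete pieces. The remaining parts of your sketch — routing the $b_h(\bfu_h,\cdot)-(\beta\bfu_h,\cdot)_\Omega$ consistency through Lemma~\ref{lem_Pi_est} and \eqref{lem_L2stab_eq01}, which produces the $\|\bfu_h\|_b$ factor — are aligned with the paper's term $(III)$ and are fine.
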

\begin{proof}
We notice that
\begin{equation}
\begin{split}
\label{lem_consist_a_eq1}
a(\bfe_h, \bfv_h ) & = \underbrace{( (\alpha \rot \bfu, \rot I^e_h\bfv)_{\Omega}  - (\beta \bfu, I^e_h\bfv)_{\Omega} )}_{(I)} \\
 &  - \underbrace{ ( (\alpha \rot \bfu_h, \rot I^e_h\bfv)_{\Omega}  - b_h(\bfu_h, I^e_h\bfv) ) }_{(II)}  - \underbrace{ ( b_h(\bfu_h, I^e_h\bfv) - (\beta  \bfu_h, I^e_h\bfv )_{\Omega}) }_{(III)}.
\end{split}
\end{equation}
By \eqref{vem0}, we can write down
\begin{equation}
\begin{split}
\label{lem_consist_a_eq2}
(II) & = (\alpha_h \rot \bfu_h, \rot I^e_h\bfv )_{\Omega}  - b_h(\bfu_h, I^e_h\bfv) + ( (\alpha-\alpha_h) \rot \bfu_h, \rot I^e_h\bfv)_{\Omega} \\
& = (\bff, \Pi_h I^e_h\bfv )_{\Omega} + ( (\alpha-\alpha_h) \rot \bfu_h, \rot \bfv)_{\Omega}  + ( (\alpha-\alpha_h) \rot \bfu_h, \rot (I^e_h\bfv - \bfv) )_{\Omega}.
\end{split}
\end{equation}
As $(I)=(\bff, I^e_h\bfv)_{\Omega}$, by using the similar argument to $(III)$ in \eqref{lem_consist_eq1}, \eqref{lem_interp_est_eq03} in Lemma \ref{lem_interp_est} and the thin-strip argument by Lemma \ref{lem_strip}, we obtain
\begin{equation}
\begin{split}
\label{lem_consist_a_eq3}
 (I) - (II)  = & (\bff , I^e_h\bfv - \Pi_h I^e_h\bfv)_{\Omega}  -  ( (\alpha-\alpha_h) \rot \bfu_h, \rot \bfv)_{\Omega}  - ( (\alpha-\alpha_h) \rot \bfu_h, \rot (I^e_h\bfv - \bfv) )_{\Omega} \\
\lesssim &  h^{\theta'}  \| \bff \|_{\theta',\Omega^{\pm}} \|  I^e_h\bfv \|_{a} +  h^{\theta}  \| \rot \bfu_h \|_{0,\Omega} \| \rot\bfv \|_{\theta,\Omega^{\pm}} .
\end{split}
\end{equation}
Furthermore, it is not hard to see $\| I^e_h \bfv \|_a\lesssim \| \bfv \|_{\theta,\rot;\Omega^{\pm}}$.
Next, for $(III)$ in \eqref{lem_consist_a_eq1}, we notice
\begin{equation}
\begin{split}
\label{lem_consist_a_eq4}
(III) = & \underbrace{ (\beta_h  \bfu_h, \Pi_h I^e_h \bfv - I^e_h \bfv)_{\Omega} }_{(IIIa)} + \underbrace{ ( (\beta_h - \beta)\bfu_h, I^e_h \bfv - \bfv )_{\Omega} }_{(IIIb)}\\
& + \underbrace{ ( (\beta_h - \beta)\bfu_h,  \bfv )_{\Omega} }_{(IIIc)} - \underbrace{ h \sum_{K\in\mathcal{T}_h} ( (I-\Pi_K)\bfu_h\cdot\bft, (I-\Pi_K) I^e_h\bfv\cdot\bft)_{\partial K} }_{(IIId)} .
\end{split}
\end{equation}
The estimate of $(IIIa)$ follows from the following decomposition together with \eqref{lem_interp_est_eq01} in Lemma \ref{lem_interp_est} and Lemma \ref{lem_Pi_est}:
\begin{equation}
\label{lem_consist_a_eq5}
\| \Pi_h I^e_h \bfv - I^e_h \bfv \|_{0,\Omega} \le \| \Pi_h(I^e_h\bfv-\bfv) \|_{0,\Omega} + \| \Pi_h \bfv - \bfv \|_{0,\Omega}  + \| \bfv - I^e_h \bfv \|_{0,\Omega}  \lesssim h^{\theta} \| \bfv \|_{\theta,\rot;\Omega}.
\end{equation}
$(IIIb)$ follows from \eqref{lem_interp_est_eq01} immediately, while $(IIIc)$ can be estimated by the thin-strip argument from Lemma \ref{lem_strip}. Then, it remains to estimate $(IIId)$. The H\"older's inequality, \eqref{lem_interp_est_eq02} in Lemma \ref{lem_interp_est} and Lemma \ref{lem_Pi_est} imply
\begin{equation}
\label{lem_consist_a_eq6}
(IIId) \lesssim \| \bfu_h \|_b \left(( \| I^e_h \bfv - \bfv \|_b +  h \sum_{K\in\mathcal{T}_h} \|(I - \Pi_K)\bfv\cdot\bft \|^2_{0,\partial K}  \right) \lesssim h^{\theta} \| \bfu_h \|_b \| \bfv \|_{\theta,\rot;\Omega}.
\end{equation}
Putting the estimates above together into \eqref{lem_consist_a_eq4}, we obtain the estimate of $(III)$. Combining it with \eqref{lem_consist_a_eq3}, we have the desired estimate.
\end{proof}


Now, we are ready to estimate $\bfe_h$ and $\rot\bfe_h$ by borrowing some argument from \cite{2009ZhongShuWittumXu}.
\begin{lemma}
\label{lem_err0}
Let $\bfu\in \bfX^{\theta}(\Omega)$ and $\bff\in \bfH^{\theta'}(\Omega^{\pm}) $.
There holds that
\begin{equation}
\label{lem_err0_eq0}
\| \bfe_h \|_{b} \lesssim h^{\theta}  \| \bff \|_{\theta',\Omega^{\pm}}   +  \| \bfr_h \|_b.
\end{equation}
\end{lemma}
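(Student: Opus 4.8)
The plan is to start from the splitting $\bfe_h = \bfxi_h + \bfeta_h = \bfxi_h + \bfr_h + \nabla q_h$ furnished by \eqref{err_decomp_1} and the discrete Helmholtz decomposition \eqref{err_decomp_2}, so that by the triangle inequality $\| \bfe_h \|_b \le \| \bfxi_h \|_b + \| \bfr_h \|_b + \| \nabla q_h \|_b$ and it suffices to control the interpolation term $\| \bfxi_h \|_b$ and the gradient term $\| \nabla q_h \|_b$, the term $\| \bfr_h \|_b$ being already present on the right-hand side of \eqref{lem_err0_eq0}. The first is immediate: \eqref{lem_interp_est_eq02} in Lemma \ref{lem_interp_est} gives $\| \bfxi_h \|_b \lesssim h^\theta \| \bfu \|_{\theta,\Omega^\pm} + h\|\rot\bfu\|_{0,\Omega} \lesssim h^\theta \| \bfu \|_{\theta,\rot;\Omega}$, which by the a priori bound \eqref{fregularity2} is $\lesssim h^\theta \| \bff \|_{\theta',\Omega^\pm}$.

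For the gradient part I would test the decomposition \eqref{err_decomp_2} against $\nabla q_h$. Since $\bfr_h \in \bfK_h(\beta_h;\Omega)$, the defining relation $b_h(\bfr_h,\nabla s_h)=0$ for all $s_h\in V^n_h$ makes the cross term vanish, leaving $\| \nabla q_h \|_b^2 = b_h(\bfeta_h,\nabla q_h) = b_h(\bfe_h,\nabla q_h) - b_h(\bfxi_h,\nabla q_h)$, where I used $\bfeta_h = \bfe_h - \bfxi_h$. Because $\nabla q_h \in \nabla V^n_h \subset \bfV^e_h$ is curl-free, the $\rot$ contribution to $a_h(\cdot,\cdot)$ drops, so $a_h(\bfe_h,\nabla q_h) = -b_h(\bfe_h,\nabla q_h)$ and $\| \nabla q_h \|_a = \| \nabla q_h \|_b$; hence Lemma \ref{lem_consist} with $\bfv_h=\nabla q_h$ yields $|b_h(\bfe_h,\nabla q_h)| = |a_h(\bfe_h,\nabla q_h)| \lesssim h^\theta\|\bff\|_{\theta',\Omega^\pm}\|\nabla q_h\|_b$. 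The remaining term is handled by the Cauchy--Schwarz inequality for the semi-inner product $b_h(\cdot,\cdot)$ together with the bound on $\| \bfxi_h \|_b$ just obtained, giving $|b_h(\bfxi_h,\nabla q_h)| \le \|\bfxi_h\|_b\|\nabla q_h\|_b \lesssim h^\theta\|\bff\|_{\theta',\Omega^\pm}\|\nabla q_h\|_b$. Dividing through by $\|\nabla q_h\|_b$ gives $\|\nabla q_h\|_b \lesssim h^\theta\|\bff\|_{\theta',\Omega^\pm}$, and assembling the three pieces proves \eqref{lem_err0_eq0}.

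The one point that needs care --- rather than a genuine obstacle --- is that $\bfe_h$ and $\bfxi_h$ are not in $\bfV^e_h$, so one must check that $b_h(\cdot,\cdot)$, $a_h(\cdot,\cdot)$, the Cauchy--Schwarz inequality, and the identity $a_h(\cdot,\nabla q_h) = -b_h(\cdot,\nabla q_h)$ all remain valid on the enlarged space $\bfH^s(\rot;\Omega^\pm)\oplus\bfV^e_h$ with $s>1/2$; this is precisely why $\|\cdot\|_b$ was introduced on that space in \eqref{abnorm}, and the edge trace $\bfu\cdot\bft|_{\partial K}$ is meaningful since $\theta>1/2$. Apart from this bookkeeping the lemma is a short corollary of Lemmas \ref{lem_interp_est} and \ref{lem_consist} and the $b_h$-orthogonality built into \eqref{err_decomp_2}; the substantive estimate, namely bounding $\|\bfr_h\|_b$, is left to the subsequent analysis.
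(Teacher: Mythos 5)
Your proposal is correct and rests on exactly the same ingredients as the paper's proof: the splitting $\bfe_h=\bfxi_h+\bfr_h+\nabla q_h$, the bound on $\|\bfxi_h\|_b$ from \eqref{lem_interp_est_eq02} together with \eqref{fregularity2}, and the application of Lemma~\ref{lem_consist} to $\bfv_h=\nabla q_h$ using that $\nabla q_h$ is curl-free so $\|\nabla q_h\|_a=\|\nabla q_h\|_b$. The one organizational difference is that the paper expands $b_h(\bfe_h,\bfe_h)$ into three cross terms and absorbs $\|\bfe_h\|_b^2$ via Young's inequality (using $\|\nabla q_h\|_b\le\|\bfe_h\|_b+\|\bfxi_h\|_b+\|\bfr_h\|_b$ as an intermediate step), whereas you invoke the $b_h$-orthogonality $b_h(\bfr_h,\nabla q_h)=0$ from the definition of $\bfK_h(\beta_h;\Omega)$ to obtain the cleaner intermediate estimate $\|\nabla q_h\|_b\lesssim h^{\theta}\|\bff\|_{\theta',\Omega^{\pm}}$ directly (no $\|\bfr_h\|_b$ contamination, no absorption), and then conclude by the triangle inequality; both routes are valid and lead to the same conclusion. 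One small bookkeeping note: you correctly have $a_h(\bfe_h,\nabla q_h)=-b_h(\bfe_h,\nabla q_h)$ from \eqref{vem11}, while the paper writes this identity without the minus sign; since only absolute values are used, this sign is immaterial in either argument.
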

  \begin{proof}
We can apply H\"older's inequality to obtain
  \begin{equation}
  \begin{split}
\label{lem_err0_eq1}
b_h(\bfe_h,\bfe_h) & = b_h(\bfe_h, \bfxi_h) +  b_h(\bfe_h, \bfr_h) +  b_h(\bfe_h, \nabla q_h) \\
& \leqslant \| \bfxi_h \|^2_b + \| \bfr_h \|^2_b + \| \bfe_h \|^2_b/2 + |b_h(\bfe_h, \nabla q_h) |.
\end{split}
\end{equation}
The estimate of $\bfxi_h$ is already given by \eqref{lem_interp_est_eq02} in Lemma \ref{lem_interp_est}.
We only need to estimate the last term on the right-hand side of \eqref{lem_err0_eq1}. By Lemma \ref{lem_consist}, as $a_h(\bfe_h,\nabla q_h) = b_h(\bfe_h,\nabla q_h) $, we have
\begin{equation}
\begin{split}
\label{lem_err0_eq2}
 |b_h(\bfe_h, \nabla q_h)| &\lesssim h^{\theta'}  \| \bff \|_{\theta',\Omega^{\pm}}    \| \nabla q_h \|_b  \le  h^{2\theta'}  \| \bff \|^2_{\theta',\Omega^{\pm}}   + ( \| \bfe_h \|^2_b + \| \bfxi_h \|^2_b + \| \bfr_h \|^2_b)/4.
 \end{split}
\end{equation} 
Putting \eqref{lem_err0_eq2} into \eqref{lem_err0_eq1}, and applying the stability \eqref{fregularity2}, we arrive at the desired estimate.
  \end{proof}

  \begin{lemma}
\label{lem_errcurl}
Let $\bfu\in \bfX^{\theta}(\Omega)$ and $\bff\in \bfH^{\theta'}(\Omega^{\pm}) $.
There holds that
\begin{equation}
\label{lem_errcurl_eq0}
\| \rot \bfe_h \|_{0,\Omega} \lesssim h^{\theta}  \| \bff \|_{\theta',\Omega^{\pm}}   +  \| \bfr_h \|_b.
\end{equation}
\end{lemma}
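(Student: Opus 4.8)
The plan is to use the discrete Helmholtz decomposition of $\bfeta_h$ together with the coercivity of the $\rot$-part of $a_h(\cdot,\cdot)$, in exactly the same spirit as the previous two lemmas. First I would note that $\nabla q_h\in\bfV^e_h$ and the bottom row of \eqref{DR_curl} is a complex, so $\rot\nabla q_h=0$; combined with $\bfe_h=\bfxi_h+\bfeta_h=\bfxi_h+\bfr_h+\nabla q_h$ this gives $\rot\bfe_h=\rot\bfxi_h+\rot\bfr_h$, so $q_h$ never enters. The term $\rot\bfxi_h=\rot(\bfu-I^e_h\bfu)$ is already under control: since $\bfu\in\bfX^{\theta}(\Omega)$ entails $\alpha\rot\bfu\in\bfH^{\theta}(\Omega)$, estimate \eqref{lem_interp_est_eq03} of Lemma \ref{lem_interp_est} applies and, with the stability \eqref{fregularity2}, yields $\|\rot\bfxi_h\|_{0,\Omega}\lesssim h^{\theta}\|\rot\bfu\|_{\theta,\Omega^{\pm}}\lesssim h^{\theta}\|\bff\|_{\theta',\Omega^{\pm}}$. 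Thus everything reduces to bounding $\|\rot\bfr_h\|_{0,\Omega}$.

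For $\rot\bfr_h$ I would test the consistency identity with $\bfv_h=\bfr_h\in\bfK_h(\beta_h;\Omega)\subset\bfV^e_h$. Unfolding the definition of $a_h$ gives $(\alpha_h\rot\bfe_h,\rot\bfr_h)_{\Omega}=a_h(\bfe_h,\bfr_h)+b_h(\bfe_h,\bfr_h)$, and using $\rot\bfe_h=\rot\bfxi_h+\rot\bfr_h$ on the left yields
\[
(\alpha_h\rot\bfr_h,\rot\bfr_h)_{\Omega}=a_h(\bfe_h,\bfr_h)+b_h(\bfe_h,\bfr_h)-(\alpha_h\rot\bfxi_h,\rot\bfr_h)_{\Omega}.
\]
Since $\alpha_h$ is a positive piecewise constant bounded below independently of the mesh, the left-hand side is $\gtrsim\|\rot\bfr_h\|^2_{0,\Omega}$. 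On the right, Lemma \ref{lem_consist} gives $a_h(\bfe_h,\bfr_h)\lesssim h^{\theta}\|\bff\|_{\theta',\Omega^{\pm}}\|\bfr_h\|_a$ with $\|\bfr_h\|_a\le\|\rot\bfr_h\|_{0,\Omega}+\|\bfr_h\|_b$; Cauchy--Schwarz and \eqref{lem_interp_est_eq03} give $(\alpha_h\rot\bfxi_h,\rot\bfr_h)_{\Omega}\lesssim h^{\theta}\|\bff\|_{\theta',\Omega^{\pm}}\|\rot\bfr_h\|_{0,\Omega}$; and $b_h(\bfe_h,\bfr_h)\le\|\bfe_h\|_b\|\bfr_h\|_b\lesssim(h^{\theta}\|\bff\|_{\theta',\Omega^{\pm}}+\|\bfr_h\|_b)\|\bfr_h\|_b$ by Lemma \ref{lem_err0}. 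A Young inequality absorbs the $\|\rot\bfr_h\|^2_{0,\Omega}$ contributions into the left-hand side, leaving $\|\rot\bfr_h\|_{0,\Omega}\lesssim h^{\theta}\|\bff\|_{\theta',\Omega^{\pm}}+\|\bfr_h\|_b$; the triangle inequality with the bound on $\rot\bfxi_h$ then gives \eqref{lem_errcurl_eq0}.

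The only slightly delicate point, which I would flag as the main obstacle, is that $\|\rot\bfr_h\|_{0,\Omega}$ reappears on the right through $\|\bfr_h\|_a$ in the consistency estimate and through the $(\alpha_h\rot\bfxi_h,\rot\bfr_h)_{\Omega}$ term, so one must be sure it can be absorbed: this works because of the quadratic structure and because the coercivity constant of the $\rot$-part of $a_h$ (namely $\inf\alpha_h>0$) is mesh-independent, so it does not compete with the hidden constants. Everything else is bookkeeping with the already-established Lemmas \ref{lem_consist}, \ref{lem_err0} and \ref{lem_interp_est}.
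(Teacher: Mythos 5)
Your proof is correct and uses the same toolkit as the paper's: the discrete Helmholtz decomposition \eqref{err_decomp_2}, the consistency estimate of Lemma \ref{lem_consist}, the interpolation estimate \eqref{lem_interp_est_eq03}, Lemma \ref{lem_err0}, and a Young-inequality absorption. The only organizational difference is that you first use $\rot\nabla q_h=0$ to reduce the target to $\|\rot\bfr_h\|_{0,\Omega}$ and then test the consistency identity with $\bfv_h=\bfr_h$, whereas the paper starts from the identity $\|\alpha_h^{1/2}\rot\bfe_h\|_0^2=a_h(\bfe_h,\bfe_h)+\|\bfe_h\|_b^2$, tests with $\bfeta_h$, and must therefore separately estimate the extra term $b_h(\bfe_h,\nabla q_h)=-a_h(\bfe_h,\nabla q_h)$ via Lemma \ref{lem_consist}; your route avoids that term, which is a modest simplification but not a different method.
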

\begin{proof}
Consider the following identity from \eqref{err_decomp_2}:
\begin{equation}
\begin{split}
\label{lem_errcurl_eq1}
\| \alpha^{1/2}_h \rot \bfe_h \|^2_0 &= a_h(\bfe_h,\bfe_h) + \| \bfe_h \|^2_{b}  = a_h(\bfe_h, \bfxi_h) + a_h(\bfe_h,\bfeta_h) + b_h(\bfe_h,\bfxi_h+\bfeta_h) \\
& = (\alpha^{1/2}_h \rot \bfe_h, \rot \bfxi_h)_{\Omega} +  a_h(\bfe_h,\bfeta_h)  + b_h(\bfe_h,\bfr_h)  + b_h(\bfe_h, \nabla q_h).
\end{split}
\end{equation}
Lemma \ref{lem_consist} yields
\begin{subequations}
\label{lem_errcurl_eq2}
\begin{align}
    &  a_h(\bfe_h,\bfeta_h) \lesssim  h^{\theta} \| \bff \|_{\theta',\Omega^{\pm}}    ( \| \rot \bfe_h \|_{0,\Omega} + \| \bfe_h \|_{b} + \| \bfxi_h \|_{a} ). \\
    &  b_h(\bfe_h, \nabla q_h) = a_h(\bfe_h, \nabla q_h)  \lesssim  h^{\theta} \| \bff \|_{\theta',\Omega^{\pm}}  ( \| \bfe_h \|_b + \| \bfxi_h \|_b + \| \bfr_h \|_b  ).
\end{align}
\end{subequations}
In addition, $b_h(\bfe_h,\bfr_h) \le \| \bfe_h \|_b \| \bfr_h \|_b$. Substituting this estimate and \eqref{lem_errcurl_eq2} into \eqref{lem_errcurl_eq1}, applying the similar argument to \eqref{lem_err0_eq2}, and using Lemma \ref{lem_err0}, we have \eqref{lem_errcurl_eq0}.
\end{proof}
 
   Now, based on the Lemmas \ref{lem_errcurl} and \ref{lem_err0}, it remains to estimate $\bfr_h$. 
   \begin{lemma}
   \label{lem_rh}
 Let $\bfu\in \bfX^{\theta}(\Omega)$ and let $\bff\in \bfH^{\theta'}(\Omega^{\pm})$.  There holds that
   \begin{equation}
\label{lem_rh_eq0}
\| \bfr_h \|_b  \lesssim h^{\theta}\| \bff \|_{\theta',\Omega^{\pm}}. 
\end{equation}
   \end{lemma}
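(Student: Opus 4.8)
The quantity to control is $\|\bfr_h\|_b$ where $\bfr_h\in\bfK_h(\beta_h;\Omega)$ is the discrete-divergence-free part of $\bfeta_h=I^e_h\bfu-\bfu_h$. The natural route is a duality argument, since $\bfr_h$ lives in the virtual space and carries no extra $\rot$ information. I would introduce the Hodge map $\mathcal{H}_\beta\bfr_h\in\bfK(\beta;\Omega)$ from \eqref{hodge_map}, which satisfies $\rot\mathcal{H}_\beta\bfr_h=\rot\bfr_h$ and, by Lemma~\ref{lem_Hodge_est}, $\|\mathcal{H}_\beta\bfr_h\|_{\theta,\Omega^\pm}\lesssim\|\rot\bfr_h\|_{0,\Omega}$ and $\|\mathcal{H}_\beta\bfr_h-\bfr_h\|_b\lesssim h^\theta\|\rot\bfr_h\|_{0,\Omega}$. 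The key identity is that for $\bfw\in\bfK(\beta;\Omega)$, coercivity/inf-sup of the continuous problem \eqref{model2} on the $\beta$-div-free space (this is the standard Gårding-type estimate for the indefinite Maxwell problem; the solution operator is bounded on $\bfK(\beta;\Omega)$) gives $\|\bfr_h\|_b^2\approx(\beta_h\bfr_h,\bfr_h)$, and one wants to turn $(\beta_h\bfr_h,\bfr_h)$ into $a(\,\cdot\,,\,\cdot\,)$-terms that can be fed to Lemma~\ref{lem_consist_a}.

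Concretely, I would write $\|\bfr_h\|_b^2 = b_h(\bfr_h,\bfr_h)$ and, using $\bfr_h\in\bfK_h(\beta_h;\Omega)$ plus the decomposition $\bfeta_h=\bfr_h+\nabla q_h$, reduce to estimating $b_h(\bfeta_h,\bfr_h)$. Then split $\bfr_h = (\bfr_h-\mathcal{H}_\beta\bfr_h) + \mathcal{H}_\beta\bfr_h$: the first piece is $O(h^\theta\|\rot\bfr_h\|_{0,\Omega})$ in $\|\cdot\|_b$ by Lemma~\ref{lem_Hodge_est}, and since $\|\rot\bfr_h\|_{0,\Omega}=\|\rot\bfeta_h\|_{0,\Omega}\le\|\rot\bfe_h\|_{0,\Omega}+\|\rot\bfxi_h\|_{0,\Omega}$, Lemma~\ref{lem_errcurl} and \eqref{lem_interp_est_eq03} bound it by $h^\theta\|\bff\|_{\theta',\Omega^\pm}+\|\bfr_h\|_b$ — a term that can be absorbed after multiplying by $h^\theta$. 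For the main piece $\mathcal{H}_\beta\bfr_h\in\bfK(\beta;\Omega)$, I would use it as the test direction: because $\mathcal{H}_\beta\bfr_h$ is exactly $\beta$-div-free and $\rot$-free obstructions vanish, $b_h(\bfeta_h,\mathcal{H}_\beta\bfr_h)$ is (up to $L^2$-projection and coefficient-mismatch terms handled by Lemma~\ref{lem_Pi_est} and Lemma~\ref{lem_strip}) close to $-(\beta\bfe_h,\mathcal{H}_\beta\bfr_h)_\Omega$ modulo $b_h(\bfxi_h,\cdot)$ and $a_h(\bfeta_h,\cdot)$ contributions. One then recognizes $(\beta\bfe_h,\mathcal{H}_\beta\bfr_h)_\Omega = (\alpha\rot\bfe_h,\rot\mathcal{H}_\beta\bfr_h)_\Omega - a(\bfe_h,\mathcal{H}_\beta\bfr_h)$; the first term is $\lesssim\|\rot\bfe_h\|_{0,\Omega}\|\rot\bfr_h\|_{0,\Omega}$, and for the second I invoke $a(\bfe_h,\mathcal{H}_\beta\bfr_h)=a(\bfe_h,I^e_h\mathcal{H}_\beta\bfr_h)+a(\bfe_h,\mathcal{H}_\beta\bfr_h-I^e_h\mathcal{H}_\beta\bfr_h)$, applying Lemma~\ref{lem_consist_a} to the first with $\bfv=\mathcal{H}_\beta\bfr_h$ (valid since $\mathcal{H}_\beta\bfr_h\in\bfX^\theta(\Omega)$ by Lemma~\ref{lem_embd_1}, as $\alpha\rot\mathcal{H}_\beta\bfr_h=\alpha\rot\bfr_h\in\mathbb{P}_0$ elementwise hence in $\bfH^\theta(\Omega)$ after noting the discrete $\rot$ is globally in $Q_h$ — one needs $\rot\bfr_h$ to actually have $\bfH^\theta$ regularity across $\Gamma$, which may require a small separate argument or restriction), and the interpolation-error piece bounded by Lemma~\ref{lem_interp_est} together with the already-available control of $a_h(\bfe_h,\cdot)$ from Lemma~\ref{lem_consist}.

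Assembling, every term carries at least one factor $h^\theta$ times either $\|\bff\|_{\theta',\Omega^\pm}$ or a quantity ($\|\bfe_h\|_b$, $\|\rot\bfe_h\|_{0,\Omega}$, $\|\bfr_h\|_b$) which by Lemmas~\ref{lem_err0} and \ref{lem_errcurl} is itself $\lesssim h^\theta\|\bff\|_{\theta',\Omega^\pm}+\|\bfr_h\|_b$; the stray $\|\bfr_h\|_b$ appears always with an $h^\theta$ prefactor or inside a product that yields $\tfrac12\|\bfr_h\|_b^2$ after Young's inequality, so it gets absorbed into the left-hand side, leaving $\|\bfr_h\|_b\lesssim h^\theta\|\bff\|_{\theta',\Omega^\pm}$. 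The main obstacle I anticipate is twofold: first, justifying that $\mathcal{H}_\beta\bfr_h$ genuinely belongs to $\bfX^\theta(\Omega)$ so that Lemma~\ref{lem_consist_a} applies — this hinges on $\rot\bfr_h$ (a piecewise constant) having enough regularity relative to $\Gamma$, which is delicate precisely because the mesh is only fitted to $\Gamma_h$, not $\Gamma$, so one likely routes through $\rot\mathcal{H}_\beta\bfr_h=\rot\bfr_h\in L^2$ and uses that $\mathcal{H}_\beta\bfr_h\in\bfK(\beta;\Omega)$ directly in a modified continuous estimate rather than literally citing Lemma~\ref{lem_consist_a}; second, the bookkeeping of coefficient-mismatch terms $(\beta-\beta_h)(\cdot)$ and $(\alpha-\alpha_h)(\cdot)$ supported in $\Omega^\Gamma_{\epsilon h^2}$, which must all be shown $O(h^{1+\cdots})$ via Lemma~\ref{lem_strip} and hence harmless — routine but needs the low-regularity $\bfH^\theta$ norm on the right and the $h^2$-thinness of the strip to beat the $h^{-1}$ in the stabilization.
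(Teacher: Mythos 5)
Your opening steps are aligned with the paper: decompose via the Hodge mapping, use $\|\bfr_h-\mathcal{H}_\beta\bfr_h\|_b\lesssim h^\theta\|\rot\bfr_h\|_{0,\Omega}$, control $\|\rot\bfr_h\|$ by Lemma~\ref{lem_errcurl} plus \eqref{lem_interp_est_eq03}, and absorb $Ch^\theta\|\bfr_h\|_b$ at the end. However, your treatment of the main piece $\mathcal{H}_\beta\bfr_h$ has a gap that is structural, not merely technical, and it is exactly the step where the paper's proof differs decisively.

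After rewriting $(\beta\bfe_h,\mathcal{H}_\beta\bfr_h)_\Omega = (\alpha\rot\bfe_h,\rot\mathcal{H}_\beta\bfr_h)_\Omega - a(\bfe_h,\mathcal{H}_\beta\bfr_h)$, you estimate the first term by $\|\rot\bfe_h\|_{0,\Omega}\|\rot\bfr_h\|_{0,\Omega}$. Both factors are only controlled, via Lemma~\ref{lem_errcurl} and $\rot\bfr_h=\rot\bfeta_h$, by $h^\theta\|\bff\|_{\theta',\Omega^\pm}+\|\bfr_h\|_b$, so this contributes $\|\bfr_h\|_b^2$ to the right-hand side with an $\mathcal{O}(1)$ constant, which cannot be absorbed into the left-hand side. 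There is no extra factor of $h^\theta$ anywhere in that term, so ``every term carries at least one factor $h^\theta$'' is false for this piece. Separately, your invocation of Lemma~\ref{lem_consist_a} with $\bfv=\mathcal{H}_\beta\bfr_h$ is not valid: that lemma requires $\bfv\in\bfX^\theta(\Omega)$, i.e.\ $\alpha\rot\bfv\in\bfH^\theta(\Omega)$, but $\rot\mathcal{H}_\beta\bfr_h=\rot\bfr_h$ is a mesh-piecewise constant, which is \emph{not} in $H^\theta(\Omega)$ for $\theta>1/2$ (it jumps across element edges, not just across $\Gamma$). You flag this yourself as an ``anticipated obstacle,'' but it is a genuine failure of the route, not something that can be patched by a small separate argument.

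The paper avoids both problems by inserting a dual/adjoint problem: it solves $\brot\alpha\rot\bfw-\beta\bfw=\beta\mathcal{H}_\beta\bfr_h$ in $\bfH_0(\rot;\Omega)\cap\bfH(\beta,\ddiv;\Omega)$, with regularity $\|\bfw\|_{\theta,\rot,\Omega^\pm}\lesssim\|\mathcal{H}_\beta\bfr_h\|_{0,\Omega}$, and tests with $\mathcal{H}_\beta\bfr_h$ to get the clean identity $\|\sqrt\beta\mathcal{H}_\beta\bfr_h\|_{0,\Omega}^2=a(\bfw,\mathcal{H}_\beta\bfr_h)$. Since $\mathcal{H}_\beta\bfr_h-\bfeta_h\in\nabla H^1_0(\Omega)$ and $a(\bfw,\nabla q)=0$ for the $\beta$-div-free dual source, this reduces to $a(\bfw,\bfeta_h)=a(\bfw-I^e_h\bfw,\bfe_h)+a(I^e_h\bfw,\bfe_h)-a(\bfw,\bfxi_h)$, and now Lemma~\ref{lem_consist_a} is applied with $\bfv=\bfw$ (which legitimately lies in $\bfX^\theta(\Omega)$ by the dual regularity, not with the non-admissible $\mathcal{H}_\beta\bfr_h$), while the interpolation-error pieces pick up $h^\theta\|\bfw\|_{\theta,\rot,\Omega^\pm}$. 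Cancelling one power of $\|\mathcal{H}_\beta\bfr_h\|_{0,\Omega}$ via the dual stability is what generates the needed extra $h^\theta$ and eliminates the uncontrolled $(\alpha\rot\bfe_h,\rot\bfr_h)$ term that stalls your argument. In short, the Aubin--Nitsche duality step is the missing idea; without it, your bound terminates at $\|\bfr_h\|_b^2\lesssim h^{2\theta}\|\bff\|^2+\|\bfr_h\|_b^2$, which proves nothing.
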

   \begin{proof}
 We consider the Hodge mapping defined in \eqref{hodge_map}. As $\bfr_h\in \bfK_h(\beta;\Omega)$, from \eqref{lem_interp_est_eq03} in Lemma \ref{lem_interp_est}, and using Lemmas \ref{lem_Hodge_est} and \ref{lem_errcurl}, we have
   \begin{equation}
   \begin{split}
\label{lem_rh_eq1}
\| \bfr_h - \mathcal{H}_{\beta} \bfr_h \|_{b}& \lesssim h^{\theta} \| \rot \bfr_h \|_{0,\Omega} =  h^{\theta} \left( \| \rot \bfe_h \|_{0,\Omega} + \| \rot \bfxi_h \|_{0,\Omega}  \right)   \le  h^{2\theta}  \| \bff \|_{\theta',\Omega^{\pm}}    +   h^{\theta}  \| \bfr_h \|_b  .
\end{split}
\end{equation}
Thus, there exists a constant $C$ such that
\begin{equation}
\label{lem_rh_eq2}
(1 - C h^{\theta})\| \bfr_h \|_b \lesssim h^{2\theta}  \| \bff \|_{\theta',\Omega^{\pm}}   + \| \mathcal{H}_{\beta} \bfr_h \|_{b}.
\end{equation}
We now use a duality argument to estimate $\| \mathcal{H}_{\beta} \bfr_h \|_{b}$. Consider the following auxiliary problem of $\bfw \in \bfH_0(\rot;\Omega)\cap\bfH(\beta,\ddiv;\Omega)$:
\begin{equation}
\label{lem_rh_eq3}
\brot \alpha \rot \bfw - \beta \bfw = \beta \mathcal{H}_{\beta} \bfr_h.
\end{equation}
As the problem has the same coefficients as the model problem, the following stability holds:
\begin{equation}
\label{lem_rh_eq3_1}
\| \bfw \|_{\theta,\rot,\Omega^{\pm}} \lesssim \| \mathcal{H}_{\beta} \bfr_h \|_{0,\Omega}.
\end{equation}
Testing \eqref{lem_rh_eq3} by $\bfv\in\bfH_0(\rot;\Omega)$, we have
\begin{equation}
\label{lem_rh_eq4}
a(\bfw,\bfv) = (\alpha\rot \bfw, \rot \bfv)_{\Omega} - (\beta \bfw, \bfv)_{\Omega} = (\beta \mathcal{H}_{\beta} \bfr_h,\bfv)_{\Omega}.
\end{equation}
Then, as $\mathcal{H}_{\beta} \bfr_h\subset\bfK(\beta;\Omega)$ given in \eqref{div_free}, there holds
\begin{equation}
\label{lem_rh_eq5}
a(\bfw,\nabla q) = (\beta \bfw, \nabla q)_{\Omega} = 0, ~~~ \forall q \in H^1_0(\Omega).
\end{equation}
As $\rot(\mathcal{H}_{\beta} \bfr_h) = \rot(\bfr_h) = \rot(\bfeta_h)$, by the exact sequence, $\mathcal{H}_{\beta} \bfr_h -\bfeta_h \in \nabla H^1_0(\Omega)$. So, \eqref{lem_rh_eq5} implies
\begin{equation}
\label{lem_rh_eq6}
a(\bfw,\mathcal{H}_{\beta} \bfr_h -\bfeta_h) = (\beta \bfw, \mathcal{H}_{\beta} \bfr_h -\bfeta_h)_{\Omega} = 0.
\end{equation}
In addition, taking $\bfv =\mathcal{H}_{\beta} \bfr_h $ in \eqref{lem_rh_eq4} and using \eqref{lem_rh_eq6} yields
\begin{equation}
\begin{split}
\label{lem_rh_eq7}
\| \sqrt{\beta} \mathcal{H}_{\beta} \bfr_h \|^2_{0,\Omega} & = a(\bfw, \mathcal{H}_{\beta} \bfr_h) = a(\bfw, \mathcal{H}_{\beta} \bfr_h - \bfeta_h) + a(\bfw, \bfeta_h) \\
& = a(\bfw - I^e_h \bfw, \bfe_h ) + a(I^e_h \bfw, \bfe_h ) - a(\bfw, \bfxi_h) := (I) + (II) + (III).
\end{split}
\end{equation}
We proceed to estimate each term above.
For $(I)$, \eqref{lem_interp_est_eq01} and \eqref{lem_interp_est_eq03} in Lemma \ref{lem_interp_est} imply
\begin{equation}
\label{lem_rh_eq8}
(I) \lesssim \| \bfe_h \|_{\rot,\Omega}  \| \bfw - I^e_h \bfw \|_{\rot,\Omega} \lesssim h^{\theta} \| \bfw \|_{\theta,\rot;\Omega^{\pm}}  \| \bfe_h \|_{\rot,\Omega}.
\end{equation}
The estimate of $(II)$ follows from Lemma \ref{lem_consist_a}, while $(III)$ follows from Lemma \ref{lem_interp_est} again.
Putting these estimates into \eqref{lem_rh_eq7}, using \eqref{lem_rh_eq3_1} and cancelling $\|\mathcal{H}_{\beta}\bfr_h\|_{0,\Omega}$ in \eqref{lem_rh_eq10} leads to 
\begin{equation}
\begin{split}
\label{lem_rh_eq10}
\|  \mathcal{H}_{\beta} \bfr_h \|_{0,\Omega} 
 \lesssim h^{\theta} ( \| \bff \|_{\theta';\Omega^{\pm}} +  \| \bfu_h \|_{b}  + \| \bfe_h \|_{\rot;\Omega}  ). 
\end{split}
\end{equation}
Here, special attention needs to be paid to $\| \bfe_h \|_{\rot,\Omega}$ which should be bounded in terms of $\| \bfe_h\|_{a}$. Note that $\bfe_h = \bfxi_h + \bfeta_h$, and as $\bfeta_h \in \bfV^e_h$, by \eqref{lem_L2stab_eq01} in Lemma \ref{lem_L2stab} and Lemma \ref{lem_interp_est} there holds
\begin{equation}
\label{lem_rh_eq10_1}
\| \bfe_h \|_{0,\Omega} \lesssim \| \bfxi_h \|_{0,\Omega} + \| \bfeta_h \|_{a} \lesssim  h^{\theta}  \| \bff \|_{\theta';\Omega^{\pm}}  + \| \bfe_h \|_a.
\end{equation}
In addition, we have $\| \bfu_h \|_b \le \| \bfe_h \|_b + \| \bfu \|_b$. The estimate of $\| \bfu \|_b$ follows from \eqref{lem_L2stab_eq02} in Lemma \ref{lem_L2stab}. We then bound $\| \bfe_h \|_{b} $ and $\| \rot \bfe_h \|_{0,\Omega} $ by Lemmas \ref{lem_err0} and \ref{lem_errcurl}. Hence, \eqref{lem_rh_eq10} becomes $\| \mathcal{H}_{\beta} \bfr_h \|_{0,\Omega}  \lesssim h^{\theta} ( \| \bff \|_{\theta';\Omega^{\pm}} + \| \bfr_h \|_b )$. Applying \eqref{lem_L2stab_eq02} in Lemma \ref{lem_L2stab} and \eqref{lem_Hodge_est_eq01} in Lemma \ref{lem_Hodge_est}, we achieve
\begin{equation}
\begin{split}
\label{lem_rh_eq11}
\|  \mathcal{H}_{\beta} \bfr_h \|_{b} & \lesssim \| \mathcal{H}_{\beta} \bfr_h \|_{0,\Omega} + h^{\theta} \| \rot \bfr_h \|_{0,\Omega}  \lesssim h^{\theta} ( \| \bff \|_{\theta';\Omega^{\pm}} + \| \bfr_h \|_b + \| \rot \bfr_h \|_{0,\Omega} ) \\
& \lesssim h^{\theta} ( \| \bff \|_{\theta';\Omega^{\pm}} + \| \bfr_h \|_b + \| \rot \bfe_h \|_{0,\Omega} + \| \rot \bfxi_h \|_{0,\Omega} )   \lesssim h^{\theta} ( \| \bff \|_{\theta';\Omega^{\pm}} + \| \bfr_h \|_b )
\end{split}
\end{equation}
where we have also used Lemma \ref{lem_errcurl} and \eqref{lem_interp_est_eq03} in the last inequality.
Hence,  from \eqref{lem_rh_eq2}, assuming that $h$ is sufficiently small, using $h^{\theta'}\lesssim 1$, we obtain the desired result.
  \end{proof}
 Finally, we can achieve the optimal error estimates in the following lemma.
 \begin{lemma}
 \label{lem_error}
  Let $\bfu\in \bfX^{\theta}(\Omega)$ and let $\bff\in \bfH^{\theta'}(\Omega^{\pm})$.  Then, there holds
 \begin{equation}
\label{lem_error_eq0}
\| \bfe_h \|_{\rot;\Omega} \lesssim h^{\theta} \| \bff \|_{\theta',\Omega^{\pm}}.
\end{equation}
 \end{lemma}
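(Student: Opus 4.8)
The strategy is straightforward assembly: all the substantive work has already been deposited in Lemmas \ref{lem_err0}, \ref{lem_errcurl}, and \ref{lem_rh} (the last of which contains the duality argument, the genuinely hard step). First I would invoke Lemma \ref{lem_rh} to record the optimal bound
$\| \bfr_h \|_b \lesssim h^{\theta}\| \bff \|_{\theta',\Omega^{\pm}}$
for the $\bfK_h(\beta_h;\Omega)$-component of $\bfeta_h$ appearing in the discrete Helmholtz decomposition \eqref{err_decomp_2}. This is the key input: once $\bfr_h$ is controlled, the remaining pieces of $\bfe_h$ follow mechanically.

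Next I would feed this estimate into Lemma \ref{lem_err0} and Lemma \ref{lem_errcurl}. Lemma \ref{lem_err0} gives
$\| \bfe_h \|_b \lesssim h^{\theta}\| \bff \|_{\theta',\Omega^{\pm}} + \| \bfr_h \|_b \lesssim h^{\theta}\| \bff \|_{\theta',\Omega^{\pm}}$,
and Lemma \ref{lem_errcurl} gives, in the same way,
$\| \rot \bfe_h \|_{0,\Omega} \lesssim h^{\theta}\| \bff \|_{\theta',\Omega^{\pm}}$.
Together these yield $\| \bfe_h \|_a = (\| \rot \bfe_h \|^2_{0,\Omega} + \| \bfe_h \|_b^2)^{1/2} \lesssim h^{\theta}\| \bff \|_{\theta',\Omega^{\pm}}$.

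Finally I would apply Lemma \ref{lem_L2}, which bounds the full $L^2$ error by the $a$-norm up to a consistency term, namely $\| \bfe_h \|_{0,\Omega} \lesssim h^{\theta}\| \bff \|_{\theta',\Omega^{\pm}} + \| \bfe_h \|_a \lesssim h^{\theta}\| \bff \|_{\theta',\Omega^{\pm}}$. Combining the $L^2$ bound on $\bfe_h$ with the $\rot$ bound on $\bfe_h$ and using $\| \bfe_h \|_{\rot;\Omega}^2 = \| \bfe_h \|_{0,\Omega}^2 + \| \rot \bfe_h \|_{0,\Omega}^2$ gives the claimed estimate.

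As for obstacles: there is essentially none left at this stage of the argument — the proof is a short chain of citations to the preceding lemmas. The only minor point of care is to ensure the $h$-powers all collapse to $h^{\theta}$ (using $\theta' \ge \theta$ and $h \lesssim 1$) and that $\| \bfu_h \|_b$, which appears implicitly through some of the cited lemmas, has already been absorbed via $\| \bfu_h \|_b \le \| \bfe_h \|_b + \| \bfu \|_b$ together with the stability bound \eqref{lem_L2stab_eq02} on $\| \bfu \|_b$; this bookkeeping was already carried out inside the proof of Lemma \ref{lem_rh}, so nothing new is required here.
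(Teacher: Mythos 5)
Your proof is correct and follows exactly the route the paper implicitly intends: the paper states Lemma \ref{lem_error} without a written proof because it is an immediate assembly of Lemmas \ref{lem_rh}, \ref{lem_err0}, \ref{lem_errcurl}, and \ref{lem_L2}, which is precisely the chain you spell out. Your bookkeeping (substituting the $\| \bfr_h \|_b$ bound into the $b$-norm and $\rot$ estimates, forming $\| \bfe_h \|_a$, then applying Lemma \ref{lem_L2} for the $L^2$ part, and noting that $\| \bfu_h \|_b$ was already absorbed inside Lemma \ref{lem_rh}) is accurate.
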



  \section{Proofs of Lemmas \ref{lem_Pi_est} and \ref{lem_interp_est}}
  \label{sec:lemmaproof}

In this section, we verify Lemmas \ref{lem_Pi_est} and \ref{lem_interp_est} under the geometric assumptions specified in Subsection \ref{subsec:geo}. 
The following approximation result regarding the projection operator on general domain will be frequently used.
\begin{lemma}
\label{lem_proj_approxi}
Let $D$ be an open domain. For all $v\in H^{s}(D)$ with $s\in(0,1)$ and $p\ge 1$, there holds
\begin{align}
\label{lem_proj_approxi_eq01}
   \| v - \Pi_D v \|_{L^2(D)} \le h^{s+1}_D |D|^{-1/2} |v|_{s,D}.
\end{align}
\end{lemma}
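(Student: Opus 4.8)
The plan is to use that $\Pi_D$ here is the $L^2$-projection onto $\mathbb{P}_0(D)$, i.e.\ the mean value $\Pi_D v=|D|^{-1}\int_D v$, so that $v-\Pi_D v$ can be written as a double average of increments $v(\bfx)-v(\bfy)$; then Cauchy--Schwarz turns the $L^2$ norm into a double integral of $|v(\bfx)-v(\bfy)|^2$, and the crude bound $|\bfx-\bfy|\le h_D$ on $D\times D$ converts this into the Gagliardo seminorm $|v|_{s,D}$ from \eqref{fracSobolev}. The whole point is to carry out these steps so that no constant worse than $1$ appears, since this sharp bookkeeping of $|D|$ versus $h_D$ is exactly what later permits anisotropic, shrinking elements. (The parameter $p$ plays no role in this lowest-order estimate.)

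First I would note that for a.e.\ $\bfx\in D$,
\[
v(\bfx)-\Pi_D v=\frac{1}{|D|}\int_D\bigl(v(\bfx)-v(\bfy)\bigr)\dd\bfy ,
\]
so by the Cauchy--Schwarz (Jensen) inequality applied to the average in $\bfy$,
\[
|v(\bfx)-\Pi_D v|^2\le\frac{1}{|D|}\int_D|v(\bfx)-v(\bfy)|^2\dd\bfy .
\]
Integrating over $\bfx\in D$ gives $\|v-\Pi_D v\|_{0,D}^2\le |D|^{-1}\int_D\int_D|v(\bfx)-v(\bfy)|^2\dd\bfy\dd\bfx$.

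Next I would write $|v(\bfx)-v(\bfy)|^2=\bigl(|v(\bfx)-v(\bfy)|^2/|\bfx-\bfy|^{2(1+s)}\bigr)\,|\bfx-\bfy|^{2(1+s)}$ and use $|\bfx-\bfy|\le h_D$ for $\bfx,\bfy\in D$, which bounds the last factor by $h_D^{2(1+s)}$; recognizing the definition of $|v|_{s,D}$ then yields $\|v-\Pi_D v\|_{0,D}^2\le h_D^{2(1+s)}|D|^{-1}|v|_{s,D}^2$, and taking square roots gives \eqref{lem_proj_approxi_eq01}. There is essentially no obstacle here; the only points requiring a little care are that the exponent $2(1+s)$ in the seminorm is exactly $d+2s$ for the ambient dimension $d=2$, so the pointwise bound $|\bfx-\bfy|\le h_D$ is applied precisely to the power occurring in the denominator, and that the Cauchy--Schwarz step is used in the direction producing the constant $1$ rather than a factor $|D|$.
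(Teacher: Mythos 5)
Your proof is correct, and the bookkeeping is precise: the Jensen/Cauchy--Schwarz step applied to the average $|D|^{-1}\int_D(\cdot)\dd\bfy$ produces exactly the factor $|D|^{-1}$, and the exponent $2(1+s)=d+2s$ for $d=2$ in \eqref{fracSobolev} lets the bound $|\bfx-\bfy|\le h_D$ pull out precisely $h_D^{2(1+s)}$, giving the stated inequality with constant $1$. You are also right that the parameter $p$ is vestigial and plays no role.

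The paper itself does not prove the lemma; it simply cites Lemma~7.1 of Ern--Guermond (2017). Your argument is the standard self-contained proof of that fractional Poincar\'e-type estimate and is, in substance, what the cited reference does, so this is not a genuinely different route --- it is the proof the paper chose to outsource. If anything, writing it out as you did has the merit of making visible that no shape-dependent or dimension-dependent constant enters, which is precisely what the subsequent anisotropic analysis (Section~\ref{sec:lemmaproof}) leans on when it allows $|D|$ to be much smaller than $h_D^2$.
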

\begin{proof}
See \cite[Lemma 7.1]{2017ErnGuermond} for \eqref{lem_proj_approxi_eq01}.
\end{proof}
We also recall the following theorem from \cite{2015Zhou} regarding extensions of fractional Sobolev spaces.
\begin{theorem}
\label{thm_extension}
For any domain $D$ being regular in the sense of \eqref{Dregular}, there exists a continuous extension operator 
$$
\mathcal{E}_D : H^s(D) \longrightarrow H^s(\mathbb{R}^2) .
$$
\end{theorem}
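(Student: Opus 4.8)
The plan is to prove this classical fact---that the measure--density (plumpness) condition \eqref{Dregular} is sufficient for $D$ to be an $H^{s}$--extension domain for every $s\in(0,1)$---by the Whitney--reflection construction of Haj\l asz--Koskela--Tuominen, in the quantitative form of \cite{2015Zhou}. First I would fix a Whitney decomposition $\{Q_j\}$ of the open set $\mathbb{R}^2\setminus\overline D$ into dyadic cubes with $\mathrm{diam}(Q_j)\le\dist(Q_j,\partial D)\le 4\,\mathrm{diam}(Q_j)$, mildly dilated cubes $Q_j^{\ast}$ of bounded overlap, and a subordinate smooth partition of unity $\{\varphi_j\}$ with $\sum_j\varphi_j\equiv 1$ on $\mathbb{R}^2\setminus\overline D$ and $|\nabla\varphi_j|\lesssim\mathrm{diam}(Q_j)^{-1}$. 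To each $Q_j$ I attach a ``reflected ball'' $B_j=B(z_j,c\,\mathrm{diam}(Q_j))$ with $z_j$ a point of $\overline D$ at distance $\lesssim\mathrm{diam}(Q_j)$ from $Q_j$; the decisive point is that \eqref{Dregular} forces $|B_j\cap D|\gtrsim\mathrm{diam}(Q_j)^2$, so the averages $a_j:=|B_j\cap D|^{-1}\int_{B_j\cap D}u\,\dd x$ are well defined. I then set $\mathcal{E}_D u:=u$ on $D$ and $\mathcal{E}_D u:=\sum_j\varphi_j a_j$ on $\mathbb{R}^2\setminus\overline D$, after first discarding the cubes with $\dist(Q_j,\partial D)>1$ (so that \eqref{Dregular} is applied only with $r\le 1$) and filling in by $0$ far from $\partial D$ through one additional partition of unity. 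Linearity of $\mathcal{E}_D$ is clear, and $\mathcal{E}_D u=u$ a.e.\ on $D$ by construction.

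The only analytic ingredient beyond the elementary geometry of Whitney cubes is a fractional Poincar\'e inequality on truncated balls: for any ball $B=B(z,r)$ with $|B\cap D|\ge c_0 r^2$, writing $a_B:=|B\cap D|^{-1}\int_{B\cap D}u\,\dd x$, Jensen's inequality together with $|x-y|\le 2r$ on $B\cap D$ gives
\[
\int_{B\cap D}|u-a_B|^2\,\dd x \;\le\; \frac{(2r)^{2+2s}}{|B\cap D|}\,|u|_{s,\,B\cap D}^2 \;\lesssim\; r^{2s}\,|u|_{s,\,B\cap D}^2 ,
\]
with the implied constant depending only on $s$ and $c_0$. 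Armed with this, I would estimate $\|\mathcal{E}_D u\|_{s,\mathbb{R}^2}$ by splitting the Gagliardo double integral over $\mathbb{R}^2\times\mathbb{R}^2$ into the four regions $D\times D$, $(\mathbb{R}^2\setminus D)\times(\mathbb{R}^2\setminus D)$, and the two mixed pieces. The contribution of $D\times D$ is exactly $|u|_{s,D}^2$; the bound $\|\mathcal{E}_D u\|_{0,\mathbb{R}^2}\lesssim\|u\|_{0,D}$ follows from Jensen, $|B_j\cap D|\approx\mathrm{diam}(Q_j)^2$, and the bounded overlap of $\{B_j\cap D\}$.

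The substance of the argument lies in the two remaining regions. For $x\notin\overline D$, $y\in D$ I would use $\sum_j\varphi_j(x)=1$ to write $\mathcal{E}_Du(x)-u(y)=\sum_{j:\,x\in Q_j^{\ast}}\varphi_j(x)\,(a_j-u(y))$, so $|\mathcal{E}_Du(x)-u(y)|^2\lesssim\sum_{j:\,x\in Q_j^{\ast}}|a_j-u(y)|^2$; for $x,y\notin\overline D$ I would instead use $\sum_j(\varphi_j(x)-\varphi_j(y))=0$ to write $\mathcal{E}_Du(x)-\mathcal{E}_Du(y)=\sum_j(\varphi_j(x)-\varphi_j(y))(a_j-a_{j_0})$ for a fixed $j_0$ with $x\in Q_{j_0}^{\ast}$, and then telescope $a_j-a_{j_0}$ along a chain of comparably many (in fact $O(\log)$) neighboring Whitney cubes whose reflected balls are nested up to bounded dilation. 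Each increment along such a chain is controlled by the displayed fractional Poincar\'e inequality on a common enlarged ball, and the geometric decay of the weight $|x-y|^{-(2+2s)}$ together with $|\nabla\varphi_j|\lesssim\mathrm{diam}(Q_j)^{-1}$ makes the resulting multiscale sums over Whitney cubes converge to $\lesssim|u|_{s,D}^2+\|u\|_{0,D}^2$. Summing the four contributions yields $\|\mathcal{E}_D u\|_{s,\mathbb{R}^2}\lesssim\|u\|_{s,D}$ with constant depending only on $s$ and $c_0$. I expect the main obstacle to be precisely the bookkeeping in this last chaining step: tracking how the number of chain links and the radii of the intermediate reflected balls grow with $|x-y|$, and verifying that the weighted sums over the countably many, multiscale Whitney cubes telescope without loss---this is exactly where $0<s<1$ and the lower bound in \eqref{Dregular} are used essentially, and it is the portion I would write out in full following \cite{2015Zhou}.
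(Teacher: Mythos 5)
The paper does not prove this statement at all --- it is quoted verbatim from \cite{2015Zhou}, with ``See \cite{2015Zhou}'' standing in for a proof. Your outline is precisely the Whitney--reflection argument of that reference (and of Haj\l asz--Koskela--Tuominen before it): Whitney cubes of the complement, averages over reflected balls whose mass is guaranteed by the measure--density condition \eqref{Dregular}, a truncated fractional Poincar\'e inequality, and a four-region split of the Gagliardo seminorm. The approach is correct and is the same one the paper implicitly relies on; the only caveat is that the part you defer --- the bounded overlap of the reflected balls for the $L^2$ bound and the multiscale chaining for the off-diagonal Gagliardo integrals --- is exactly where all the work lies, so as written this is a faithful plan rather than a complete proof.
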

With this theorem and the condition \hyperref[asp:I2]{(I2)} in Assumption \ref{assum_geo}, we can construct $\mathcal{E}^{\pm}:=\mathcal{E}_{\Omega^{\pm}}$. 
For the simplicity of presentation, for an arbitrary domain $D\subseteq \Omega$, we introduce
\begin{equation}
\label{Enorm}
\| \bfu \|_{E,s,D} = \|\mathcal{E}^+ \bfu \|_{s,D} + \| \mathcal{E}^- \bfu \|_{s,D} ~~~~ \text{satisfying} ~~ \| \bfu \|_{E,s,\Omega} \lesssim \| \bfu \|_{s,D}.
\end{equation}

 From now on we shall ignore the very thin region near the interface on which $\alpha_h$ and $\beta_h$ may be different from $\alpha$ and $\beta$, shown in Figure \ref{fig:interfelem}.
We can effectively address this issue using the thin strip argument given by Assumption \ref{interf_eps} and Lemma \ref{lem_strip}. 
For simplicity's sake, we will not bring this issue into our analysis below. 

\subsection{Geometric assumptions}
\label{subsec:geo}

To be able to establish a robust analysis of the discretization, we make the following assumptions. 
\begin{assumption}
\label{assump_polygon}
Each element $K$ in the global mesh $\mathcal{T}_h$ satisfies:
\begin{itemize}
 \item[(G1)] \label{asp:polygonG1} \textbf{A star-convexity condition}: 
 $K$ is star convex to a ball of a radius $\rho_K$ satisfying that
 \begin{equation}
\label{rho_cont}
\rho_K \ge \tau(\theta) h_K, ~~~ \text{with} ~~ \tau(\theta) = \exp( 1 + \kappa_0 - r(\theta) /(2(1-\theta)) ),
\end{equation}
for a suitable constant $\kappa_0<0$, where $r(\theta)$ is a suitable function satisfying $r(\theta)\ge 0$, $\forall \theta \in(0,1)$ and $\lim_{\theta\rightarrow 1/2}r(\theta) = \lim_{\theta\rightarrow1}r(\theta) = 1$ which, henceforth, is specifically chosen as
\begin{equation}
\label{rho_cont_1}
r(\theta) = 1 - \kappa_1 (\theta-0.5)(\theta-1)
\end{equation}
for a constant suitable $\kappa_1>0$. We refer readers to Remark \ref{rem_assum_polygon1} for the specific choices of $\kappa_0$ and $\kappa_1$ as well as the reasoning.
There are other acceptable options for $r(\theta)$, also see Remark \ref{rem_assum_polygon1}.
Here, $\theta$ is the regularity order of the solution.

 \item[(G2)] \label{asp:polygonG2} \textbf{An edge condition}: the number of edges in $\mathcal{E}_K$ is uniformly bounded. For each edge $e\in \mathcal{E}_K$, define $l_e = \max_{\bfx \in K}\dist(\bfx,e)$ as the supporting height of $e$, and suppose it satisfies one of the following two conditions:
 \begin{subequations}
\label{edge_cond}
\begin{align}
    & h_eh_K \le c_1 |K|,  \label{edge_cond_eq1}  \\
    &  c^{-1}_2 h_K \le  h_e  \le c_2 l^{-1}_e |K|, \label{edge_cond_eq2}
\end{align} 
\end{subequations}
for some uniform constants $c_1$ and $c_2$.
 \item[(G3)] \label{asp:polygonG3} \textbf{A ``block" condition}: define the block $\mathcal{B}_K$ of $K$ as the ball with a diameter $3h_K$ and with a center at the centroid of $K$; and assume $|\{ K': K'\cap \mathcal{B}_K\neq\emptyset  \}|$ is uniformly bounded for all $K$.
 \end{itemize}

\end{assumption}

\begin{remark}
\label{rem_assum_polygon1}
In this remark, we make detailed explanations of Assumption \ref{assump_polygon}.
\begin{itemize}
\item \hyperref[asp:polygonG1]{(G1)} explicitly specifies the dependence of the shape regularity parameter $\tau_{\theta}$ on the solution regularity parameter $\theta$. Remarkably, $\tau_\theta = 0$ for $\theta=1$, meaning that only star convexity to a point is required, and the element can shrink to a segment in an arbitrary manner. 
However, if the solution regularity is even lower, i.e., $\theta<1$, we need to accordingly enhance the shape-regularity of the element $K$ by \eqref{rho_cont} in order to obtain robust error estimates. 
Particularly, in the extreme case that $\theta\rightarrow 1/2$, as $\tau(1/2)=\exp(\kappa_0)$, we see that 
\begin{equation}
\label{kappa_0}
\rho_K \ge \exp(\kappa_0) h_K,
\end{equation}
meaning that $K$ has to be very shape-regular. So $\kappa_0$ is a parameter to control the shape in such a low-regularity case, and thus we typically assume $\kappa_0<0$ to ensure $\exp(\kappa_0)<1$. One can also verify that $\kappa_0<0$ is sufficient for $\tau(\theta)<1$ for any $\theta\in(1/2,1]$ and $\kappa_1>0$.

 In fact, it is worthwhile to mention that, for very irregular triangular elements merely satisfying the classical maximum angle condition, higher regularity (typically higher than $\bfH^1$) is generally needed for obtaining robust optimal error estimates, see \cite{1999AcostaRicardo,2005BuffaCostabelDauge,2011Lombardi} for example. 

\vspace{0.1in}

\item It is highlighted that most of the analysis in the literature relies on trace inequalities, and thus the resulting error bound from \eqref{rho_cont} usually inevitable involves a constant $(\tau(\theta))^{-1}$ which may blow up for very small $\tau(\theta)$. However, we shall employ a completely different and more elegant way to relax the constant to be $(\tau(\theta))^{\theta-1}$ which is uniformly bounded even for $\tau(\theta)\rightarrow 0$ with $\theta \rightarrow 1$.
This will help us obtain a much more robust and accurate error estimate.
In fact, the maximal value of $(\tau(\theta))^{\theta-1}$ with respect to $\theta\in(1/2,1]$ in terms of $\kappa_0$ and $\kappa_1$ can be explicitly expressed as 
\begin{equation}
\label{maxvc}
\varrho(\kappa_0,\kappa_1) 
= \begin{cases}
      & \exp(1/2) ~~~ \text{if}~ (\kappa_0+1)/\kappa_1 >1/4, \\
      & \exp(-\kappa_0/2) ~~~ \text{if}~ (\kappa_0+1)/\kappa_1 <-1/4, \\
      &\exp(\frac{16 \kappa_0^2 - 8 \kappa_0 (-4 + \kappa_1) + (4 + \kappa_1)^2}{32\kappa_1}) ), ~~ \text{if}~ (\kappa_0+1)/\kappa_1 \in [-1/4,1/4].
\end{cases}
\end{equation}
As increasing $\kappa_1$ can reduce $\tau(\theta)$, we shall concentrate on the third case in \eqref{maxvc}. 
Specifically, we shall fix $\kappa_0 = -1$ and $\kappa_1 = 60$, but other choices can be surely made. 
With these values, we can compute the values of $\tau(\theta)$ for various $\theta$ and report them in Figure \ref{fig:tauplot},
which are sufficient for many practical situations involving very irregular meshes. 
The resulting moderate value of $\varrho(-1,50)\approx 11$ indicates reasonable accuracy. See Figure \ref{fig:taufun} for the values of $\tau$ and $\varrho$ from other choices of $\kappa_0$ and $\kappa_1$. In summary, the choice of $\kappa_0$ and $\kappa_1$ is a critical factor for balancing the shape-regularity requirement and the constant in the error bound.



\begin{figure}[h]
\centering
\begin{subfigure}{.35\textwidth}
    \includegraphics[width=1.7in]{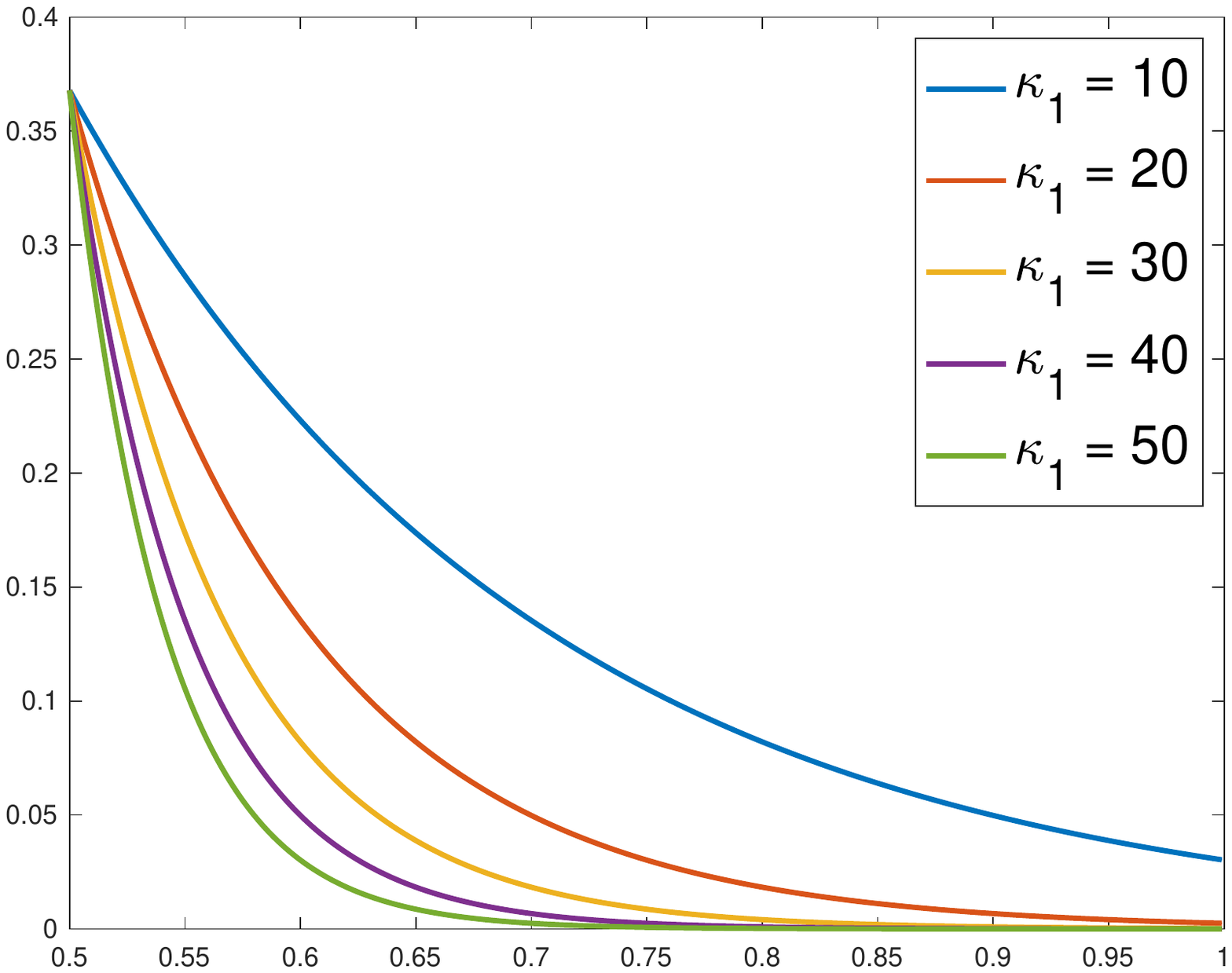}
     \label{fig:taufun1} 
\end{subfigure}
 \begin{subfigure}{.35\textwidth}
    \includegraphics[width=1.7in]{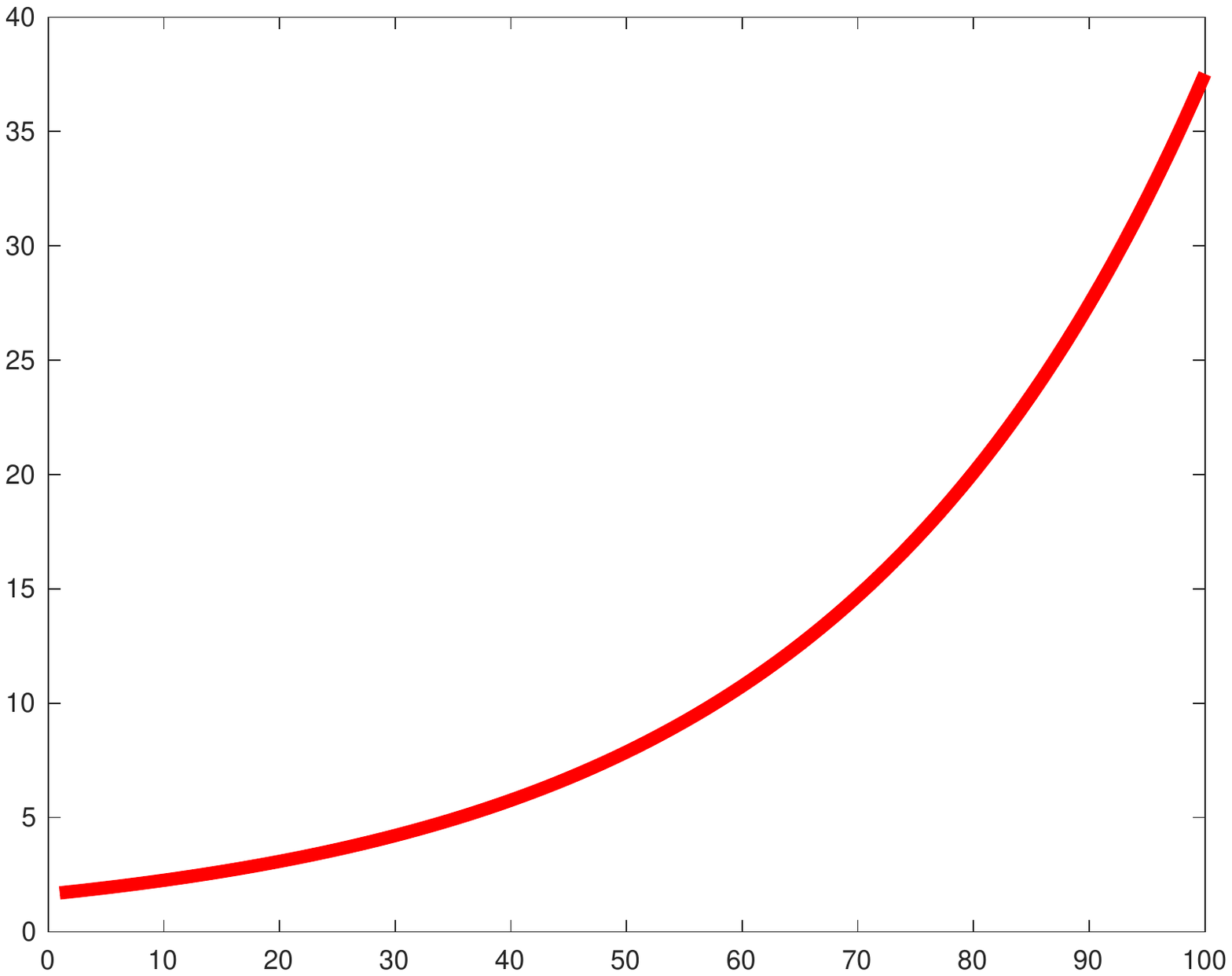}
     \label{fig:rho} 
\end{subfigure}
     \caption{Fix $\kappa_0 = -1$. Left: for various $\kappa_1$, $\tau(\theta)$ v.s. $\theta$. Right: $\varrho(\kappa_0,\kappa_1)$ v.s. $\kappa_1$.}
  \label{fig:taufun} 
\end{figure}




\item As for \hyperref[asp:polygonG2]{(G2)}, roughly speaking, \eqref{edge_cond_eq1} covers the case of a short edge, i.e., $h_e\rightarrow 0$,
while \eqref{edge_cond_eq2} allows a shrinking height supporting an edge, but the edge must be of a length $\mathcal{O}(h_K)$.
We illustrate a shrinking element in the left plot in Figure \ref{fig:submesh} where the edges $AB$ and $BC$ satisfy \eqref{edge_cond_eq1} and \eqref{edge_cond_eq2}, respectively.

\end{itemize}
\end{remark}





\subsection{Proof of Lemma \ref{lem_Pi_est}}
Thanks to the finite overlapping property assumed by \hyperref[asp:polygonG3]{(G3)} and the continuous extension given by Theorem \ref{thm_extension}, we only need to carry out the estimates locally on each element $K$ with a bound $\| \bfu \|_{E,s,\mathcal{B}_K}$.
Without loss of generality, we assume $K\subseteq \Omega^+$. For the first term in \eqref{lem_Pi_est_eq0}, we use Lemma \ref{lem_proj_approxi} and immediately obtain
\begin{equation}
\label{verify_lem_Pi_est_polygon_eq1}
\| \bfu - \Pi_K \bfu \|_{0,K} \le \| \bfu - \Pi_{\mathcal{B}_K} \mathcal{E}^+ \bfu \|_{0,K} \lesssim \|  \mathcal{E}^+ \bfu - \Pi_{\mathcal{B}_K} \mathcal{E}^+ \bfu \|_{0,\mathcal{B}_K}
\lesssim h^{\theta}_K |  \mathcal{E}^+ \bfu |_{\theta,\mathcal{B}_K}.
\end{equation}
Here, we point out that the extension $\mathcal{E}^+$ is needed as $\mathcal{B}_K$ may across the media interface.

As for the second term in \eqref{lem_Pi_est_eq0}, we consider the two cases for an edge $e$ satisfying \eqref{edge_cond_eq1} or \eqref{edge_cond_eq2} separately. The easy case is \eqref{edge_cond_eq1} for which we employ the following decomposition:
\begin{equation}
\label{verify_lem_Pi_est_polygon_eq2}
\| (I - \Pi_K)\bfu \cdot \bft \|_{0,e} \le \| (\bfu - \Pi_{\mathcal{B}_K} \mathcal{E}^+ \bfu ) \cdot \bft \|_{0,e} + \| (\Pi_{\mathcal{B}_K} \mathcal{E}^+ \bfu  - \Pi_K\bfu) \cdot \bft \|_{0,e}.
\end{equation}
As $e$ can be treated as one portion of an edge of a shape regular triangle in $\mathcal{B}_K$, the first term on the right-hand side above can be estimated by \cite[Lemma 7.2]{2017ErnGuermond}. We then focus on the second term and use \eqref{edge_cond_eq1} to write down
\begin{equation}
\begin{split}
\label{verify_lem_Pi_est_polygon_eq2_1}
&\| (\Pi_{\mathcal{B}_K} \mathcal{E}^+ \bfu  - \Pi_K\bfu) \cdot \bft \|_{0,e}  \le h_e^{1/2} | \Pi_{\mathcal{B}_K} \mathcal{E}^+ \bfu  - \Pi_K\bfu | = h_e^{1/2} |K|^{-1} \abs{ \int_K \Pi_{\mathcal{B}_K} \mathcal{E}^+ \bfu  - \bfu \dd \bfx } \\
 \le & h_e^{1/2}|K|^{-1/2}\| \Pi_{\mathcal{B}_K} \mathcal{E}^+ \bfu  - \bfu \|_{0,K} 
 \le 6 \sqrt{c_1/\pi} h^{s-1/2}_K | \mathcal{E}^+ \bfu |_{s,\mathcal{B}_K}.
\end{split}
\end{equation}

The difficulty is on the case \eqref{edge_cond_eq2}, as one cannot simply apply the argument in \eqref{verify_lem_Pi_est_polygon_eq2_1} due to the possibly-shrinking height, i.e., $h_Kh_e|K|^{-1}\le c_2 h_K/l_e \rightarrow \infty$. 
Here, we employ a completely different argument. 
Consider an edge $e_K$ that has the maximal length, 
and we know that $|e_K|\simeq h_K$. Let $l_K$ be the supporting height of ${e}_K$ toward $K$. 
We construct a square $\mathcal{S}_K\subseteq \mathcal{B}_K$ with also $\mathcal{S}_K\supseteq K$ such that it has one side denoted as $\bar{e}$ along $e_K$ with a length $\mathcal{O}(h_K)$. 
For simplicity of presentation, we may assume $\mathcal{S}_K$ is just a square with the side length $h_K$; otherwise we just slightly adjust its size.
See Figure \ref{fig:block} for the illustration of the geometry. 
For each $e\in\mathcal{E}_K$,  the trace inequality on the whole square $\mathcal{S}_K$ yields
 \begin{equation}
\label{verify_lem_Pi_est_polygon_eq3}
\| (I - \Pi_K)\bfu \cdot \bft \|_{0,e} \lesssim h^{-1/2}_K \| \mathcal{E}^+  \bfu - \Pi_{K} \mathcal{E}^+  \bfu  \|_{0,\mathcal{S}_K}. 
\end{equation}
We proceed to estimate the right-hand side in \eqref{verify_lem_Pi_est_polygon_eq3} and emphasize that it is certainly non-trivial as $K$ might be extremely shrinking to one edge of $\mathcal{S}_K$, and the low regularity makes the estimation even more difficult.

To estimate the right-hand side of \eqref{verify_lem_Pi_est_polygon_eq3}, we consider a local coordinate system $(x_1,x_2)$ with $x_1$ along with $\bar{e}$ and let $e(b_2)$ be the segment parallel to $\bar{e}$ with the $x_2$ coordinate $b_2\in (0,l_K)$.
Thus, $K$ is actually contained in a rectangle denoted as $\widetilde{\mathcal{S}}_K$ with a height $l_K$, as shown in Figure \ref{fig:block}.
In the following discussion, we write $\Pi_{e(b_2)}$ as $\Pi_{b_2}$ for simplicity.
We apply the triangular inequality to obtain
\begin{equation}
\begin{split}
\label{verify_lem_Pi_est_polygon_eq4}
\| \mathcal{E}^+  \bfu - \Pi_{K}  \mathcal{E}^+  \bfu  \|^2_{0,\mathcal{S}_K} & \lesssim \underbrace{ \| \mathcal{E}^+  \bfu - \Pi_{\mathcal{S}_K} \mathcal{E}^+  \bfu  \|^2_{0,\mathcal{S}_K} }_{(I)}  \\
& + \underbrace{ \| \Pi_{\mathcal{S}_K} \mathcal{E}^+  \bfu - \Pi_{b_2} \mathcal{E}^+  \bfu  \|^2_{0,\mathcal{S}_K} }_{(II)} 
+ \underbrace{ \|  \Pi_{b_2} \mathcal{E}^+  \bfu - \Pi_{K} \mathcal{E}^+  \bfu  \|^2_{0,\mathcal{S}_K} }_{(III)}.
\end{split}
\end{equation}
The estimate of $(I)$ immediately follows from Lemma \ref{lem_proj_approxi}. 
For $(II)$, with the projection property, the H\"older's inequality, the trace inequality and Lemma \ref{lem_proj_approxi}, we have
\begin{equation}
\begin{split}
\label{verify_lem_Pi_est_polygon_eq5}
(II) & 
= |\mathcal{S}_K| |e(b_2)|^{-2} \abs{ \int_{e(b_2)} \Pi_{\mathcal{S}_K} \mathcal{E}^+  \bfu - \mathcal{E}^+  \bfu  \dd \bfx  }^2 
 \le |\mathcal{S}_K| |e(b_2)|^{-1} \|  \Pi_{\mathcal{S}_K} \mathcal{E}^+  \bfu - \mathcal{E}^+  \bfu \|^2_{0,e(b_2)} \\
 &\lesssim \| \Pi_{\mathcal{S}_K} \mathcal{E}^+  \bfu - \mathcal{E}^+  \bfu \|^2_{0, \mathcal{S}_K} \lesssim h^{2s}_K | \mathcal{E}^+  \bfu |_{s,\mathcal{S}_K}.
\end{split}
\end{equation}
The most difficult one is $(III)$. 
With the similar derivation to \eqref{verify_lem_Pi_est_polygon_eq5}, we can rewrite $(III)$ into
\begin{equation}
\label{verify_lem_Pi_est_polygon_eq6}
(III) = |\mathcal{S}_K| |K|^{-2} \abs{ \int_{K} \Pi_{b_2} \mathcal{E}^+  \bfu - \mathcal{E}^+  \bfu  \dd \bfx  }^2 \le |\mathcal{S}_K| |K|^{-1} \|  \mathcal{E}^+  \bfu - \Pi_{b_2} \mathcal{E}^+  \bfu   \|^2_{0,\widetilde{S}_K}.
\end{equation}
Notice that $|\mathcal{S}_K| |K|^{-1}$ may blow up as $K$ may extremely shrink, and thus a more delicate analysis is needed.
We shall integrate $(III)$ in \eqref{verify_lem_Pi_est_polygon_eq6} with respect to $b_2$ from $0$ to $l_K$, 
and use the definition of $\Pi_{b_2}$ and the H\"older's inequality to obtain
\begin{equation*}
\begin{split}
\label{verify_lem_Pi_est_polygon_eq7}
\int_0^{l_K} (III) \dd b_2 & \le |\mathcal{S}_K| |K|^{-1} \int_0^{l_K} \int_0^{l_K} \int_{0}^{h_K} ( \mathcal{E}^+  \bfu(a_1,a_2) - \Pi_{b_2} \mathcal{E}^+  \bfu)^2 \dd a_1 \dd a_2 \dd b_2 \\
& \le |\mathcal{S}_K| |K|^{-1} \int_0^{l_K} \int_0^{l_K} \int_{0}^{h_K} h^{-2}_K \left( \int_0^{h_K} (\mathcal{E}^+  \bfu(a_1,a_2) - \mathcal{E}^+  \bfu(b_1,b_2)) \dd b_1 \right)^2 \dd a_1 \dd a_2 \dd b_2 \\
& \le |\mathcal{S}_K| |K|^{-1} h^{-1}_K \int_0^{l_K} \int_0^{l_K} \int_{0}^{h_K}  \int_0^{h_K} (\mathcal{E}^+  \bfu(a_1,a_2) - \mathcal{E}^+  \bfu(b_1,b_2))^2 \dd b_1 \dd a_1 \dd a_2 \dd b_2 \\
& \le |\mathcal{S}_K| |K|^{-1} h_K l^{2s}_K | \mathcal{E}^+ \bfu |^2_{s,\mathcal{S}_K} \le c^2_2 h^2_K l^{2s-1}_K | \mathcal{E}^+ \bfu |^2_{s,\mathcal{S}_K}  ,
\end{split}
\end{equation*}
where we have used Lemma \ref{lem_geometry_patch} and \eqref{edge_cond_eq2} in the last inequality. 
Now, dividing \eqref{verify_lem_Pi_est_polygon_eq7} by $l_K$, we arrive at  $(III) \le c^2_2 h^2_K l^{2s-2}_K  | \mathcal{E}^+ \bfu |^2_{s,\mathcal{S}_K}$.
Then, combining the estimates above we obtain
\begin{equation}
\label{verify_lem_Pi_est_polygon_eq8}
 \| \mathcal{E}^+  \bfu - \Pi_{K}  \mathcal{E}^+  \bfu  \|^2_{0,\mathcal{S}_K} \lesssim h^{2s}_K  | \mathcal{E}^+ \bfu |^2_{s,\mathcal{S}_K}   +  h^2_K l^{2s-2}_K | \mathcal{E}^+ \bfu |^2_{s,\mathcal{S}_K}   .
\end{equation}
Then, applying \eqref{rho_cont} in Assumption \hyperref[asp:polygonG1]{(G1)}, noticing $l_K\ge \rho_K$ and $\theta\in (1/2,1]$, and using \eqref{maxvc}, we have
\begin{equation}
\label{verify_lem_Pi_est_polygon_eq9}
h^2_K l^{2\theta-2}_K = h^{2\theta}_K (h_K/l_K)^{2(1-\theta)} \le h^{2\theta}_K (\tau(\theta))^{2(\theta-1)} \le h^{2\theta}_K (\varrho(\kappa_0,\kappa_1) )^2.
\end{equation}
Putting \eqref{verify_lem_Pi_est_polygon_eq9} into \eqref{verify_lem_Pi_est_polygon_eq8} and \eqref{verify_lem_Pi_est_polygon_eq3}, with the choice $\kappa_0 = 0.5$ and $\kappa_1 = 60$, we have
\begin{equation}
\label{verify_lem_Pi_est_polygon_eq10}
h^{1/2}_K \| (I - \Pi_K)\bfu \cdot \bft \|_{0,e} \lesssim  11 c_2 h^{s}_K  | \mathcal{E}^+ \bfu |_{s,\mathcal{B}_K},
\end{equation}
which yields the desired local estimate with \eqref{verify_lem_Pi_est_polygon_eq1}. 
The global one follows from the finite overlapping property imposed by Assumption \hyperref[asp:polygonG3]{(G3)}.

\begin{figure}[h]
\centering
    \includegraphics[width=2.5in]{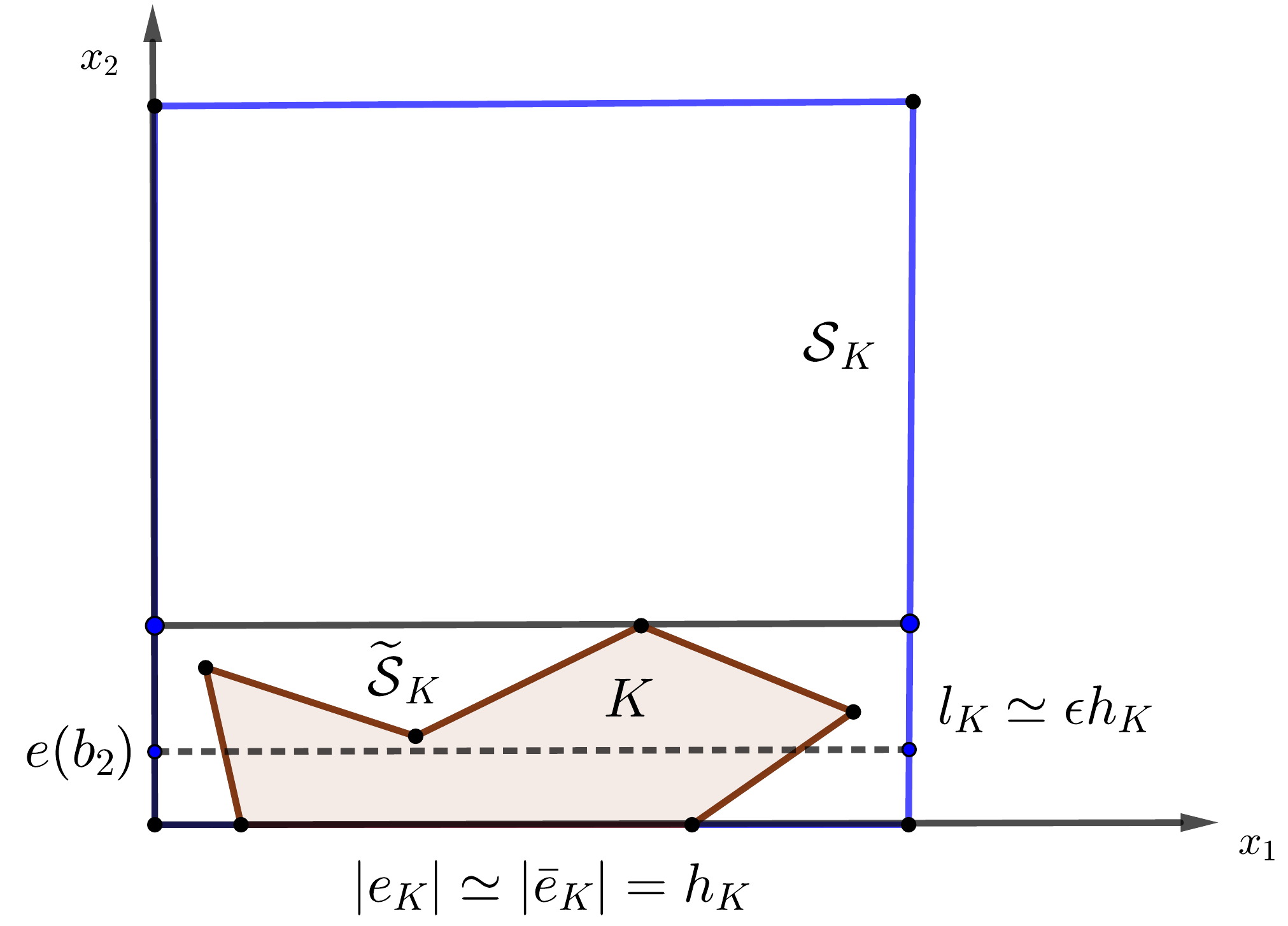}
    \caption{Illustration of an element $K$ and the auxiliary square $\mathcal{S}_K$. $K$ is contained in a rectangle $\widetilde{\mathcal{S}}_K$ with the height $l_K$.}
     \label{fig:block} 
\end{figure}



\subsection{Proof of Lemma \ref{lem_interp_est}}
Similarly, it is sufficient for us to carry out the estimation locally on each element.
The easiest one is \eqref{lem_interp_est_eq03} which immediately follows from the commutative diagram \eqref{DR_curl} with the approximation property of the projection given by Lemma \ref{lem_proj_approxi}

For \eqref{lem_interp_est_eq01}, the triangular inequality yields
\begin{equation}
\label{verify_lem_interp_est_eq2}
\| \bfxi_h  \|_{0,K} \le \| \bfu - \Pi_K \bfu \|_{0,K} + \| \Pi_K \bfu - I^e_h \bfu \|_{0,K}.
\end{equation}
The estimate of the first term on the right-hand-side of \eqref{verify_lem_interp_est_eq2} comes from Lemma \ref{lem_Pi_est}. 
For the second term, as $\Pi_K \bfu - I^e_h \bfu\in \bfV^e_h(K)$, $\forall K\in \mathcal{T}_h$, \eqref{lem_Poincare_eq0} in Lemma \ref{lem_L2stab} with the triangular inequality implies
\begin{equation}
\begin{split}
\label{verify_lem_interp_est_eq3}
 \| \Pi_K \bfu - I^e_h \bfu \|_{0,K} 
  \lesssim h_K \| \rot I^e_h \bfu \|_{0,K} +  h^{1/2}_K \|  (\Pi_K \bfu - \bfu)\cdot\bft \|_{0,\partial K} +  h^{1/2}_K \|  ( \bfu - I^e_h \bfu)\cdot\bft \|_{0,\partial K}.
 \end{split}
\end{equation}
Here, the bound of the first term on the right-hand-side above simply follows from \eqref{lem_interp_est_eq03}, 
while the estimate of the second term follows from Lemma \ref{lem_Pi_est}. Now, let us concentrate on third term in the right-hand side of \eqref{verify_lem_interp_est_eq3}. 
Given each edge $e\in\mathcal{E}_K$, suppose $e\subseteq\Omega^+$ without loss of generality, and let $\tilde{e}\subseteq \mathcal{B}_K$ be an extension of $e$ such that the length is $\mathcal{O}(h_K)$.
Then, the projection property, the trace inequality of \cite[Lemma 7.2]{2017ErnGuermond} and Lemma \ref{lem_proj_approxi} yield
\begin{equation}
\begin{split}
\label{verify_lem_interp_est_eq12}
\| ( \bfu - I^e_h \bfu)\cdot\bft \|_{0,e} & = \| \bfu\cdot\bft - \Pi_e (\bfu\cdot\bft) \|_{0,e} \le \| (\mathcal{E}^+\bfu)\cdot\bft - (\Pi_{\mathcal{B}_K} \mathcal{E}^+\bfu)\cdot\bft \|_{0,\tilde{e}} \\
& \lesssim h^{-1/2}_K \| \mathcal{E}^+\bfu - \Pi_{\mathcal{B}_K} \mathcal{E}^+\bfu \|_{0,\mathcal{B}_K} + h^{s-1/2}_K | \mathcal{E}^+\bfu |_{s,\mathcal{B}_K} \lesssim h^{s-1/2}_K | \mathcal{E}^+\bfu |_{s,\mathcal{B}_K}.
\end{split}
\end{equation}
Thus, \eqref{lem_interp_est_eq01} is concluded by putting these estimates into \eqref{verify_lem_interp_est_eq3} and using the block condition \hyperref[asp:polygonG3]{(G3)} in Assumption \ref{assump_polygon}.

Next, for \eqref{lem_interp_est_eq02}, by the triangular inequality and applying \eqref{lem_Poincare_eq0} to $\Pi_K\bfxi_h$, we have
\begin{equation}
\begin{split}
\label{verify_lem_interp_est_eq4}
\|  \bfxi_h \|^2_{b} 
 \lesssim h \sum_{K\in\mathcal{T}_i} ( \| \Pi_K \bfxi_h\cdot\bft \|^2_{0,\partial K} + \| \bfxi_h\cdot\bft \|^2_{0,\partial K} ).
\end{split}
\end{equation}
Here, the second term on the right-hand side, $\| \bfxi_h\cdot\bft \|^2_{0,\partial K}$, is the same as \eqref{verify_lem_interp_est_eq12}.
Let us focus on $\| \Pi_K \bfxi_h\cdot\bft \|_{0,\partial K}$ below.

We still separate our discussion into the two situations given by \eqref{edge_cond}. 
Given an edge $e\in\mathcal{E}_K$, let us first consider \eqref{edge_cond_eq1}. 
As $\Pi_K\bfxi_h$ is a constant vector on $K$, we have 
\begin{equation}
\label{verify_lem_interp_est_polygon_eq1}
 \| \Pi_K \bfxi_h \cdot\bft \|_{0,e} =  h_e^{1/2}/|K|^{-1} \abs{ \int_K \bfxi_h \cdot\bft \dd \bfx } \le \sqrt{c_1} h^{-1/2}_K \| \bfxi_h \|_{0,K},
\end{equation}
of which the estimate is given by \eqref{lem_interp_est_eq01} established above.


Now, we suppose that $e$ satisfies \eqref{edge_cond_eq2}. 
As the element $K$ may extremely shrink, the argument in \eqref{verify_lem_interp_est_eq4} is not applicable anymore.
Without loss of generality, we can let $e$ be along with the $x_1$ axis, and thus $\bft_e = [1,0]^\top$. 
Construct $p^e_h = x_2\in \mathbb{P}_1(K)$ such that $\brot p^e_h = \bft_e$.
Then, with the projection property and integration by parts, we have
\begin{equation}
\begin{split}
\label{verify_lem_interp_est_polygon_eq5}
\| \Pi_K\bfxi_h\cdot\bft_e \|_{0,e}& = h^{1/2}_e |K|^{-1} \abs{ \int_K \bfxi_h\cdot \brot p^e_h \dd \bfx  } \\
& \le \underbrace{ h^{1/2}_e |K|^{-1} \| \rot \bfxi_h \|_{0,K} \| p^e_h \|_{0,K} }_{(I)} +  \underbrace{ h^{1/2}_e |K|^{-1} \| \bfxi_h \cdot \bft \|_{0,\partial K}  \|  p^e_h \|_{0,\partial K} }_{(II)}.
\end{split}
\end{equation}
Noticing $ \| p^e_h \|_{\infty,K} \le l_e$, we apply \eqref{edge_cond_eq2} together with \eqref{verify_lem_interp_est_eq12} and \eqref{lem_interp_est_eq03} to obtain
\begin{subequations}
\label{verify_lem_interp_est_polygon_eq6}
 \begin{align}
(I) & \le h^{1/2}_e |K|^{-1/2} l_e  \| \rot \bfxi_h \|_{0,K} \le \sqrt{c_2} l^{1/2}_e \| \rot \bfu \|_{0,K} \lesssim \sqrt{c_2}  h^{1/2}_K \| \rot \bfu \|_{0,K}, \\
(II) & \le h^{1/2}_e |K|^{-1} |\partial K|^{1/2} l_e \| \bfxi_h \cdot \bft \|_{0,\partial K}  \lesssim c_2 h_K^{1/2} h^{-1/2}_e |  \mathcal{E}^+  \bfu |_{s,\mathcal{B}_K} \lesssim c^{3/2}_2  h^{s-1/2}_K |  \mathcal{E}^+  \bfu |_{s,\mathcal{B}_K}.
\end{align}
\end{subequations}
At last, the proof is finished by putting \eqref{verify_lem_interp_est_polygon_eq6} into \eqref{verify_lem_interp_est_polygon_eq5}.

\section{Numerical Examples}
\begin{figure}[h]
    \centering
\begin{subfigure}{.48\textwidth}
    \centering
    \includegraphics[width=1.5in]{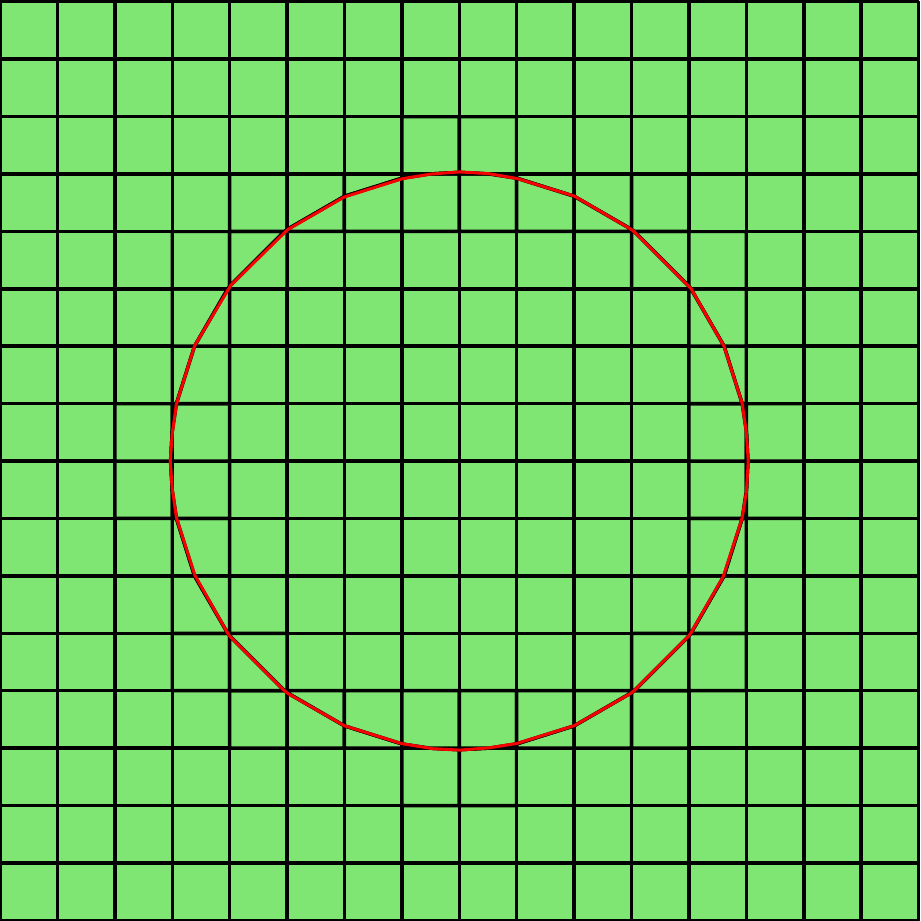}
\end{subfigure}
 \begin{subfigure}{.48\textwidth}
    \centering
    \includegraphics[width=1.5in]{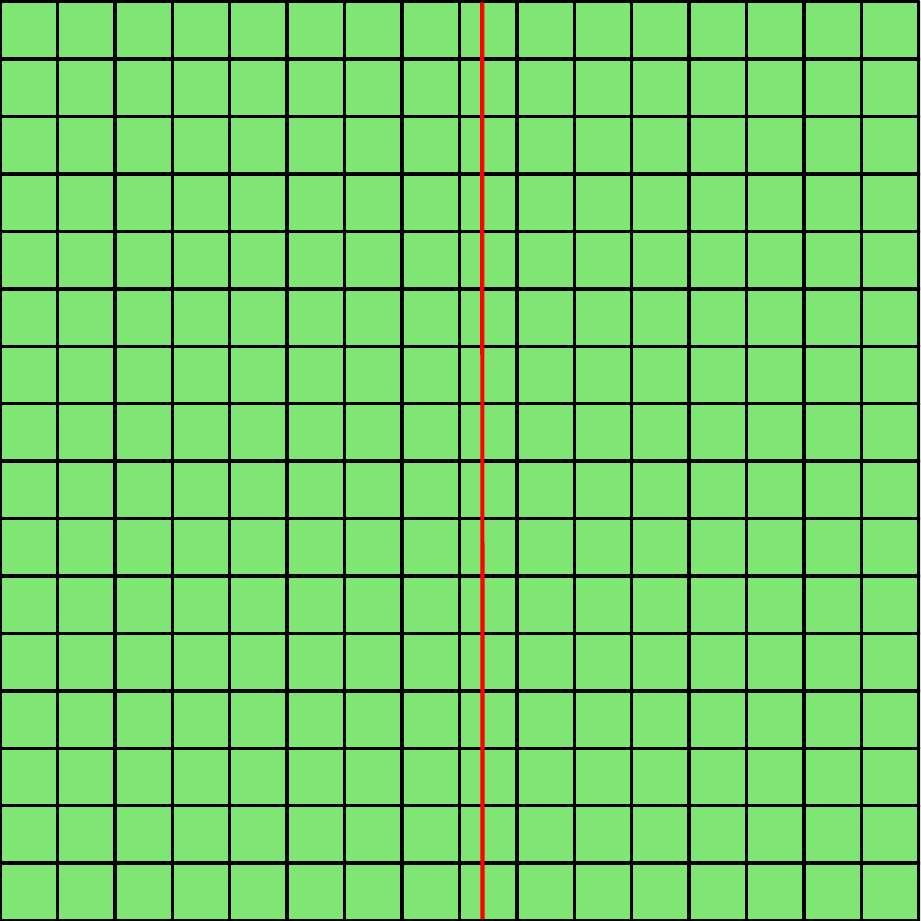}
\end{subfigure}
\caption{Left: Example \ref{subsec:examp1} with
    mesh size $h = 1/16$. Right: Example \ref{subsec:examp2} with
    mesh size $h = 1/16$.}
    \label{fig:example12}
\end{figure}
In this section, we shall numerically demonstrate the effectiveness of the proposed method. 
The first two examples aim to verify the convergence and robustness of the method, 
with the interface being a circle and a straight line, respectively. 
In the third example, we consider a more complex interface shape with a geometric singularity. 
In the final example, we apply the method to the case of extremely thin layers. 
The last two examples all feature relatively large wave numbers.
All meshes are generated by cutting the background Cartesian mesh directly by the interface.
The code is implemented by using the FEALPy package \cite{fealpy}.

\subsection{Convergence order}
\label{subsec:examp1}
    Set the domain and the interface geometry:
    $$
    \Omega = (-1, 1)\times(-1, 1), \quad \Omega^- = \{(x, y) : x^2+y^2 <
    \frac{\pi}{5}\},\quad \Omega^+ = \Omega - \Omega^-,
    $$
    which is illustrated by the left plot in Figure \ref{fig:example12}.
    Consider the parameters $\alpha|_{\Omega^-} = \beta|_{\Omega^-} = 1, \alpha|_{\Omega^+} =
    \beta|_{\Omega^+} = 10$, and the exact solution:
    $$
    {\bf u} = \left\{
        \begin{aligned}
        & - k_0(r_0^2 - x^2 - y^2)
        \begin{pmatrix}
        y\\
        x
        \end{pmatrix} \quad& \text{in } \Omega^-, \\
        &-\frac{1}{10}
        k_1(r_0^2 - x^2 - y^2)(r_1^2 - x^2 - y^2)
        \begin{pmatrix}
        y\\
        x
        \end{pmatrix}&\text{ in } \Omega^+, \\
        \end{aligned}
    \right.
    $$
    where $r_0 = \frac{\pi}{5}, r_1 = 1, k_1 = 20, k_0 = k_1(r_1^2-r_0^2).$
The numerical results are presented in Table \ref{tab:exm0} where we can see that $\|\bfu - \Pi_h \bfu_h\|_0 = o(h), \|\rot \bfu - \rot
\Pi_h\bfu_h\|_0 = o(h)$ which is clearly optimal.
\begin{table}[!htp] 
\centering
\caption{Errors $\|\bfu - \Pi_h \bfu_h\|_0$ and $\|\rot \bfu - \rot \bfu_h\|_0$ 
of Example \ref{subsec:examp1}.}
\label{tab:exm0}
\begin{tabular}[c]{|c|c|c|c|c|c|c|c|}\hline
$h$ & $1/8$ & $1/16$ & $1/32$ & $1/64$ & $1/128$
\\\hline
$|| {\bf u} - \Pi_h{\bf u}_h||_0$ & 4.7980e-01 & 2.1925e-01 & 1.0073e-01 & 4.7611e-02 & 2.3031e-02
\\\hline
Order & -- & 1.13 & 1.12 & 1.08 & 1.05
\\\hline
$||\rot {\bf u} - \rot {\bf u}_h||_0$ & 1.1556e+00 & 5.7222e-01 & 2.7268e-01 & 1.3669e-01 & 6.8468e-02
\\\hline
Order & -- & 1.01 & 1.07 & 1.   & 1.
\\\hline
\end{tabular}
\end{table}


\subsection{Robustness with respect to the element shape}
\label{subsec:examp2}

   Still consider the domain 
    $\Omega=(-1, 1)\times(-1, 1)$ but with a linear interface 
    $\Gamma = \{(x, y): x=\varepsilon, -1<y<1\}$, see the right plot in Figure \ref{fig:example12} for illustration.
    The exact solution and the parameters are given by
    $$
    \bfu = 
    \begin{pmatrix}
    |x-\varepsilon|^s+\cos(x+y)\\
    \sin(x+y)
    \end{pmatrix},\ \alpha = 1, \ 
    \beta|_{\Omega^-} = 1, \beta|_{\Omega^+} = 2,
    $$
    where $\Omega^-=\{(x, y) : x>\varepsilon\}, \Omega^+=\Omega-\Omega^-$.
In this case, the true solution $\bfu$ does not have a $H^1$ regularity when $s < 0.5$.
We let $\varepsilon = 1e-7$ and $h = 2^{-k}, k = 3, 4, 5, 6, 7$, such that very thin elements will be generated near the interface. 
The numerical results for different $s$ are 
presented in Tables \ref{tab:error1}-\ref{tab:error3} which still show the optimal convergence rate in terms of the regularity order, 
i.e., $\|\bfu - \Pi_h \bfu_h\|_0 = o(h^{s+0.5})$. 
It also shows that the VEM is highly robust to the very irregular and anisotropic element shapes.


\begin{table}[!h]
\centering
\caption{Errors $\|\bfu - \Pi_h \bfu_h\|_0$ and $\|\rot \bfu - \rot \bfu_h\|_0$ 
of Example \ref{subsec:examp2} with $s=0.2$.}
\label{tab:error1}
\begin{tabular}[c]{|c|c|c|c|c|c|c|}\hline
$h$ & $1/8$ & $1/16$ & $1/32$ & $1/64$ & $1/128$
\\\hline
$|| \bfu - \Pi_h\bfu_h||_0$ & 4.4285e-01 & 2.7290e-01 & 1.6819e-01 & 1.0362e-01 & 6.3819e-02
\\\hline
Order & -- & 0.7  & 0.7  & 0.7  & 0.7
\\\hline
$||\rot {\bf u} - \rot {\bf u}_h||_0$ & 2.1385e-01 & 1.0371e-01 & 5.1362e-02 & 2.5594e-02 & 1.2779e-02
\\\hline
Order & -- & 1.04 & 1.01 & 1.   & 1.
\\\hline
\end{tabular}
\end{table}

\begin{table}[!h]
\centering
\caption{Errors $\|\bfu - \Pi_h \bfu_h\|_0$ and $\|\rot \bfu - \rot \bfu_h\|_0$ 
of Example \ref{subsec:examp2} with $s=-0.1$.}
\label{tab:error2}
\begin{tabular}[c]{|c|c|c|c|c|c|c|}\hline
$h$ & $1/8$ & $1/16$ & $1/32$ & $1/64$ & $1/128$
\\\hline
$|| \bfu - \Pi_h\bfu_h||_0$ & 1.2115e+00 & 9.1955e-01 & 6.9733e-01 & 5.2857e-01 & 4.0059e-01
\\\hline
Order & -- & 0.4 & 0.4 & 0.4 & 0.4
\\\hline
$||\rot {\bf u} - \rot {\bf u}_h||_0$ & 2.4553e-01 & 1.1701e-01 & 5.7737e-02 & 2.8908e-02 & 1.4583e-02
\\\hline
Order & -- & 1.07 & 1.02 & 1.   & 0.99
\\\hline
\end{tabular}
\end{table}

\begin{table}[!h]
\centering
\caption{Errors $\|\bfu - \Pi_h \bfu_h\|_0$ and $\|\rot \bfu - \rot \bfu_h\|_0$ 
of Example \ref{subsec:examp2} with $s=-0.4$.}
\label{tab:error3}
\begin{tabular}[c]{|c|c|c|c|c|c|c|}\hline
$h$ & $1/8$ & $1/16$ & $1/32$ & $1/64$ & $1/128$
\\\hline
$|| \bfu - \Pi_h\bfu_h||_0$ & 3.5682e+00 & 3.3247e+00 & 3.1026e+00 & 2.8985e+00 & 2.7102e+00
\\\hline
Order & -- & 0.1  & 0.1  & 0.1  & 0.1
\\\hline
$||\rot {\bf u} - \rot {\bf u}_h||_0$ & 4.9427e-01 & 2.8425e-01 & 1.7532e-01 & 1.1181e-01 & 7.2487e-02
\\\hline
Order & -- & 0.8  & 0.7  & 0.65 & 0.63
\\\hline
\end{tabular}
\end{table}


\subsection{Geometric singularity}
\label{subsec:geo_sing}
In this example, we consider an interface shape formed by two circles containing certain geometric singularity:
$\Omega=(0, 4)\times(0, 1)$, $\Omega^+ = \Omega-\Omega^-$, and 
\begin{figure}[h]
    \centering
    \includegraphics[width=4.4in]{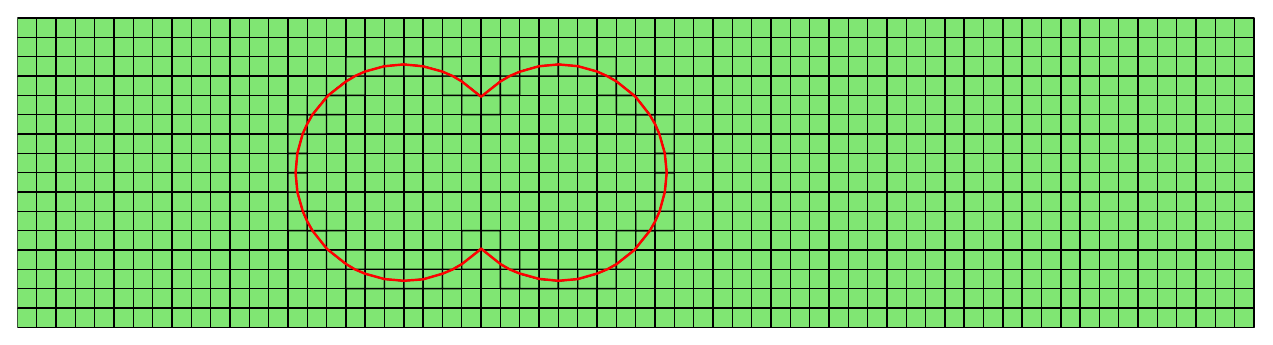}
    \caption{Geometry for Example \ref{subsec:geo_sing} with the mesh size $h=1/16$.}
     \label{fig:interfacedoublecircle} 
\end{figure}
    $$
    \Omega^- = \{(x, y) : (x-1.75)^2+y^2 < r^2  \} \cup \{(x, y) :
    (x-1.25)^2+y^2 < r^2  \},
    $$
    where $r = 0.35$, see Figure \ref{fig:interfacedoublecircle} for illustration.
    Set the source term, parameters and the boundary condition to be
    $$
    \alpha = 1,\quad \beta = \omega^2\left(\epsilon + {\rm i} \frac{\sigma}{\omega}\right), \quad {\bf f} = -{\rm i}\omega\begin{pmatrix}
0\\
1
\end{pmatrix} \exp\left(-\frac{(x-3)^2}{\epsilon^2}\right),\quad g = 0.
    $$
    where ${\rm i} = \sqrt{-1}, \sigma|_{\Omega^-}=1, 
    \sigma|_{\Omega^+}=0.1$.

As it is very difficult to construct functions in an explicit format that can exactly satisfy the jump conditions,
we will use the solution computed on the mesh with the size $h = 1/2^8$ as the reference solution, denoted as $\bfu^8_h$.
The numerical results for different $\omega$ and $\epsilon$ are 
presented in Tables \ref{tab:exm3-0}-\ref{tab:exm3-2}, which clearly indicate the optimal convergence.
Meanwhile, we also compare the numerical solutions of the VEM and classic FEM with
the first kind N\'ed\'elec element for the mesh size $h = 1/2^8$. The numerical results are shown
in Figure \ref{fig:femvsvemdoublecircle}, where we can clearly observe that their behavior are quite comparable.

\begin{table}[!h]
\centering
\caption{The solutions errors of Example \ref{subsec:geo_sing} with $\epsilon = 0.5$, $\omega=5$}
\label{tab:exm3-0}
\begin{tabular}[c]{|c|c|c|c|c|c|}\hline
$k$ & $3$ & $4$ & $5$ & $6$ & $7$
\\\hline
$||{\bf u}_h^8 - {\bf u}_h^k||_0$ & 6.2347e-02 & 2.6444e-02 & 1.2342e-02 & 5.9152e-03 & 2.6343e-03
\\\hline
Order & -- & 1.24 & 1.1  & 1.06 & 1.17
\\\hline
$||{\rm rot} {\bf u}_h^8 - {\rm rot} {\bf u}_h^k||_0$  & 2.2230e-01 & 9.5086e-02 & 4.4559e-02 & 2.1380e-02 & 9.5216e-03
\\\hline
Order & -- & 1.23 & 1.09 & 1.06 & 1.17
\\\hline
\end{tabular}
\label{tab:eps05}
\end{table}


\begin{table}[!htp]
\centering
\caption{The solution errors of Example \ref{subsec:geo_sing} with $\epsilon = 0.01, \omega=100$}
\label{tab:exm3-2}
\begin{tabular}[c]{|c|c|c|c|c|c|c|c|}\hline
$k$ & $3$ & $4$ & $5$ & $6$ & $7$
\\\hline
$|| \Pi_h\bfu_h^8 - \Pi_h\bfu_h^k||_0$ & 7.9611e-02 & 3.6193e-02 & 1.2285e-02 & 5.0727e-03 & 2.1258e-03
\\\hline
Order & -- & 1.14 & 1.56 & 1.28 & 1.25
\\\hline
$||\rot \bfu_h^8 - \rot \bfu_h^k||_0$ & 8.5545e-01 & 3.9451e-01 & 1.5309e-01 & 7.1350e-02 & 3.0665e-02
\\\hline
Order & -- &  1.12 &  1.37 &  1.1  &  1.22
\\\hline
\end{tabular}
\end{table}

\begin{figure}[!h]
    \centering
    \includegraphics[width=5in]{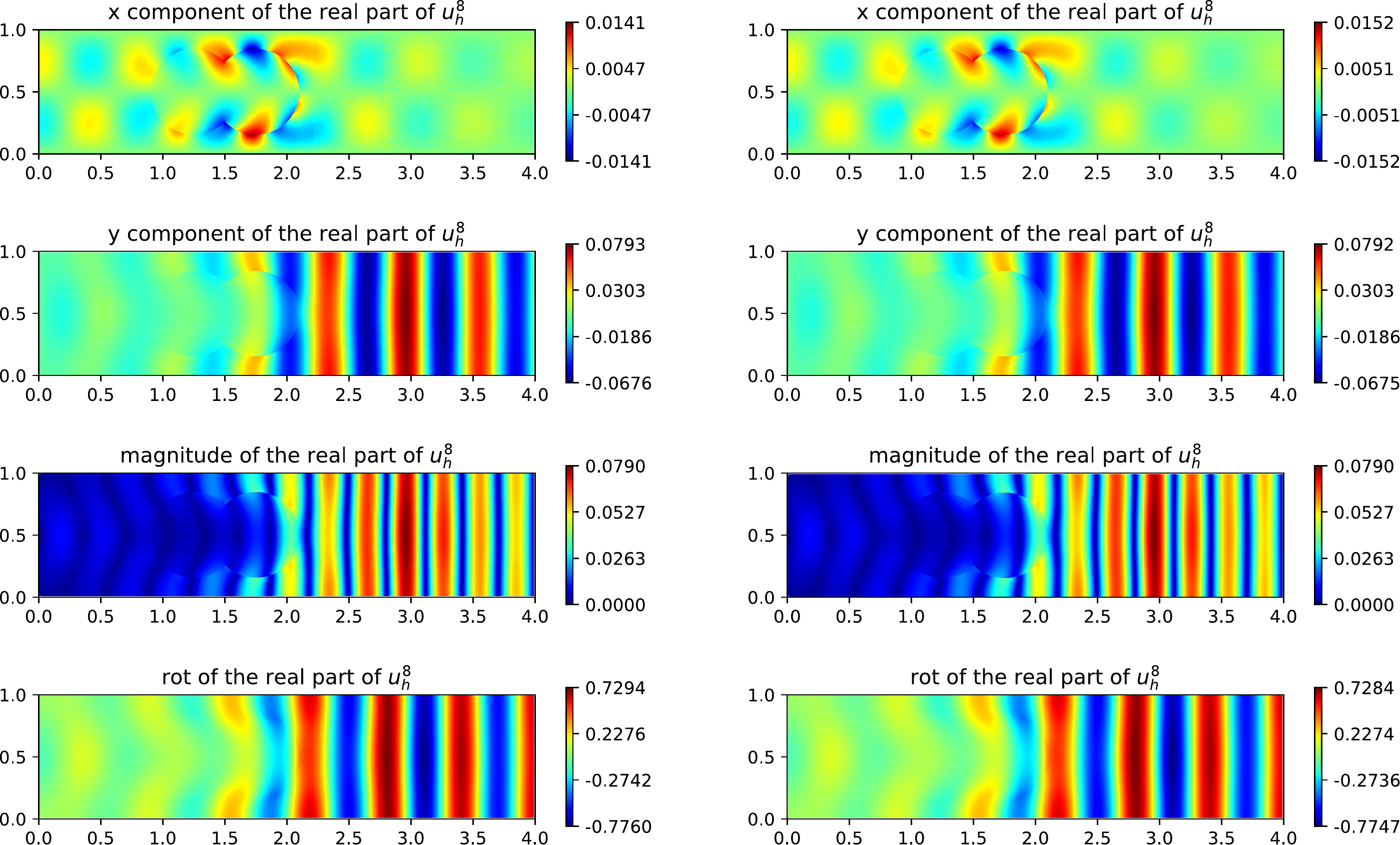}
    \caption{Example \ref{subsec:geo_sing}: comparison of the FEM solution with the
            first kind N\'ed\'elec element (left) and the VEM solution (right) when the mesh size $h = 1/2^8$.}
     \label{fig:femvsvemdoublecircle} 
\end{figure}

\subsection{Thin layers}
\label{subsec:thinlayer}
Finally, we consider the scenario where electromagnetic waves propagate through extremely thin layers, 
specifically two or five layers as shown in Figure \ref{fig:twolayerspolymesh}.
This situation requires very fine meshes to accurately fit the thin layers, as
illustrated in Figure \ref{fig:twolayerstrimesh}, which poses challenges for mesh generation.
    Let $\Omega=(0, 4)\times(0, 1)$ and
    $$
    \Omega^- = \left\{
        \begin{aligned}
            &\{(x, y) : x \in (0, 4), y \in (0.24, 0.26)\cup(0.74, 0.76)\}\quad 
            & \text{\rm for the case of two layers,}\\
            &\begin{multlined}
            \{(x, y) : x \in (0, 4),
            y \in (0.09, 0.11)\cup(0.24, 0.26)\\
            \cup(0.49, 0.51)\cup(0.74,
            0.76)\cup(0.89, 0.91)\}
            \end{multlined}
            \quad 
            & \text{\rm for the case of five layers.}\\
        \end{aligned}
    \right.
    $$
Set the parameters, the source term and the boundary condition as
    $$
    \alpha = 1,\quad \beta = \omega^2\left(\epsilon + {\rm i} \frac{\sigma}{\omega}\right), \quad {\bf f} = -{\rm i}\omega\begin{pmatrix}
0\\
1
\end{pmatrix} \exp\left(-\frac{(x-3)^2}{\epsilon^2}\right),\quad g = 0.
    $$
    where ${\rm i} = \sqrt{-1}, \omega=100, \sigma|_{\Omega_0}=1, \sigma|_{\Omega_1}=0.1$.

Let us first present the results for the case of two layers. 
Similar to Example \ref{subsec:geo_sing}, we still use the solution computed on the fine mesh with the size $h = 1/2^8$ as the reference solution, denoted as $\bfu^8_h$.
The numerical results are presented in Table \ref{tab:exm4} which clearly show the optimal convergence.
In addition, we also compare the VEM solution and the classical FEM solutions with the first kind N\'ed\'elec element method for the mesh size $h = 1/2^8$. 
The results are shown in Figure \ref{fig:femvsvemdoublelayer} and \ref{fig:femvsvemfivelayer}, respectively for the cases of two layers and five layers.
Again, their behavior are quite comparable.

\begin{figure}[!h]
    \centering
    \includegraphics[width=4.4in]{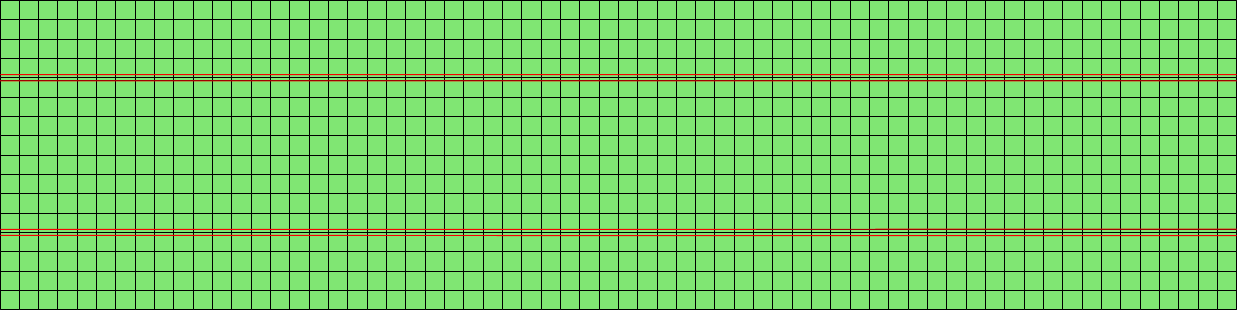}
    \caption{The mesh for Example \ref{subsec:thinlayer} with two layers at mesh size
    $h=1/16$.}
     \label{fig:twolayerspolymesh} 
\end{figure}

\begin{figure}[!h]
    \centering
    \includegraphics[width=4.4in]{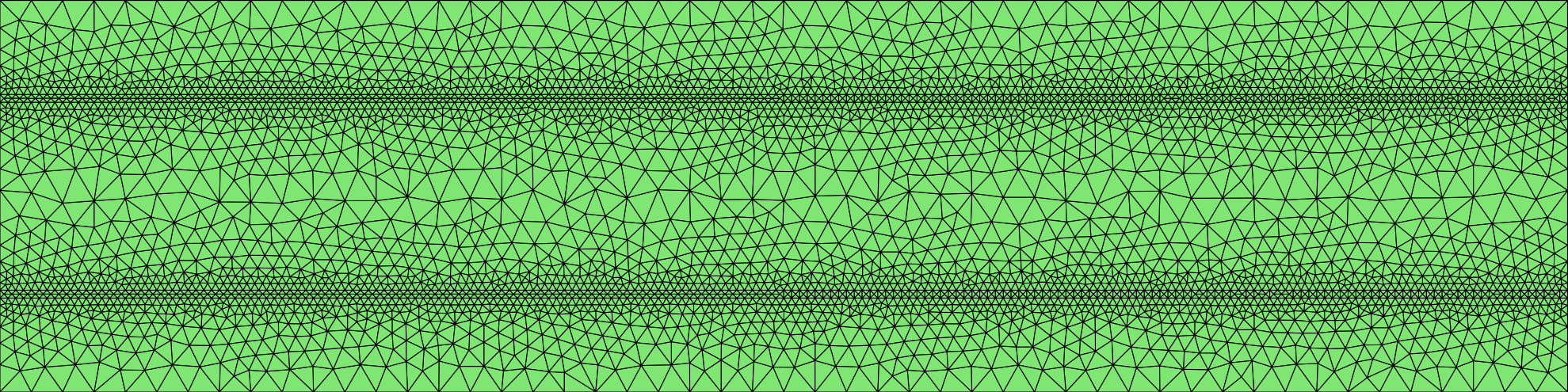}
    \caption{The triangle mesh for Example \ref{subsec:thinlayer} with two
        layers.}
     \label{fig:twolayerstrimesh} 
\end{figure}

\begin{table}[!h]
\centering
\caption{The solution errors of Example \ref{subsec:thinlayer}}
\label{tab:exm4}
\begin{tabular}[c]{|c|c|c|c|c|c|c|}\hline
$k$ & $3$ & $4$ & $5$ & $6$ & $7$
\\\hline
$||\Pi_h\bfu_h^8 - \Pi_h \bfu_h^k||_0$ & 8.7974e-02 & 4.3586e-02 & 1.4656e-02 & 5.6111e-03 & 2.2800e-03
\\\hline
Order & -- & 1.01 & 1.57 & 1.39 & 1.3 
\\\hline
$||\rot \bfu_h^8 - \rot \bfu_h^k||_0$ & 9.3078e-01 & 4.6241e-01 & 1.7258e-01 & 7.4942e-02 & 3.1537e-02
\\\hline
Order & -- & 1.01 & 1.42 & 1.2  & 1.25
\\\hline
\end{tabular}
\end{table}

\begin{figure}[!h]
    \centering
    \includegraphics[width=5.0in]{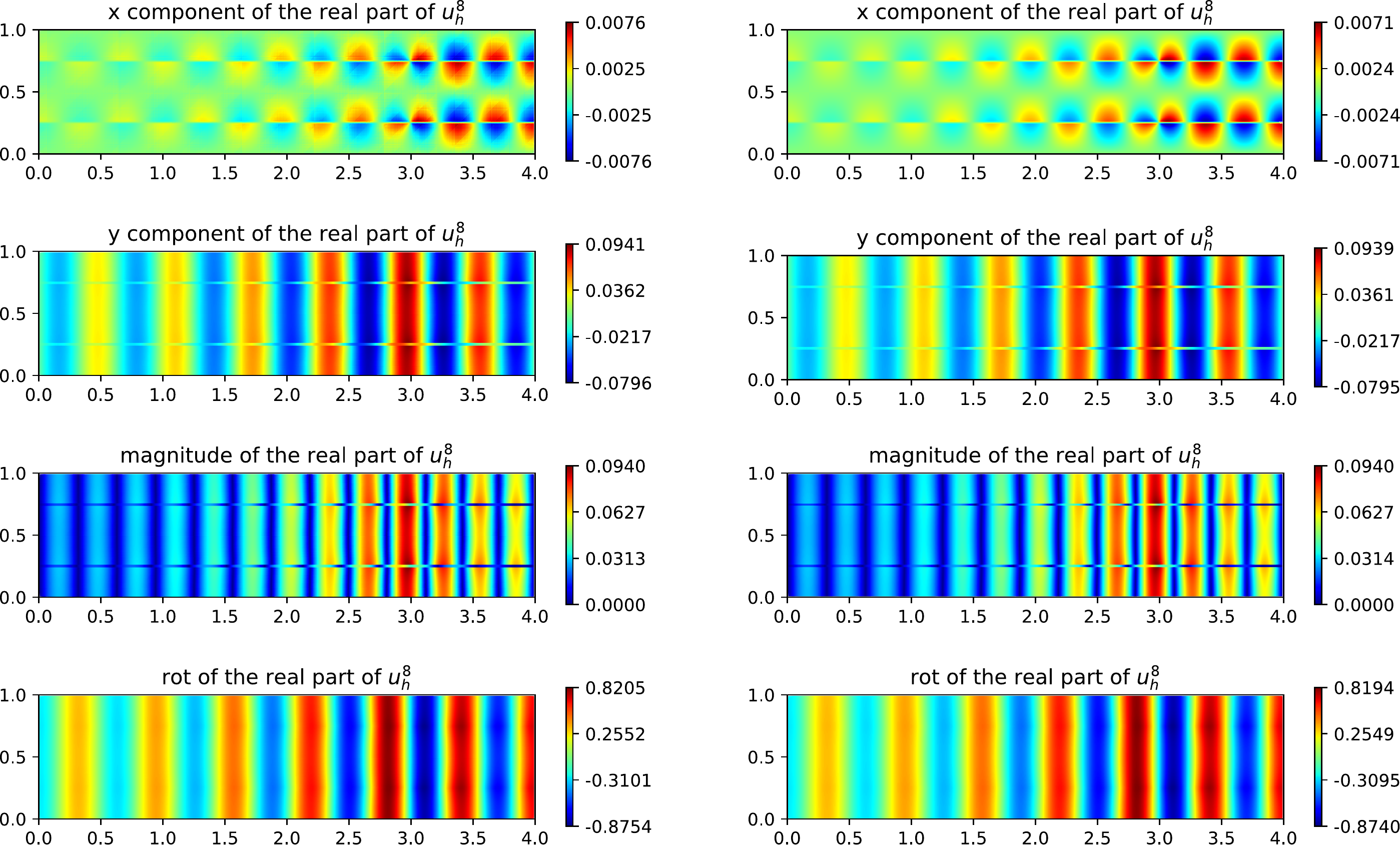}
    \caption{Example \ref{subsec:thinlayer} with two layers: comparison of the FEM solution with the
    first kind N\'ed\'elec element (left) and the VEM solution (right) when the mesh size $h = 1/2^8$.}
    \label{fig:femvsvemdoublelayer} 
\end{figure}

\begin{figure}[!h]
    \centering
    \includegraphics[width=5.0in]{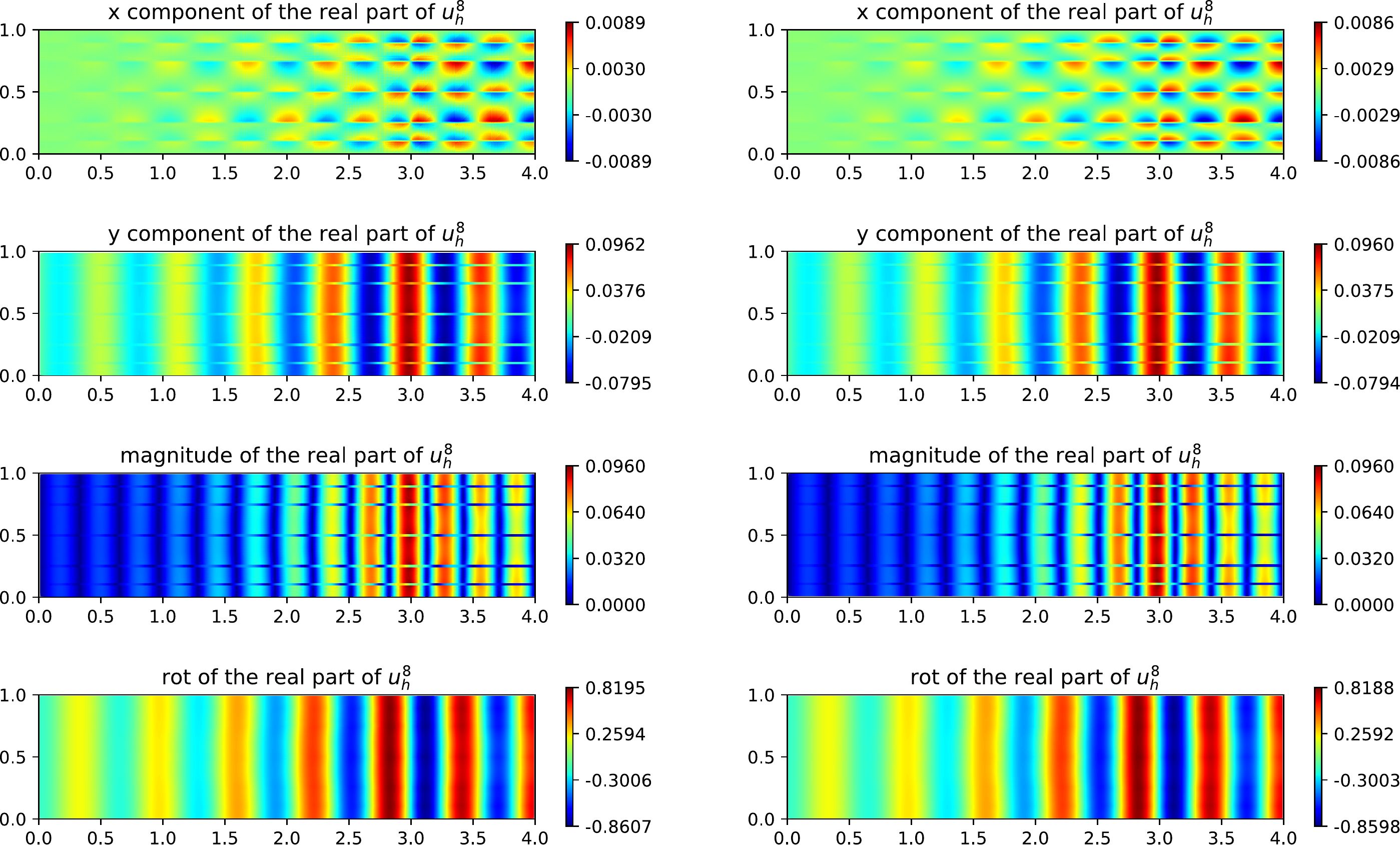}
    \caption{Example \ref{subsec:thinlayer} with five layers: comparison of the FEM solution with the
    first kind N\'ed\'elec element (left) and the VEM solution (right) when the mesh size $h = 1/2^8$.}
     \label{fig:femvsvemfivelayer} 
\end{figure}

 
\appendix

\section{Proof of Lemma \ref{lem_embd_1} for complex $\beta$}
\label{append_embed}
 Denote $\beta = \beta_R + i \beta_I$ and assume $\beta_R > 0$.
 Given $\bfu = \bfu_I + i\bfu_R \in \bfH_0(\rot;\Omega)$, we let $\bfu_I$ and $\bfu_R$ be column vectors. Define a matrix
 \begin{equation}
\label{lem_embd_1_eq1}
\bfB = \frac{1}{\beta^2_R + \beta^2_I} \left[ \begin{array}{cc} \beta_R & \beta_I \\ -\beta_I & \beta_R \end{array}\right] .
\end{equation}
As $\bfB$ is positive definite, we consider a system for a column vector function $\bfphi = [\phi_R,\phi_I]^T$ satisfying
\begin{equation}
\label{lem_embd_1_eq1_1}
\rot(\bfB \, \brot \bfphi ) = - \ddiv\left( \bfB \nabla \bfphi \right) =  \rot( [\bfu_R ,\bfu_I ]^T ), ~~~~ [\bfphi]_{\Gamma} = \mathbf{0}, ~~~~ [ (\bfB \nabla \bfphi)\bfn ]_{\Gamma} = 0, 
\end{equation}
subject to $\nabla\bfphi\cdot\bfn = \mathbf{ 0}$  on $\partial \Omega$,
where $\nabla$ and $\brot$ are understood as a row vector, and $\ddiv$ and $\rot$ are taken on each row. Similarly, we also introduce another vector function $\bfpsi = [\psi_R, \psi_I]^T$ satisfying
\begin{equation}
\label{lem_embd_1_eq1_2}
-\ddiv(\bfB^{-1} \nabla \bfpsi ) =  -\ddiv( \bfB^{-1} \, [\bfu_R, \bfu_I]^T ), ~~~~ [\bfpsi]_{\Gamma} = \mathbf{0}, ~~~~ [ (\bfB^{-1} \nabla \bfpsi)\bfn ]_{\Gamma} = 0,
\end{equation}
subject to $\nabla\bfpsi\cdot\bfn = \mathbf{ 0}$ on $\partial \Omega$.
We claim that the following identity of matrices holds:
\begin{equation}
\label{lem_embd_1_eq2}
[\bfu_R ,\bfu_I ]^T = \bfB \, \brot \bfphi + \nabla \bfpsi.
\end{equation}
To see this, we denote $\bfTheta = [\bfu_R ,\bfu_I ]^T - \bfB \, \brot \bfphi - \nabla \bfpsi$ and note that $\rot( \bfTheta )= \ddiv(\bfB^{-1} \bfTheta ) = \mathbf{ 0}$ according to the equations \eqref{lem_embd_1_eq1_1} and \eqref{lem_embd_1_eq1_2}. Hence, by the exact sequence, there exist a column vector function $\bfvarphi$ such that $\bfTheta = \nabla \bfvarphi$, and thus we conclude the system for $\bfvarphi$:
\begin{equation}
\label{lem_embd_1_eq2_1}
\ddiv(\bfB^{-1} \nabla\bfvarphi ) = \mathbf{ 0}, ~~~ \nabla\bfpsi\cdot\bfn = \mathbf{ 0} ~~  \text{on} ~ \partial \Omega,
\end{equation}
which leads to $\bfTheta =  \nabla\bfvarphi =  \mathbf{ 0}$, based on the positivity of $\bfB^{-1}$. By the elliptic regularity, there holds $\bfphi\in \bfH^{1+\theta}(\Omega^{\pm})$ and $\bfpsi\in \bfH^{1+\theta}(\Omega^{\pm})$ with 
$$
\| \bfphi \|_{1+\theta,\Omega^{\pm}}  \lesssim  \|  \rot( [\bfu_R ,\bfu_I ]^T ) \|_{0,\Omega} ~~~ \text{and} ~~~ \| \bfpsi \|_{1+\theta,\Omega^{\pm}}  \lesssim \| \ddiv( \bfB^{-1} \, [\bfu_R, \bfu_I]^T ) \|_{0,\Omega}.
$$ 
Hence, we have $\bfu_R,~\bfu_I\in \bfH^{\theta}(\Omega^{\pm})$. At last, by introducing $\phi = \phi_R + i \phi_I$ and $\psi = \psi_R + i \psi_I$, we note that \eqref{lem_embd_1_eq2} can be equivalently rewritten into
\begin{equation}
\label{lem_embd_1_eq2_2}
\bfu = \beta^{-1} \brot \phi + \nabla \psi, 
\end{equation}
Therefore, it is not hard to see that $ \| \ddiv( \bfB^{-1} \, [\bfu_R, \bfu_I]^T ) \|_{0,\Omega} = \| \ddiv(\beta \bfu ) \|_{0,\Omega}$ which then yields $\bfH_0(\rot;\Omega)\cap \bfH(\ddiv,\beta;\Omega)\hookrightarrow \bfH^{\theta}(\Omega^{\pm})$. 

 \section{An inequality regarding fractional order Sobolev norm on irregular elements}
 \begin{lemma}
 \label{lem_geometry_patch}
 Let $S$ be a square with an edge length $h$, and let $\widetilde{S}$ its rectangular subregion which has one side overlapping with $S$ and the corresponding supporting height is $l$. 
 Then, for every $u\in H^{s}(S)$, $s>1/2$, there holds
 \begin{equation}
\label{lem_geometry_patch_eq0}
\int_{\widetilde{S}} \int_{\widetilde{S}} (u(\bfa) - u(\bfb) )^2 \dd \bfa \dd \bfb \le h^2 l^{2s}  |u |^2_{s,S}.
\end{equation}
 \end{lemma}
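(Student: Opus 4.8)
The plan is to reduce everything to the trace of $u$ on the side that $S$ and $\widetilde S$ share. Since the asserted inequality is invariant under rigid motions and dilations, I would take $S=(0,h)^2$ and $\widetilde S=(0,h)\times(0,l)$ with shared side $B=(0,h)\times\{0\}$, and dispose of the case $l>h/2$ at once: there $\operatorname{diam}\widetilde S\le\sqrt2\,h$ gives $\int_{\widetilde S}\int_{\widetilde S}(u(\bfa)-u(\bfb))^2\dd\bfa\,\dd\bfb\le(\sqrt2\,h)^{2(1+s)}|u|_{s,\widetilde S}^2\lesssim h^{2+2s}|u|_{s,S}^2\lesssim h^2l^{2s}|u|_{s,S}^2$ because $h\le 2l$. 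So assume $l\le h/2$. As $s>1/2$, $u$ has a trace $u(\cdot,0)\in H^{s-1/2}(0,h)$ on $B$; inserting it and using $(x+y+z)^2\le 3(x^2+y^2+z^2)$, I would split $\int_{\widetilde S}\int_{\widetilde S}(u(\bfa)-u(\bfb))^2\dd\bfa\,\dd\bfb\lesssim \mathrm{I}+\mathrm{II}$, where, the two ``vertical'' pieces coinciding by symmetry,
\[
\mathrm I=hl\!\int_0^h\!\!\int_0^l\!\bigl(u(x_1,a_2)-u(x_1,0)\bigr)^2\dd a_2\,\dd x_1,\qquad \mathrm{II}=l^2\!\int_0^h\!\!\int_0^h\!\bigl(u(a_1,0)-u(b_1,0)\bigr)^2\dd a_1\,\dd b_1 .
\]

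The central estimate I would need is a ``sliced‑seminorm'' bound: with $|w|_{H^s(I)}^2:=\int_I\int_I|w(x)-w(y)|^2|x-y|^{-1-2s}\dd x\,\dd y$ for an interval $I$,
\[
\int_0^h |u(x_1,\cdot)|_{H^s(0,l)}^2\,\dd x_1\ \lesssim\ |u|_{s,S}^2 .
\]
I would prove this by a thickening/averaging argument. For $x_1\in(0,h)$ and $\xi,\eta\in(0,l)$ put $\rho=|\xi-\eta|$ (so $\rho\le l\le h/2$) and $\zeta=\tfrac12(\xi+\eta)$, and let $W$ be $(0,\rho)$ if $x_1\le h/2$ and $(-\rho,0)$ otherwise, so that $x_1+z\in(0,h)$ for every $z\in W$. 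Averaging over $z\in W$ the identity $u(x_1,\xi)-u(x_1,\eta)=[u(x_1,\xi)-u(x_1+z,\zeta)]+[u(x_1+z,\zeta)-u(x_1,\eta)]$, then applying Cauchy--Schwarz and the elementary bounds $\rho/2\le|(x_1,\xi)-(x_1+z,\zeta)|\le\sqrt5\,\rho/2$ (and likewise for the other pair), I would obtain
\[
\frac{(u(x_1,\xi)-u(x_1,\eta))^2}{\rho^{1+2s}}\ \lesssim\ \int_W\frac{(u(x_1,\xi)-u(x_1+z,\zeta))^2}{|(x_1,\xi)-(x_1+z,\zeta)|^{2+2s}}\dd z+\int_W\frac{(u(x_1+z,\zeta)-u(x_1,\eta))^2}{|(x_1+z,\zeta)-(x_1,\eta)|^{2+2s}}\dd z .
\]
Integrating in $(x_1,\xi,\eta)$, summing the $x_1\le h/2$ and $x_1>h/2$ contributions, and then using for each term the change of variables $(x_1,\xi,\eta,z)\mapsto(\bfp,\bfq):=\bigl((x_1,\xi),(x_1+z,\zeta)\bigr)$ — which has constant Jacobian $\tfrac12$ and image inside $S\times S$ — both right‑hand terms become $\lesssim\int_S\int_S|u(\bfp)-u(\bfq)|^2|\bfp-\bfq|^{-2-2s}\dd\bfp\,\dd\bfq=|u|_{s,S}^2$, which gives the sliced bound.

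Granting the sliced bound, I would control $\mathrm I$ as follows: for fixed $x_1$, with $v:=u(x_1,\cdot)$ and $\bar v:=\tfrac1l\int_0^l v$,
\[
\int_0^l(v(a_2)-v(0))^2\,\dd a_2\ \le\ 2\!\int_0^l(v-\bar v)^2\,\dd a_2+2l\,(v(0)-\bar v)^2\ \lesssim\ l^{2s}\,|v|_{H^s(0,l)}^2 ,
\]
using the one‑dimensional fractional Poincar\'e inequality $\int_0^l(v-\bar v)^2\,\dd a_2\le\tfrac12 l^{2s}|v|_{H^s(0,l)}^2$ (from $|x-y|\le l$) and the scale‑invariant one‑dimensional trace bound $(v(0)-\bar v)^2=\bigl((v-\bar v)(0)\bigr)^2\lesssim l^{2s-1}|v|_{H^s(0,l)}^2$ (a dilation of $H^s(0,1)\hookrightarrow C^0$, valid since $s>1/2$). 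Integrating in $x_1$ and invoking the sliced bound gives $\mathrm I\lesssim hl\cdot l^{2s}|u|_{s,S}^2=h\,l^{1+2s}|u|_{s,S}^2\le h^2l^{2s}|u|_{s,S}^2$. For $\mathrm{II}$, the scale‑invariant trace inequality on $S$, $|u(\cdot,0)|_{H^{s-1/2}(0,h)}^2\lesssim|u|_{s,S}^2$, together with $|a_1-b_1|\le h$ and $2(s-\tfrac12)+1=2s$, gives $\mathrm{II}\le l^2h^{2s}|u(\cdot,0)|_{H^{s-1/2}(0,h)}^2\lesssim l^2h^{2s}|u|_{s,S}^2\le h^2l^{2s}|u|_{s,S}^2$, the last step using $l\le h$. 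Adding the bounds for $\mathrm I$ and $\mathrm{II}$ would conclude. The hard part will be the sliced‑seminorm bound and its companion trace inequality on $S$: what makes the argument work is that one compares to the trace on the side \emph{shared} with $S$ rather than to an interior average of $u$, which keeps every vertical separation entering $\mathrm I$ of size at most $l$ — exactly what yields the anisotropic factor $l^{2s}$ instead of the lossy $h^{2s}$ one would get by extending to all of $S$ — and the thickening step must be arranged so that the auxiliary point $(x_1+z,\zeta)$ does not leave $S$, which is the only place the reduction $l\le h/2$ is used.
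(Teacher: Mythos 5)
Your proof is correct, and although it follows the same broad strategy as the paper's (insert an intermediate value so as to split the double integral into a ``vertical'' piece that sees only separations $\le l$ and a ``horizontal'' piece along the long side, then bound each by one-dimensional estimates keyed to the two-dimensional Gagliardo seminorm), the route differs in two substantive ways. First, the paper inserts the interior corner value $u(a_1,b_2)$, which yields directly the two terms $\int(u(a_1,a_2)-u(a_1,b_2))^2$ and $\int(u(a_1,b_2)-u(b_1,b_2))^2$, whereas you insert the trace $u(\cdot,0)$ on the side shared by $S$ and $\widetilde S$; your choice forces an extra triangle-inequality term but makes the anisotropic gain $l^{2s}$ transparent, since the vertical comparisons now literally have length at most $l$. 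Second — and this is the more significant difference — the paper reduces its vertical piece to the bound $\int_0^h |u(a_1,\cdot)|^2_{s-1/2,e_{a_1}}\,\dd a_1 \lesssim h\,|u|^2_{s,S}$ and disposes of it with the words ``the trace inequality to one side of $S$,'' which is not really a trace inequality (it is an integrated slice estimate) and is left unjustified; you instead state and prove the corresponding sliced-seminorm bound $\int_0^h |u(x_1,\cdot)|^2_{H^s(0,l)}\,\dd x_1 \lesssim |u|^2_{s,S}$ via the thickening/averaging argument, from which the paper's version follows (multiply by $|a_2-b_2|\le h$). The price you pay for making the thickening rigorous is the need to keep the auxiliary point $(x_1+z,\zeta)$ inside $S$, which is exactly why you split off the case $l>h/2$ up front; the paper avoids this case distinction only because it does not carry out the thickening step at all. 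Net effect: your argument is somewhat longer but fully self-contained, and it makes explicit the slicing inequality the paper tacitly relies on.

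One small remark for the range of $s$: your argument, like the paper's, uses the one-dimensional Gagliardo seminorm $|w|_{H^s(I)}$ with exponent $1+2s$, which is only meaningful for $s<1$; for $s=1$ (which the paper allows in its hypothesis) one should replace the Gagliardo integral by $\|\nabla u\|_{L^2}$ and the estimates simplify. This is not a gap in your argument but a caveat shared by both proofs.
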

 \begin{proof}
  Let $\bfa=(a_1,a_2)$ and $\bfb=(b_1,b_2)$. 
 Without loss of generality, we construct a coordinate system in which the common edge of $S$ and $\widetilde{S}$ is the $x_1$ axis and the perpendicular edge is the $x_2$ axis. Then, we have
 \begin{equation}
 \begin{split}
\label{lem_geometry_patch_eq1}
\int_{\widetilde{S}} \int_{\widetilde{S}} (u(\bfa) - u(\bfb) )^2 \dd \bfa \dd \bfb & = \int_0^l \int_0^l \int_0^h \int_0^h (u(a_1,a_2) - u(b_1,b_2) )^2  \dd a_1 \dd b_1 \dd a_2 \dd b_2 \\
& \lesssim \underbrace{ \int_0^l \int_0^l \int_0^h \int_0^h (u(a_1,a_2) - u(a_1,b_2) )^2  \dd a_1 \dd b_1  \dd a_2 \dd b_2}_{(I)} \\
& + \underbrace{ \int_0^l \int_0^l \int_0^h \int_0^h (u(a_1, b_2) - u(b_1,b_2) )^2  \dd a_1 \dd b_1 \dd a_2 \dd b_2 }_{(II)}.
\end{split}
\end{equation}
Let $e_{a_1}$ be the edge perpendicular to $e$ with the $x_1$ coordinate $a_1$. We know $u \in H^{s-1/2}(e_{a_1})$, and thus
\begin{equation}
\begin{split}
\label{lem_geometry_patch_eq2}
(I) & = h \int_0^h \int_0^l \int_0^l  (u(a_1,a_2) - u(a_1,b_2) )^2 \dd a_2 \dd b_2  \dd a_1  \\
& \le h l^{2s}  \int_0^h \int_0^l \int_0^l  (u(a_1,a_2) - u(a_1,b_2) )^2/ |a_2-b_2|^{1+2(s-1/2)} \dd a_2 \dd b_2  \dd a_1 \\
& \le h l^{2s} \int_0^h |u(a_1,\cdot)|^2_{s-1/2,e_{a_1}} \dd a_1 \le h^2 l^{2s}  |u |^2_{s,S},
\end{split}
\end{equation}
where we have used the definition of fractional order Sobolev normal and the trace inequality to one side of $S$ which is shape regular.
As for $(II)$, we employ the similar argument:
\begin{equation}
\begin{split}
\label{lem_geometry_patch_eq3}
(II) &= l \int_0^l  \int_0^h \int_0^h (u(a_1, b_2) - u(b_1,b_2) )^2  \dd a_1 \dd b_1  \dd b_2 \\
& \le l h^{2s} \int_0^l  \int_0^h \int_0^h (u(a_1, b_2) - u(b_1,b_2) )^2/ |a_1 - b_2|^{1+2(s-1/2)}  \dd a_1 \dd b_1  \dd b_2 \le l^2 h^{2s}  |u |^2_{s,S}.
\end{split}
\end{equation}
Combining \eqref{lem_geometry_patch_eq1} and \eqref{lem_geometry_patch_eq2} yields the desired result.
 \end{proof}

 

 
 

\bibliographystyle{siamplain}
\bibliography{RuchiBib.bib}

\end{document}